\numberwithin{equation}{section}
\theoremstyle{plain}
\newtheorem*{theorem*}{Theorem}
\newtheorem{theorem}{Theorem}
\numberwithin{theorem}{section}
\newtheorem{proposition}[theorem]{Proposition}
\newtheorem{lemma}[theorem]{Lemma}
\newtheorem{corollary}[theorem]{Corollary}
\newtheorem{remark}[theorem]{Remark}
\theoremstyle{definition}
\newtheorem{definition}[theorem]{Definition}
\newtheorem{example}[theorem]{Example}
\newtheorem{notation}[theorem]{Notation}
\newcommand{\C}{{\mathbb C}}
\newcommand{\Q}{{\mathbb Q}}
\newcommand{\G}{{\mathbb G}}
\newcommand{\K}{{\mathbb K}}
\newcommand{\R}{{\mathbb R}}
\newcommand{\Z}{{\mathbb Z}}
\newcommand{\N}{{\mathbb N}}
\newcommand{\V}{{\mathbb V}}
\newcommand{\PP}{{\mathbb P}}
\def\codim{\operatorname{codim}}
\def\image{\operatorname{im}}
\def\rank{\operatorname{rank}}
\newcommand{\mT}{\mathsmaller{\mathsf{T}}}
\newcommand{\mle}{\mathsmaller{\le}}
\newcommand{\mcp}{\mathsmaller{|P|}}
\newcommand{\sat}{\mathrm{sat}}
\newcommand{\kd}{{\mathcal D}}
\newcommand{\km}{{\mathcal M}}
\newcommand{\kk}{{\mathcal K}}
\newcommand{\kp}{{\mathcal P}}
\newcommand{\kv}{{\mathcal V}}
\newcommand{\ku}{{\mathcal U}}
\renewcommand*\env@matrix[1][*\c@MaxMatrixCols c]{%
    \hskip -\arraycolsep
    \let\@ifnextchar\new@ifnextchar
    \array{#1}}
\author{Flavio Salizzoni}
\author{Luca Sodomaco}
\address{Max Planck Institute for Mathematics in the Sciences, Leipzig, Germany}
\email{flavio.salizzoni@mis.mpg.de}
\email{luca.sodomaco@mis.mpg.de}
\author{Julian Weigert}
\address{Max Planck Institute for Mathematics in the Sciences, Leipzig, Germany}
\address{Mathematisches Institut, Universit\"at Leipzig, Augustusplatz 10, 04109 Leipzig, Germany}
\email{julian.weigert@mis.mpg.de}
\title[Nonlinear Kalman varieties]{Nonlinear Kalman varieties}
\subjclass[2020]{Primary: 15A18; Secondary: 13P25, 14N05, 14N10, 14Q20, 93B25}
\keywords{Eigenvector, Kalman's observability condition, determinantal variety, Kalman variety, vector bundle, Chow ring, singular locus}
\begin{document}

\begin{abstract}
We study the locus of square matrices having at least one eigenvector on a prescribed algebraic variety $X$. When $X$ is a linear subspace, this data locus is known as the {\em Kalman variety} of $X$ and was studied first by Ottaviani and Sturmfels. Motivated by recent applications to quantum chemistry and optimization, in this work, we focus on {\em nonlinear Kalman varieties}, that is, Kalman varieties relative to arbitrary projective varieties $X$. We study the basic invariants of these varieties, such as their dimensions, degrees, and singularities.
Furthermore, Ottaviani and Sturmfels provide determinantal equations in the linear case.
We generalize their result to Kalman varieties of hypersurfaces by providing a determinantal-like description of their equation.
\end{abstract}

\maketitle

\section{Introduction}

Given a square matrix $A\in\K^{n\times n}$ with entries in a field $\K$, one of the most classical problems in linear algebra is the study of its eigenvectors and eigenvalues, that is, the solutions $v\in\K^n\setminus\{0\}$ and $\lambda\in\K$ of the system
\begin{equation}\label{eq: linear algebra}
A\,v = \lambda\,v\,.
\end{equation}
In this work, we study loci of matrices $A$ that admit eigenvectors with additional structure. More precisely, we consider a projective algebraic variety $X\subseteq\PP^{n-1}=\PP(\K^n)$ and we look for all square matrices $A$ such that at least one of its {\em eigenpoints}, which are classes $[v]$ of eigenvectors, sits on $X$. We call these loci {\em nonlinear Kalman varieties} and we denote them by $\kk(X)$, see Definition~\ref{def: Kalman}.

Ottaviani and Sturmfels introduced Kalman varieties in~\cite{ottaviani2013matrices} when $X$ is a linear subspace and studied their irreducibility, degree, and their singular loci. Further aspects of equations of Kalman varieties have been addressed in~\cite{sam2012equations,huang2017equations}. More recently, the notion of Kalman variety has been extended to higher-order tensors in~\cite{ottaviani2022tensors,shahidi2023degrees} by considering singular vector tuples rather than eigenpoints. Restricting to symmetric matrices, the nonlinear Kalman varieties considered here coincide with the Kalman varieties studied in \cite{shahidi2023degrees}, see Remark~\ref{rmk: symmetric Kalman}.

Our work is also motivated by two applications coming from quantum chemistry and quadratic optimization, which we summarize here. First, it is worth mentioning that, despite being a reasonably easy problem to study, computing the solutions of \eqref{eq: linear algebra} may be an intractable problem if the size $n$ of $A$ is huge. This happens, for example, in quantum chemistry. In this case, $A$ is a symmetric matrix of size $n=\binom{k}{\ell}$, called the electronic structure Hamiltonian. It encodes the interaction among $\ell$ electrons, obtained via discretization into $k$ spin-orbitals of the electronic Schr\"odinger equation. In this setting, the vector $v$ represents a quantum state. One may easily observe that, when $k$ increases, the problem size of \eqref{eq: linear algebra} grows exponentially. The main goal of {\em coupled cluster theory} is to accurately approximate the solutions of \eqref{eq: linear algebra} with solutions of a much smaller, nonlinear system, whose equations are called {\em coupled cluster (CC) equations}. Further details on the topic and on the algebraic geometry related to CC theory can be found in the seminal paper \cite{faulstich2025algebraic} (with parameters $(n,d)$ replacing our $(k,\ell)$). In brief, the CC equations are formed by a subset (encoded by a parameter $\sigma\subseteq[\ell]=\{1,\ldots,\ell\}$) of the $2\times 2$ minors of the $n\times 2$ matrix with columns $A\,v$ and $v$, together with the condition that $[v]\in V_\sigma$, where $V_\sigma$ is a so-called {\em truncation variety} (see \cite[Eq. (26)]{faulstich2025algebraic}):
\begin{equation}\label{eq: CC equations}
(A\,v)_\sigma = \lambda\,v_\sigma\,,\quad [v]\in V_\sigma\,.  
\end{equation}
The choice of minors and of $V_\sigma$ ensures a polynomial system \eqref{eq: CC equations} with finitely many solutions $[v]\in\PP^{n-1}$ for a sufficiently generic choice of $A$. The size of the new problem \eqref{eq: CC equations} is considerably smaller than the size of \eqref{eq: linear algebra}, at the price of computing only approximate eigenvectors of $A$ \eqref{eq: linear algebra}, and of obtaining also nonreal solutions in complex-conjugated pairs, a phenomenon that cannot occur in \eqref{eq: linear algebra} if $A$ is chosen real and symmetric, thanks to the Spectral Theorem.

Zooming out from quantum chemistry, one observes that the nonzero solutions $v$ of \eqref{eq: linear algebra} correspond to the critical points of the Rayleigh quotient $\frac{v^\mT Av}{v^\mT v}$. Equivalently, the normalized eigenvectors of $A$ are the critical points of the quadratic homogeneous polynomial $f_A(v)=v^\mT Av$ constrained to the sphere. In several applications, it is natural to study the critical points of $f_A$ constrained to the sphere and to another subset $X$ cut out by polynomial equations. The minimization of the Rayleigh quotient constrained to a linear subspace $X$ is sometimes referred to as the {\em constrained eigenvalue problem}. In general, the variety of constraints may not be linear. For example, the minimization of the Rayleigh quotient with quadratic constraints is studied in \cite{karow2011values,prajapati2022optimizing}. Going back to quantum chemistry, an interesting case is when $X$ is a Grassmannian, a Segre variety of rank-one tensors, or more generally, a tensor network variety. In the recent work \cite{salizzoni2025nonlinear}, we introduced the {\em Rayleigh-Ritz (RR) degree of index $\omega$} of an algebraic variety $X$, as the number of complex critical points over $X$ of a polynomial function of degree $\omega$ with generic coefficients. The equations defining the critical points of a constrained Rayleigh quotient optimization may be called {\em RR equations}, mimicking the CC equations of quantum chemistry. The case $\omega=2$ corresponds precisely to the normalized Rayleigh quotient optimization constrained to $X$. A further detailed study on RR degrees of index $\omega=2$ of tensor train varieties is developed in \cite{borovik2025numerical}.

The RR and CC equations answer two different questions; therefore, they yield two very different invariants, the {\em RR degree} and the {\em CC degree}: On the one hand, the solutions of \eqref{eq: CC equations} are only approximations of \eqref{eq: linear algebra}. On the other hand, the solutions of RR equations with respect to $X$ are called $X$-eigenvectors, and for a generic data matrix $A$, they coincide with the classical eigenvectors only when $X=\PP^{n-1}$. Returning to the main goal of studying nonlinear Kalman varieties, our research is also driven by the following question: {\em For which data matrices $A$ does there exist a solution $v$ of either the CC equations \eqref{eq: CC equations} or the RR equations which is also a solution of \eqref{eq: linear algebra}, that is, $v$ is eigenvector of $A$?}

We now summarize the results of this work. In Section~\ref{sec: definition}, we set up the main notations and definitions used in the paper, and we state the first property of nonlinear Kalman varieties, such as their irreducibility if the fixed variety $X$ is irreducible, their codimension in the space of matrices, and their degree. A first tentative description of the equations of Kalman varieties is done in Example \ref{ex: Kalman variety plane conic} for a plane conic $X$.

In Section~\ref{sec: Kalman matrix}, we recall the construction of the Kalman matrix associated with a linear subspace $L=\PP(\ker(C))$ for some $C\in \R^{(n-m)\times n}$. The definition originates from control theory. In this context, given $A\in \R^{n\times n}$ and $C$ as above, a discrete-time system in state-space form 
\[
    \begin{cases}
        x(k+1) = A\,x(k)\\
        y(k) = Cx(k)
    \end{cases}
\]
is {\em observable} if the initial state $x(0)$ can be determined by the output vectors $y(0),\ldots,y(n-1)$. Such a system is observable if and only if the associated {\em Kalman matrix} $K(C)$ in \eqref{eq: Kalman matrix} has full rank. This result is known as {\em Kalman's Observability Condition}, see~\cite{kalman1960contributions}.
For a fixed $C$, the set of all matrices $A$ for which the corresponding discrete-time system is unobservable is precisely the Kalman variety $\kk(L)$, and it is cut out by all maximal minors of the Kalman matrix $K(C)$, see also \cite[Prop. 1.1]{ottaviani2013matrices}. In particular, Kalman varieties of linear subspaces are {\em determinantal}, i.e., their equations are minors of the Kalman matrix, and similarly for their singular loci, as described in \cite[Thm. 4.5]{ottaviani2013matrices}.
Our goal is to describe the equations that define a nonlinear Kalman variety. In particular, inspired by the linear case, we are interested in understanding whether these equations can be interpreted as maximal minors of a certain matrix.
In Definition~\ref{def: nonlinear Kalman matrix}, for any positive integer $d$, we introduce the {\em Kalman matrix of order $d$} and we denote it by $K_d(C)$. This matrix is essentially obtained by replacing the original matrix $A$ in the classical Kalman matrix $K(C)$ with its $d$th symmetric power, as defined in Definition~\ref{def: symmetric power matrix}. What is more, the matrix $C=C_X$ is formed using the coefficients of the polynomials in a minimal generating set for the ideal of the variety $X$. The locus where all maximal minors of the new Kalman matrix $K_d(C_X)$ vanish contains the nonlinear Kalman variety $\kk(X)$, but in general the containment is strict.

In Section~\ref{sec: Kalman hypersurfaces}, we restrict to the case when $X$ is a hypersurface cut out by a single homogeneous polynomial of degree $d$. In this case, the nonlinear Kalman matrix is square, and its determinant has several factors with multiplicities, including the equation of $\kk(X)$. After several preliminary results, in Theorem~\ref{thm: factors determinant Kalman matrix}, we explicitly describe all factors of the determinant and their multiplicities.

Finally, in Section~\ref{sec: singular loci}, we study the singularities of nonlinear Kalman varieties. Singular loci of nonlinear Kalman varieties are particularly hard to study, as their geometry strongly depends on the singularities of the fixed variety $X$. We provide a first geometric intuition of this fact in Theorem~\ref{thm: decomp sing locus Kalman}. This first result is later used to determine the codimension and the degree of the reduced singular locus of the Kalman variety of a nonsingular hypersurface $X$ of degree $d$, leading to Theorem~\ref{thm: degree singular locus Kalman smooth hypersurface}. The main idea behind our result is to degenerate $X$ to a union of hyperplanes $X'=H_1\cup\cdots\cup H_d$, study the reduced singular locus of $\kk(X')$ instead, and finally track back the contribution in the degree of the latter singular locus that does not depend on the singularities of $X'$.  

Supplementary \verb|Macaulay2| \cite{grayson1997macaulay2} software used in the computational examples of this paper can be found at the Zenodo repository \cite{salizzoni2025supplementary}.

\section{Definition and first properties}\label{sec: definition}

We always work over the field of complex numbers $\C$, but we stress that most of our results can be extended to an arbitrary algebraically closed field $\K$ of characteristic zero. More precisely, the property of being of characteristic zero is used in all degree computations of Section \ref{sec: Kalman hypersurfaces}. Furthermore, working over $\C $ is essential in the proof of Lemma~\ref{lem: discriminant_factors_with_multi}.
We start by fixing the notations used throughout the paper.

\begin{notation}\label{notations}
For an integer $n\ge 1$, we use the shorthand $[n]$ for the set $\{1,\ldots,n\}$.
If not otherwise specified, we consider a vector $v=(v_1,\ldots,v_n)^\mT\in\C^n$ as a column vector. We usually denote by $x=[v]=[v_1:\cdots :v_n]$ a point in $\PP^{n-1}=\PP(\C^n)$ for some $v\in\C^n\setminus\{0\}$.
We denote by $\C^{n\times n}$ the space of $n\times n$ matrices $A=(a_{ij})$ with entries $a_{ij}\in\C$, and with $\PP^{n^2-1}=\PP(\C^{n\times n})$ its projectivization.
We adopt the shorthand $\C[x]\coloneqq\C[x_1,\ldots,x_n]$ for the ring of polynomials in $x_1,\ldots,x_n$ with complex coefficients, and similarly $\C[A]\coloneqq\C[a_{ij}\mid 1\le i,j\le n]$. Furthermore, we write $\C[x]_d\coloneqq\C[x_1,\ldots,x_n]_d$ for any integer $d\ge 1$ to denote the vector space of homogeneous polynomials of degree $d$ in $x_1,\ldots,x_n$. In particular, its dimension is $N\coloneqq\binom{n-1+d}{d}$.
For any integer $d\ge 1$,  we define the Veronese embedding $\nu_d\colon\PP^{n-1}\hookrightarrow\PP(\C[x]_d)$ sending the class $[v]=[v_1:\cdots:v_n]\in\PP^{n-1}$ to the class $[(v^*)^d]\in\PP(\C[x]_d)$, where $v^*=v_1x_1+\cdots+v_nx_n\in\C[x]_1$ is the linear form with coefficient vector $v$. We also denote by $\nu_d(v)\in\C[x]_d\cong\C^N$ the column vector of coordinates of $(v^*)^d$ with respect to a fixed monomial order $\prec$.
Finally, we denote by $X$ a (reduced but not necessarily irreducible) projective variety in $\PP^{n-1}$, and we set $m$ to be the dimension of the affine cone in $\C^n$ over $X$, or equivalently $m-1=\dim X$. We always assume that $X\subsetneq\PP^{n-1}$.
\end{notation}

Throughout the paper, we say that $x\in\PP^{n-1}$ is an {\em eigenpoint of $A\in\C^{n\times n}$} if $x=[v]$ for some eigenvector $v$ of $A$.

\begin{definition}\label{def: Kalman}
Let $X\subseteq\PP^{n-1}$ be an algebraic variety of projective dimension $m-1$. We define the {\em (nonlinear) Kalman variety of $X$} as
\[
    \kk(X) \coloneqq \{[A]\in\PP^{n^2-1}\mid\text{$\exists\,x\in X$ such that $x$ is an eigenpoint of $A$}\}\,.
\]
\end{definition}

The dimension and degree of the Kalman variety of an irreducible variety $X$ can be easily expressed in terms of the dimension and degree of $X$. The following result is an almost immediate generalization of \cite[Prop. 1.2]{ottaviani2013matrices}.

\begin{proposition}\label{prop: dim deg kalman}
Let $X\subseteq\PP^{n-1}$ be an irreducible algebraic variety of dimension $m-1$. 
The Kalman variety $\kk(X)$ is irreducible in $\PP^{n^2-1}$ of codimension $n-m$ and degree $\deg X\cdot\binom{n}{m-1}$.
\end{proposition}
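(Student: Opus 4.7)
The plan is to establish all three claims simultaneously by studying the incidence variety
\[
\mathcal{I} \coloneqq \{([A],[v]) \in \PP^{n^2-1} \times X : Av \in \C v\}
\]
together with its two projections $\pi_\PP \colon \mathcal{I} \to \PP^{n^2-1}$ (whose image is $\kk(X)$ by definition) and $\pi_X \colon \mathcal{I} \to X$. The strategy is classical: use $\pi_X$ to pin down the geometry of $\mathcal{I}$, and then transport the information back via $\pi_\PP$ once it is known to be birational.

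First I would observe that $\pi_X$ exhibits $\mathcal{I}$ as a projective subbundle of $\PP^{n^2-1} \times X$ over $X$. Over each $[v] \in X$, the condition $Av \in \C v$ cuts out a linear subspace $\PP^{n^2-n} \subseteq \PP^{n^2-1}$, so $\mathcal{I} = \PP(E)$, where $E \subseteq \mathcal{O}_X \otimes \C^{n \times n}$ is the rank-$(n^2-n+1)$ kernel of the surjection $\mathcal{O}_X \otimes \C^{n \times n} \twoheadrightarrow \mathcal{O}_X(1) \otimes \mathcal{Q}$, with $\mathcal{Q}$ the tautological quotient bundle of $\PP^{n-1}$ restricted to $X$. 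Since $X$ is irreducible, so is $\mathcal{I}$, and $\dim \mathcal{I} = (m-1) + (n^2-n) = n^2 + m - n - 1$.

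Next I would check that $\pi_\PP$ is birational onto $\kk(X)$ by estimating the locus of matrices with two distinct marked eigenpoints in $X$:
\[
\mathcal{I}^{(2)} \coloneqq \{([A],[v_1],[v_2]) \in \PP^{n^2-1} \times X \times X : [v_1] \neq [v_2],\ Av_i \in \C v_i\}.
\]
The fibers of $\mathcal{I}^{(2)} \to (X \times X) \setminus \Delta$ are cut out by $2(n-1)$ independent linear conditions on $A$, so $\dim \mathcal{I}^{(2)} \leq 2(m-1) + (n^2-1) - 2(n-1) = n^2 + 2m - 2n - 1$. Because $X \subsetneq \PP^{n-1}$ forces $m \leq n - 1$, this is strictly less than $\dim \mathcal{I} = n^2 + m - n - 1$, hence the general $[A] \in \kk(X)$ has a unique eigenpoint in $X$. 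Combined with the previous step, this proves that $\kk(X)$ is irreducible of codimension $n - m$.

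For the degree, I would realize $\mathcal{I} \subset \PP^{n^2-1} \times X$ as the zero scheme of a regular section of the rank-$(n-1)$ bundle $\mathcal{O}_{\PP^{n^2-1}}(1) \boxtimes (\mathcal{O}_X(1) \otimes \mathcal{Q})$, given tautologically by $([A],[v]) \mapsto Av \bmod \C v$. The Euler sequence identifies $\mathcal{O}_{\PP^{n-1}}(1) \otimes \mathcal{Q}$ with $T_{\PP^{n-1}}$, so $c_k(\mathcal{O}_X(1) \otimes \mathcal{Q}) = \binom{n}{k}\zeta^k$ with $\zeta$ the hyperplane class on $X$, and the standard formula for the top Chern class of a line-bundle twist yields
\[
[\mathcal{I}] \,=\, \sum_{i=0}^{n-1} \binom{n}{i} H^{n-1-i}\zeta^i \,\in\, A^*(\PP^{n^2-1} \times X).
\]
Birationality of $\pi_\PP$ then gives
\[
\deg \kk(X) \,=\, \int_{\PP^{n^2-1} \times X}[\mathcal{I}] \cdot H^{n^2+m-n-1} \,=\, \binom{n}{m-1}\deg X,
\]
since only the term $i = m-1$ survives the Künneth decomposition. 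The principal technical point is the dimension bound on $\mathcal{I}^{(2)}$, where the strict inequality $m < n$ is essential; the Chern-class computation is otherwise routine.
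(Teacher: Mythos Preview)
Your argument is correct and follows the same incidence-variety strategy as the paper: both set up $\Sigma(X)=\mathcal{I}$, use the projection to $X$ to see it as a $\PP^{n^2-n}$-bundle (hence irreducible of the right dimension), and then push forward along the projection to $\PP^{n^2-1}$. The one substantive difference is in the birationality step: the paper exhibits an explicit matrix $A=VDV^{-1}$ with a single eigenpoint on $X$ (by choosing one eigenvector in $X\setminus H$ and the rest in $H\setminus X$ for a suitable hyperplane $H$), whereas you deduce generic injectivity from the dimension estimate $\dim\mathcal{I}^{(2)}<\dim\mathcal{I}$, which is equally valid and slightly more robust (it requires no case-by-case geometry of $X$). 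Your Chern-class computation for the degree is exactly the argument the paper invokes by reference to \cite[Prop.~1.2]{ottaviani2013matrices}, and it also anticipates the class $[\widetilde{W}_1]=\sum_j\binom{n}{j}h_0^{n-1-j}h_1^j$ that reappears in Section~\ref{sec: Kalman hypersurfaces}.
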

\begin{proof}
Consider the incidence variety
\begin{equation}\label{eq: incidence variety Sigma}
    \Sigma(X) \coloneqq \{([A],x)\in\PP^{n^2-1}\times\PP^{n-1}\mid\text{$x\in X$ is an eigenpoint of $A$}\}\,.
\end{equation}
Let $\pi_2\colon\Sigma(X)\to\PP^{n-1}$ be the morphism induced by the projection onto the second factor. Then $\pi_2(\Sigma(X))=X$ and for every $x\in X$, the subset of points $[A]\in\PP^{n^2-1}$ such that $x$ is an eigenpoint of $A$ is linear of dimension $n^2-1-(n-1)=n^2-n$. By the theorem on the dimension of fibres, we conclude that $\Sigma(X)$ is irreducible of dimension $n^2-1-(n-m)$. Now consider the morphism $\pi_1\colon\Sigma(X)\to\PP^{n^2-1}$ induced by the projection onto the first factor. Then $\pi_1(\Sigma(X))=\kk(X)$, in particular $\kk(X)$ is irreducible. Furthermore, a generic $[A]\in\kk(X)$ admits precisely one eigenpoint $x$ lies on $X$:
Indeed, since $X\subsetneq\PP^{n-1}$, there exists a hyperplane $H\subseteq\PP^{n-1}$ such that $X\nsubseteq H$. One can choose $n$ points $x_1=[v_1]\in X\setminus H$ and $x_i=[v_i]\in H\setminus X$ for all $i\in\{2,\ldots,n\}$ such that $H=\PP(\langle v_2,\ldots,v_n\rangle)$. If $V$ denotes the $n\times n$ matrix whose columns are $v_1,\ldots,v_n$ and $D$ is a diagonal matrix with generic diagonal entries, then $x_1$ is the unique eigenpoint of $A=VDV^{-1}\in\C^{n\times n}$ on $X$.
As the condition of having two or more eigenvectors on $X$ is closed, this is the generic behavior.
This implies that the restriction of $\pi_1$ to the image $\kk(X)$ is a birational morphism, in particular it is finite-to-one, hence $\dim\kk(X)=\dim\Sigma(X)=n^2-1-(n-m)$, or equivalently $\codim\kk(X)=n-m$. The computation of $\deg\kk(X)$ is done similarly to that in the proof of \cite[Prop. 1.2]{ottaviani2013matrices}.
\end{proof}

\begin{remark}\label{rmk: many components}
In the previous result, $X$ may be replaced by a reduced but not necessarily irreducible variety for the cost of giving up irreducibility of $K(X)$: if $X=X_1\cup\cdots\cup X_k$ is an irreducible decomposition of $X$, then $\kk(X)=\kk(X_1)\cup\cdots\cup\kk(X_k)$. The statements about dimension and degree remain true.\end{remark}

To compute the ideal of $\kk(X)$, first consider the $n\times 2$ matrix
\begin{equation}\label{eq: matrix M}
M = 
\begin{pmatrix}[c|c]
A\,x & x    
\end{pmatrix}
=
\begin{pmatrix}[c|c]
a_{11}x_1+\cdots+a_{1n}x_n & x_1 \\
\vdots & \vdots \\
a_{n1}x_1+\cdots+a_{nn}x_n & x_n 
\end{pmatrix}\,.
\end{equation}
Let $I(X)$ denote the radical ideal of $X$. The ideal of the incidence variety $\Sigma(X)$ in \eqref{eq: incidence variety Sigma} is
\begin{equation}\label{eq: ideal I}
I \coloneqq \left[I(X)+\left(\text{$2\times 2$ minors of $M$}\right)\right]\colon (x_1,\ldots,x_n)^\infty \subseteq\C[x]\otimes\C[A]\,,
\end{equation}
and the ideal of $\kk(X)$ is the intersection $I\cap\C[A]$, corresponding to the elimination of the variables $x_1,\ldots,x_n$ from $I$. 

\begin{remark}\label{rmk: saturation}
We show that the saturation in \eqref{eq: ideal I} with respect to the ideal $(x_1,\ldots,x_n)$ is necessary, while the saturation with respect to $(a_{ij}\mid 1\le i,j\le n)$ is not. 
Let $I'\coloneqq I(X)+(\text{$2\times 2$ minors of $M$})$ and define $\Sigma_{\mathrm{aff}}'(X)$ and $\Sigma_{\mathrm{aff}}(X)$ as the zero loci in $\C^n\times\C^{n\times n}$ of $I'$ and $I$, respectively.
On the one hand $\{0\}\times\C^{n\times n}$ is a component of $\Sigma_{\mathrm{aff}}(X)$, unless $X=\PP^{n-1}$, and $\Sigma_{\mathrm{aff}}(X)=\overline{\Sigma_{\mathrm{aff}}'(X)\setminus(\{0\}\times\C^{n\times n})}\subsetneq\Sigma_{\mathrm{aff}}'(X)$, where the closure is taken in the Zariski topology. On the other hand, for every pair $(v,0)\in \Sigma_{\mathrm{aff}}(X)$ we have $(v,0)=\lim_{n\to\infty}(v,\frac{1}{n}A)$ for some $A\neq 0$ having $v$ as eigenvector. This shows that $\Sigma_{\mathrm{aff}}(X)=\overline{\Sigma_{\mathrm{aff}}(X)\setminus(\C^n\times\{0\})}$ in the Euclidean topology (and also in the Zariski topology because $\Sigma_{\mathrm{aff}}(X)\setminus(\C^n\times\{0\})$ is a nonempty open dense subset in both topologies).
\end{remark}

Alternative methods to compute the equations of $\kk(X)$ depend on the specific variety $X$ considered. For example, one may apply the theory of resultants to compute the equation of the Kalman variety of a quadric hypersurface, as we do in Example \ref{ex: Kalman variety plane conic}. In particular, we apply a method developed by Salmon, which we summarize in the following theorem.

\begin{theorem}{\cite[Lsn. X]{salmon1876lessons}}\label{thm: Salmon}
Let $f_1,\ldots,f_n$ be $n$ homogeneous polynomials in $n$ variables. Then any solution $x\in\PP^{n-1}$ of the system $\{f_1(x)=\cdots=f_n(x)=0\}$ is also a solution of the Jacobian polynomial
\begin{equation}\label{eq: jacobian polynomial}
J(x) \coloneqq \det
\begin{pmatrix}
    \frac{\partial f_1}{\partial x_1}(x) & \cdots & \frac{\partial f_1}{\partial x_n}(x) \\
    \vdots & & \vdots \\
    \frac{\partial f_n}{\partial x_1}(x) & \cdots & \frac{\partial f_n}{\partial x_n}(x) 
\end{pmatrix}\,.
\end{equation}
Moreover, if the polynomials $f_i$ all have the same degree, then any solution $x\in\PP^{n-1}$ of the system $\{f_1(x)=\cdots=f_n(x)=0\}$ is also a solution of all the polynomials $\frac{\partial J}{\partial x_i}(x)$, $i\in[n]$.
\end{theorem}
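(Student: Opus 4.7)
The plan is to base both parts on Euler's homogeneity relation
\[
\sum_{j=1}^n x_j\,\frac{\partial f_i}{\partial x_j} \;=\; d_i\, f_i,
\]
where $d_i=\deg f_i$. Written in matrix form with $M(x)=\bigl(\partial f_i/\partial x_j\bigr)_{i,j}$, this reads
\[
M(x)\,x \;=\; (d_1 f_1(x),\ldots,d_n f_n(x))^\mT.
\]
For the first assertion, at any common zero $x\in\PP^{n-1}$ of the $f_i$'s one has $M(x)\,x=0$, and since $x\ne 0$ in $\C^n$, the matrix $M(x)$ is singular and $J(x)=\det M(x)=0$.

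For the second assertion, my plan is to exploit the adjugate $N(x)$ of $M(x)$, which satisfies the polynomial identity $N\,M=M\,N=J\cdot I_n$. Multiplying the Euler relation on the left by $N$ gives
\[
J\cdot x_j \;=\; \sum_{i=1}^n d_i\, N_{ji}\, f_i \qquad \text{for every } j\in[n],
\]
so each $J\,x_j$ lies in the ideal $(f_1,\ldots,f_n)\subset\C[x]$. Differentiating with respect to $x_k$ yields
\[
\frac{\partial J}{\partial x_k}\,x_j + J\,\delta_{jk} \;=\; \sum_{i=1}^n d_i\!\left(\frac{\partial N_{ji}}{\partial x_k}\, f_i + N_{ji}\,\frac{\partial f_i}{\partial x_k}\right).
\]
At a common zero $x$ of $f_1,\ldots,f_n$, every term containing an $f_i$ factor vanishes, and $J(x)=0$ by the first part, leaving
\[
\frac{\partial J}{\partial x_k}(x)\,x_j \;=\; \sum_{i=1}^n d_i\, N_{ji}(x)\,\frac{\partial f_i}{\partial x_k}(x).
\]

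This is exactly where the equal-degree hypothesis becomes essential: if all $d_i$ equal a common value $d$, the right-hand side collapses to $d\,(NM)_{jk}(x)=d\,J(x)\,\delta_{jk}=0$. Since $x\ne 0$, at least one coordinate $x_j$ is nonzero, forcing $\partial J/\partial x_k(x)=0$ for every $k\in[n]$. The main conceptual step, I expect, is recognizing the ``adjugate trick'' that converts the vanishing of $J$ on the common locus into the stronger ideal-membership $J\,x_j\in(f_1,\ldots,f_n)$; once this is in hand, the derivative computation and the role of the equal-degree assumption reduce to careful bookkeeping. Were the $d_i$ allowed to differ, the sum $\sum_i d_i\, N_{ji}\,\partial f_i/\partial x_k$ would read as $(N\cdot\mathrm{diag}(d_1,\ldots,d_n)\cdot M)_{jk}$, which in general does not simplify to a multiple of $J\cdot\delta_{jk}$, making it transparent why the hypothesis is needed.
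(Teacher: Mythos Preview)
Your argument is correct. Both parts are handled cleanly: Euler's relation gives $M(x)\,x=0$ at a common zero, forcing $J(x)=0$; and the adjugate identity $NM=J\cdot I_n$ together with differentiation of $J\,x_j=\sum_i d_i N_{ji} f_i$ yields the second claim, with the equal-degree hypothesis used exactly where it is needed to collapse $\sum_i d_i N_{ji}\,\partial f_i/\partial x_k$ to $d\,(NM)_{jk}$.

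As for comparison with the paper: the paper does not supply its own proof of this theorem. It is stated as a cited classical result (attributed to Salmon, Lesson~X) and used as a black box in Example~\ref{ex: Kalman variety plane conic}. Your proof is in fact the standard one and is essentially what one finds in Salmon; the Euler relation is the natural tool, and the adjugate trick is the classical way to promote vanishing of $J$ on the common locus to the ideal membership $J\,x_j\in(f_1,\ldots,f_n)$ that drives the derivative statement.
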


\begin{example}\label{ex: Kalman variety plane conic}
Consider a $3\times 3$ matrix $A=(a_{ij})$ and the projective conic $Q\subseteq\PP^2$ of equation
\begin{equation}\label{eq: plane conic with parameters}
f(x) = b_{200}x_1^2+b_{110}x_1x_2+b_{101}x_1x_3+b_{020}x_2^2+b_{011}x_2x_3+b_{002}x_3^2 = 0\,,
\end{equation}
where the $b_{ijk}$'s are additional parameters. Applying Proposition~\ref{prop: dim deg kalman}, the Kalman variety $\kk(Q)$ is a hypersurface of degree $6$ in $\PP(\C^{3\times 3})\cong\PP^8$, namely it is defined by a homogeneous polynomial of degree $6$ in the variables $a_{ij}$. 

Alternatively, for this example, we compute this polynomial via resultants.
Consider the matrix $M$ in \eqref{eq: matrix M} for $n=3$, and let $f_2,f_3$ be the minors of $M$ obtained by selecting the rows $(1,2)$ and $(1,3)$ of $M$, respectively, while $f_3\coloneqq f$. The system $\{f_1(x)=f_2(x)=f_3(x)=0\}$ admits a solution in $\PP^2$ if and only if the {\em resultant} $\mathrm{Res}(f_1,f_2,f_3)$ vanishes \cite[\S13.1]{gelfand1994discriminants}. Note that $f_1,f_2,f_3$ are three quadratic polynomials in $x$, so we can apply both parts of Theorem~\ref{thm: Salmon}. In this case, the Jacobian polynomial $J(x)$ in \eqref{eq: jacobian polynomial} is homogeneous of degree three in $x_1,x_2,x_3$. Let $B$ be the $6\times 6$ coefficient matrix of the vector of polynomials $(\frac{\partial J}{\partial x_1},\frac{\partial J}{\partial x_2},\frac{\partial J}{\partial x_3},f_1,f_2,f_3)$ with respect to the basis of monomials of degree $2$ in $x_1,x_2,x_3$. The determinant of $B$ is equal to $\mathrm{Res}(f_1,f_2,f_3)$, up to a scalar factor. We verified that
\[
\mathrm{Res}(f_1,f_2,f_3) = g_1\,g_2\,,
\]
where $g_1 = b_{002}a_{12}^2-b_{011}a_{12}a_{13}+b_{020}a_{13}^2$ and $g_2$ is a homogeneous polynomial in the entries of $A$ and in the coefficients of $f$ of bi-degree $(6,3)$, with 2832 terms. The polynomial $g_2$ is the equation of the nonlinear Kalman variety $\kk(Q)$. Specializing to the conic of equation $f=x_2^2-x_1x_3=0$, we get $g_1=a_{13}^2$ and $g_2$ is a polynomial with $138$ terms. For the code used, see the script \verb|Kalman_conic_Salmon.m2| available at \cite{salizzoni2025supplementary}.\hfill$\diamondsuit$
\end{example}

\begin{remark}\label{rmk: symmetric Kalman}
One might restrict Definition~\ref{def: Kalman} only to symmetric matrices, and call {\em symmetric Kalman variety} the locus $\kk_{\mathrm{sym}}(X)$ of classes of symmetric matrices $A$ having at least one eigenpoint on $X$.
Symmetric Kalman varieties are instances of {\em (projective) conditional data loci}, which are discussed in \cite{dirocco2024Relative}, see also \cite{horobet2022data}. In general, one considers an affine variety $Y\subseteq\R^n$ equipped with a positive-definite quadratic form, and a subvariety $Z\subseteq Y$. The goal is to compute the (complex) critical points of the distance function from a given data point $u\subseteq\R^n$, restricted to $Y$. A generic $u$ yields finitely many complex critical points, whose number is the {\em distance degree of $Y$} with respect to the quadratic form chosen (see \cite{draisma2016euclidean}). In this generality, none of the critical points sits on the subvariety $Z$. The conditional data locus of $Y$ given $Z$ is the subvariety $\mathrm{DL}_{Y|Z}$ of data points such that at least one of such critical points belongs to $Z$, see \cite[Dfn. 5.6]{dirocco2024Relative}. Translated into our setting, the ambient space is the space of real symmetric matrices $\mathrm{Sym}^2\R^n\subseteq\R^{n\times n}$ equipped with the Bombieri-Weyl inner product, and $Y$ is the affine cone over the Veronese variety $\nu_2(\PP^{n-1})$, that is the cone of symmetric matrices of rank one. It is classically known (as recalled in \cite[Prop. 2.6]{salizzoni2025nonlinear} for symmetric tensors of arbitrary degree) that the normalized real eigenvectors of a real symmetric matrix $A\in\mathrm{Sym}^2\R^n\subseteq\R^{n\times n}$ are in one-to-one correspondence with the rank-one symmetric matrices which are critical for the Bombieri-Weyl distance function from $A$ restricted to the affine cone of rank-one symmetric matrices. Finally, one considers $Z$ to be the affine cone over $\nu_2(X)$ for some variety $X\subseteq\PP^{n-1}$. With the notation used in \cite{dirocco2024Relative}, we then have
\[
\kk_{\mathrm{sym}}(X) = \mathrm{DL}_{\nu_2(\PP^{n-1})|\nu_2(X)}\,,
\]
where the right-hand side is intended as a projective variety.
Therefore, the membership problem $[A]\in\kk_{\mathrm{sym}}(X)$ can be framed in the context of polynomial optimization, in particular in metric algebraic geometry \cite{breiding2024metric}. A similar metric description of Kalman varieties can also be done for partially symmetric tensors and singular vector tuples. For more details on this subject, see \cite[Rmk. 7.2]{dirocco2024Relative} and \cite{shahidi2023degrees}. In particular, the degree of $\kk_{\mathrm{sym}}(X)$ is a special case of \cite[Thm. 1]{shahidi2023degrees} (with $k=1$ and $\omega=2$ in the reference), see also \cite[Rmk. 23]{shahidi2023degrees}.

For example, one may compute the equation of $\kk_{\mathrm{sym}}(Q)\subseteq\PP(\mathrm{Sym}^2\C^3)$ for the conic $Q$ of Example \ref{ex: Kalman variety plane conic}. In that case $\kk_{\mathrm{sym}}(Q)$ is also a hypersurface of degree six cut out by a polynomial with 99 terms in the variables of $A\in\C^{3\times 3}$.
\end{remark}

\section{Kalman matrix for nonlinear Kalman varieties}\label{sec: Kalman matrix}

In this section, we introduce the notion of Kalman matrix for nonlinear Kalman varieties. The underlying idea is to trace the problem back to the linear case using the Veronese embedding $\nu_d$ introduced in Notation \ref{notations}. We start by recalling the construction of the Kalman matrix in the linear case.

Let $L\subseteq\PP^{n-1}$ be a linear subspace of projective dimension $m-1$. It can be described as $L=\PP(\ker(C))$ for some full rank matrix $C\in\C^{(n-m)\times n}$. As stated in \cite[Prop. 1.1]{ottaviani2013matrices}, the Kalman variety $\kk(L)$ is {\em determinantal}, in particular, it coincides with the variety cut out by the $n\times n$ minors of the {\em Kalman matrix}
\begin{equation}\label{eq: Kalman matrix}
    K(C) \coloneqq 
    \begin{pmatrix}
        C\\
        CA\\
        CA^2\\
        \vdots\\
        CA^m
    \end{pmatrix}\in\C[A]^{(n-m)(m+1)\times n}\,.
\end{equation}
One might wonder if a similar determinantal description holds for nonlinear Kalman varieties. The intuition comes from the case of a hypersurface $X=\V(f)\subseteq\PP^{n-1}$ for a given $f\in\C[x]_d$. Indeed, the condition $f=0$ corresponds to a hyperplane section of the Veronese variety $\nu_d(\PP^{n-1})\subseteq\PP(\C[x]_d)$. This suggests considering the Kalman matrix of the hyperplane $C_f\subseteq\PP(\C[x]_d)$ associated to $f$, but replacing $A$ with a larger matrix which we introduce below.

\begin{definition}\label{def: symmetric power matrix}
Let $n$ and $d$ be positive integers. Denote by $\rho_d\colon\mathrm{GL}(\C^n)\to\mathrm{GL}(\C[x]_d)$ the $d$th symmetric power representation of $\mathrm{GL}(\C^n)$, defined by $\rho_d(g)(f(x))=f(g^{-1}(x))$ for all $f\in\C[x]_d\cong\C^N$ and $g\in \mathrm{GL}(\C^n)$. This is a polynomial representation, and so the map $\rho_d$ can be extended to all the matrices in $\C^{n\times n}$ (see \cite[Ch. 4]{sturmfels1993algorithms}). For any $A\in\C^{n\times n}$, we define the {\em $d$th symmetric power of $A$} as the matrix $\rho_d(A)\in\C^{N\times N}$.
\end{definition}

\begin{example}\label{ex: symmetric power matrix}
Consider the matrix $A=(a_{ij})\in\C^{3\times 3}$. To compute its symmetric power $\rho_2(A)\in\C^{6\times 6}$, we apply the representation $\rho_2$ of Definition~\ref{def: symmetric power matrix} to the basis of monomials $\{x_{1}^{2},\:x_{1}x_{2},\:x_{1}x_{3},\:x_{2}^{2},\:x_{2}x_{3},\:x_{3}^{2}\}$ of $\C[x]_2$. For example, if we choose the first basis element $x_1^2$, we need to write the vector of coefficients in the form
\[
\resizebox{\textwidth}{!}{
$\begin{aligned}
(A\,x)_1^2 = (a_{11}x_1+a_{12}x_2+a_{13}x_3)^2 = a_{11}^{2}x_{1}^{2}+2\,a_{11}a_{12}x_{1}x_{2}+2\,a_{11}a_{13}x_{1}x_{3}+a_{12}^{2}x_{2}^{2}+2\,a_{12}a_{13}x_{2}x_{3}+a_{13}^{2}x_{3}^{2}\,.
\end{aligned}$}
\]
Repeating this procedure for all basis elements of $\C[x]_2$, we obtain that
\begin{equation}\label{eq: symmetric power 3x3 matrix}
\resizebox{0.92\textwidth}{!}{
$\begin{aligned}
\rho_2(A) =
\begin{pmatrix}
      a_{11}^{2}&2\,a_{11}a_{12}&2\,a_{11}a_{13}&a_{12}^{2}&2\,a_{12}a_{13}&a_{13}^{2}\\
      a_{11}a_{21}&a_{12}a_{21}+a_{11}a_{22}&a_{13}a_{21}+a_{11}a_{23}&a_{12}a_{22}&a_{13}a_{22}+a_{12}a_{23}&a_{13}a_{23}\\
      a_{11}a_{31}&a_{12}a_{31}+a_{11}a_{32}&a_{13}a_{31}+a_{11}a_{33}&a_{12}a_{32}&a_{13}a_{32}+a_{12}a_{33}&a_{13}a_{33}\\
      a_{21}^{2}&2\,a_{21}a_{22}&2\,a_{21}a_{23}&a_{22}^{2}&2\,a_{22}a_{23}&a_{23}^{2}\\
      a_{21}a_{31}&a_{22}a_{31}+a_{21}a_{32}&a_{23}a_{31}+a_{21}a_{33}&a_{22}a_{32}&a_{23}a_{32}+a_{22}a_{33}&a_{23}a_{33}\\
      a_{31}^{2}&2\,a_{31}a_{32}&2\,a_{31}a_{33}&a_{32}^{2}&2\,a_{32}a_{33}&a_{33}^{2}
\end{pmatrix}\,.
\end{aligned}$}
\end{equation}
For the code used, see the script \verb|symmetric_power_matrix.m2| available at \cite{salizzoni2025supplementary}.\hfill$\diamondsuit$
\end{example}

\begin{definition}\label{def: nonlinear Kalman matrix}
Let $d\ge 1$ and $n\ge 0$ be integer numbers. Recall that $N\coloneqq\binom{n-1+d}{d}$. Let $A=(a_{ij})$ be an $n\times n$ matrix filled with variables and fix $p\le N$. Given a matrix $C\in\C^{p\times N}$, the {\em Kalman matrix of order $d$} of $C$ is the matrix
\begin{equation}\label{eq: nonlinear Kalman matrix}
    K_d(C) = 
    \begin{pmatrix}
        C\\
        C\rho_d(A)\\
        C\rho_d(A)^2\\
        \vdots\\
        C\rho_d(A)^{N-p}
    \end{pmatrix}
\end{equation}
with $p\left[N-p+1\right]$ rows and $N$ columns filled with entries in $\C[A]$.
\end{definition}

The following result is an almost immediate partial generalization of \cite[Prop. 1.1]{ottaviani2013matrices}.

\begin{proposition}\label{prop: containment Kalman variety determinantal variety of Kalman matrix}
Let $X\subseteq\PP^{n-1}$ be a variety and suppose that its vanishing ideal is minimally generated by the homogeneous polynomials $f_1,\ldots,f_p$ of degrees $d_1\le\cdots\le d_p$ respectively. Set $d \coloneqq \mathrm{lcm}(d_1,\ldots,d_p)$. Let $r_1,\ldots,r_p$ be the row vectors of coefficients of the polynomials
$f_1^{d/d_1},\ldots,f_p^{d/d_p}$. Let $C_X$ be the matrix whose rows are a basis for the vector space $\langle r_1,\ldots,r_p\rangle\subseteq\C^N$.
Then the Kalman variety $\kk(X)$ is contained in the variety cut out by the maximal minors of the Kalman matrix $K_d(C_X)$.
\end{proposition}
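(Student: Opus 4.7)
The plan is to generalize~\cite[Prop. 1.1]{ottaviani2013matrices} by exhibiting, for every $[A]\in\kk(X)$, an explicit nonzero vector in the right kernel of $K_d(C_X)$. Once this is done, the rank of $K_d(C_X)$ drops below $N$ at $A$, which is equivalent to the vanishing of all its $N\times N$ minors. The kernel vector will be $\nu_d(v)$, the Veronese image of any eigenvector $v\in\C^n\setminus\{0\}$ of $A$ whose class lies in $X$.

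I would proceed in three steps. First, I establish the equivariance $\rho_d(A)\,\nu_d(v)=\nu_d(Av)$ for every $v\in\C^n$. This follows from Definition~\ref{def: symmetric power matrix}, which presents $\rho_d(A)$ as the matrix of the induced action on $\C[x]_d\cong\mathrm{Sym}^d\C^n$ sending $(v^*)^d\mapsto((Av)^*)^d$; the explicit computation in Example~\ref{ex: symmetric power matrix}, whose $j$th column is $\nu_d(A\,e_j)$, is the coordinate version of this identity. Substituting $Av=\lambda v$ yields $\rho_d(A)\,\nu_d(v) = \nu_d(\lambda v) = \lambda^d\,\nu_d(v)$, so $\nu_d(v)$ is an eigenvector of $\rho_d(A)$ with eigenvalue $\lambda^d$. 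Second, the membership $[v]\in X$ is equivalent to $f_i(v)=0$ for all $i\in[p]$, and since $d/d_i$ is a positive integer it is also equivalent to $f_i^{d/d_i}(v)=0$; writing $f_i^{d/d_i}=\sum_\alpha (r_i)_\alpha\,x^\alpha$, this reads $r_i\cdot\nu_d(v)=0$. As the rows of $C_X$ span $\langle r_1,\ldots,r_p\rangle$, these $p$ equalities collapse to $C_X\,\nu_d(v)=0$.

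The final step assembles the kernel vector into the full Kalman matrix: combining the two facts above, for every $k\ge 0$ one has $C_X\,\rho_d(A)^k\,\nu_d(v) = \lambda^{dk}\,C_X\,\nu_d(v) = 0$. Stacking these identities for $k=0,1,\ldots,N-p$ gives $K_d(C_X)\,\nu_d(v)=0$, and since $v\neq 0$ implies $\nu_d(v)\neq 0$, the Kalman matrix $K_d(C_X)$ has rank strictly less than $N$ at $A$, which is exactly the vanishing of all its maximal minors. The only piece of book-keeping to nail down is to make sure that $\rho_d(A)$ and the coefficient rows $r_i$ are written in the same basis of $\C[x]_d$, so that the equivariance identity and the inner-product interpretation of polynomial evaluation live in compatible coordinates; with this compatibility in hand, the argument is a direct Veronese lift of the linear-subspace case of \cite{ottaviani2013matrices} and poses no further obstacle.
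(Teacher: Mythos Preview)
Your proof is correct and follows essentially the same approach as the paper's own argument: both show that for $[A]\in\kk(X)$ with eigenvector $v$ on $X$, the Veronese image $\nu_d(v)$ lies in $\ker(C_X)$ and is an eigenvector of $\rho_d(A)$, whence $K_d(C_X)\nu_d(v)=0$. The only difference is packaging: the paper invokes \cite[Prop.~1.1]{ottaviani2013matrices} for the final step, whereas you unwind that step explicitly by computing $C_X\rho_d(A)^k\nu_d(v)=\lambda^{dk}C_X\nu_d(v)=0$ for all $k$; your closing remark about compatible bases is a sensible caveat that the paper leaves implicit.
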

\begin{proof}
The hypotheses tell us that $X=\nu_d^{-1}(\nu_d(\PP^{n-1})\cap\PP(\ker(C_X)))$. In particular, given a point $x=[v]\in\PP^{n-1}$, then $f_1(x)=\cdots=f_p(x)=0$ if and only if $C_X\,\nu_d(v)=0$. Furthermore, for every $[A]\in\kk(X)$, if $x=[v]\in X$ is an eigenpoint of $A$, then $\nu_d(x)=[\nu_d(v)]\in\PP(\ker(C_X))$ is an eigenpoint of the symmetric power $\rho_d(A)$. By applying \cite[Prop. 1.1]{ottaviani2013matrices}, we conclude that all maximal minors of $K_d(C_X)$ must vanish at $A$.
\end{proof}

In the following example and in the upcoming sections, we often consider a hypersurface $X\subseteq\PP^{n-1}$ cut out by a polynomial $f\in\C[x]_d$. In this case, we use the shorthand $K_d(f)$ to denote the Kalman matrix $K_d(C_f)$, where $C_f$ is the row vector of coefficients of $f$.

\begin{example}\label{ex: plane conic Kalman matrix}
Consider the plane conic $Q\subseteq\PP^2$ defined by the polynomial $f$ in \eqref{eq: plane conic with parameters}.
We have $Q=\{x=[v]\in\PP^2 \mid C_f\,\nu_2(v)=0\}$, where
\[
C_f =
\begin{pmatrix}
    b_{200} & b_{110} & b_{101} & b_{020} & b_{011} & b_{002}
\end{pmatrix}\,.
\]
As explained above, the conic $Q$ is isomorphic, via the Veronese embedding, to the linear section of the Veronese surface $\nu_2(\PP^2)$ with the hyperplane $\PP(\ker(C_f))$.
Given $A\in\C^{3\times 3}$, consider its symmetric power $\rho_2(A)\in\C^{6\times 6}$ written explicitly in \eqref{eq: symmetric power 3x3 matrix}.
The Kalman matrix $K_2(f)$ is
\[
K_2(f) =
\begin{pmatrix}
    C_f\\
    C_f\rho_d(A)\\
    C_f\rho_d(A)^2\\
    \vdots\\
    C_f\rho_d(A)^5
\end{pmatrix}
\in\C^{6\times 6}\,.
\]
The determinant $\det K_2(f)$ is a homogeneous polynomial in $\C[A]$ of degree $\sum_{i=0}^52i=2\binom{6}{2}=30$. Recall that the nonlinear Kalman variety $\kk(Q)$ is a hypersurface of degree $6$ in $\PP(\mathrm{Sym}^2\C^3)$. By Proposition~\ref{prop: containment Kalman variety determinantal variety of Kalman matrix}, its equation must divide $\det K_2(f)$. More precisely, we have
\begin{equation}\label{eq: factorization det Kalman conic}
\det K_2(f) = (\det A)^3\cdot g\cdot p\cdot q_1\cdot q_2\,,
\end{equation}
where $\deg g=3$ and $\deg p=\deg q_1=\deg q_2=6$. The polynomials $g$ and $p$ do not depend on $Q$, while $q_1$ and $q_2$ depend on $Q$. Indeed, one of the two polynomials $q_i$ is the equation of $\kk(Q)$. We study this factorization in more detail in Section \ref{sec: Kalman hypersurfaces}.
For the code used, see the script \verb|nonlinear_Kalman_matrix_conic.m2| available at \cite{salizzoni2025supplementary}.\hfill$\diamondsuit$
\end{example}

As we have seen in Example~\ref{ex: plane conic Kalman matrix}, the variety defined by the maximal minors of the Kalman matrix may be larger than the Kalman variety. In fact, when $X$ is a hypersurface of degree $d>1$, then the Kalman matrix is square, and its determinant has a factor that is independent of the defining equation of $X$ (compare with the factors $g$ and $p$ in \eqref{eq: factorization det Kalman conic}). In particular, in this case, we do not get equality between the determinant of the Kalman matrix and the defining equation of the Kalman variety. We focus on this fact in the upcoming Section~\ref{sec: Kalman hypersurfaces}.

\section{Kalman varieties of hypersurfaces}\label{sec: Kalman hypersurfaces}

In this section, we focus on the case when $X\subseteq\PP^{n-1}$ is a hypersurface cut out by a homogeneous polynomial $f\in\C[x]_d$. Recall Notation \ref{notations} and the Kalman matrix of order $d$ of $C_f$ in~\eqref{eq: nonlinear Kalman matrix}, which we write simply as $K_d(f)$.
In this case, it is a square matrix of size $N$ filled with polynomials in $\C[A]$.
We wish to describe the factors of the polynomial
\[
\det K_d(f)=\det K_d(f)(A)\in\C[A]\,.
\]
One of the irreducible factors of $\det K_d(f)$ is the equation of $\kk(X)$. The main result of this section (Theorem~\ref{thm: factors determinant Kalman matrix}) describes all irreducible factors of $\det K_d(f)$ and their multiplicities. This allows us to write the equation of $\kk(X)$ as in \eqref{eq: wannabe ratio}.

We introduce some further notations needed thereafter. Let $n$ and $d$ be positive integers. A partition of $d$ is a sequence $\mu=(\mu_1,\ldots,\mu_s)$ of positive integers written in non-decreasing order whose sum is $d$. We use the notation $\mu\vdash d$ to denote a partition of $d$ and define
\[
    P_d^{\mle n}\coloneqq\{\mu=(\mu_1,\ldots,\mu_s)\vdash d\mid s\le n\}\,,\quad P_d\coloneqq P_d^{\mle d}\,.
\]
An asymptotic formula for the cardinality of $P_d$ was given in~\cite{hardy1918asymptotic}, while an exact formula appeared in~\cite{bruinier2013algebraic}. In general, we have $|P_d^{\mle n}|= \sum_{i=1}^{\min(n,d)}p_{d,i}$, where $p_{d,i}$ is the number of partitions of $d$ in exactly $i$ parts, and we can compute $p_{d,i}$ recursively using the fact that $p_{d,i}=p_{d-1,i-1}+p_{d-i,i}$. 

Given $f\in\C[x]_d$, for each $\mu\in P_d^{\mle n}$ we denote by $f_{\mu}$ the image of $f$ with respect to the inclusion $\mathrm{Sym}^d\C^n\hookrightarrow\bigotimes_{i=1}^s\mathrm{Sym}^{\mu_i}\C^n$, or equivalently $\C[x]_d\hookrightarrow\bigotimes_{i=1}^s\C[x^{(i)}]_{\mu_i}$ where $x^{(i)}=(x_1^{(i)},\ldots,x_n^{(i)})$ is a vector of variables for every $i\in[s]$. In other words, we read $f$ either as a homogeneous polynomial of degree $d$ or as an $s$-homogeneous polynomial $f_\mu$ of multidegree $\mu$.

\begin{example}\label{ex: f11}
Consider the quadratic form $f(x_1,x_2,x_3)=x_2^2-x_1x_3$ and the partition $\mu=(1,1)$ of $2$. The bilinear form $f_{(1,1)}\in\C[x_1^{(1)},x_2^{(1)},x_3^{(1)}]\otimes\C[x_1^{(2)},x_2^{(2)},x_3^{(2)}]$ associated to $f$ is
\[
    f_{(1,1)} = x_2^{(1)}x_2^{(2)}-\frac{1}{2}\left(x_1^{(1)}x_3^{(2)}+x_3^{(1)}x_1^{(2)}\right)
\]
and is equivalent to reading a $3\times 3$ symmetric matrix in the larger space of $3\times 3$ nonsymmetric matrices.\hfill$\diamondsuit$
\end{example}

\begin{definition}\label{def: mu Kalman}
Let $f\in\C[x]_d\setminus\{0\}$ and $\mu=(\mu_1,\ldots,\mu_s)\in P_d^{\mle n}$.
The {\em $\mu$-Kalman variety $\kk_{\mu}(f)$} is the Zariski closure of the set 
\[
\resizebox{\textwidth}{!}{
$\begin{aligned}
\{[A]\in\PP^{n^2-1} \mid\text{$\exists\,v_1,\ldots,v_s$ eigenvectors of $A$ such that $f_{\mu}(v_1,\ldots,v_s)=0$ and $\dim\langle v_1,\ldots,v_s\rangle=s$}\}\,.
\end{aligned}$}
\]
\end{definition}

Observe that, for the trivial partition $\mu=(d)$, then $\kk_{(d)}(f)$ is the Kalman variety of $\V(f)$. In the following, we simply write $\kk(f)$ for $\kk(\V(f))$. Our first goal is to prove the following result.

\begin{theorem}\label{thm: degrees generalized Kalman varieties}
Let $f\in\C[x]_d\setminus\{0\}$. For any partition $\mu=(\mu_1,\ldots,\mu_s)\in P_d^{\mle n}$, the $\mu$-Kalman variety $\kk_{\mu}(f)$ is a hypersurface of degree
\begin{equation}\label{eq: degrees generalized Kalman varieties}
\deg\kk_{\mu}(f) = \frac{d\binom{n}{2}(n-1)_{s-1}}{m_1!\cdots m_d!}=\frac{(n-1)d}{2}\binom{n}{n-s,m_1,\ldots,m_d}\,,
\end{equation}
where $(a)_b\coloneqq\prod_{j=0}^{b-1}(a-j)$ is the falling factorial (with $(a)_0\coloneqq 1$) and $m_i=|\{j\in[s]\mid\mu_j=i\}|$ for all $i\in[d]$.
Furthermore, if $f$ is irreducible, then also $\kk_{\mu}(f)$ is irreducible.
\end{theorem}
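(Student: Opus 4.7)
The plan is to compute $\deg\kk_\mu(f)$ using an incidence correspondence built from the \emph{ordered eigen-frame variety}
\[
\mathcal{F} \;\coloneqq\; \overline{\{([A],[v_1],\ldots,[v_n])\in\PP^{n^2-1}\times(\PP^{n-1})^n : v_1,\ldots,v_n\text{ are linearly independent eigenvectors of }A\}}.
\]
Over the open locus $U\subseteq(\PP^{n-1})^n$ of linearly independent frames, $\mathcal{F}$ is a linear $\PP^{n-1}$-bundle (fixing a frame, the matrices with it as eigenbasis form an $n$-dimensional family parametrized by the eigenvalues). Hence $\mathcal{F}$ is irreducible of dimension $n^2-1$, and the first projection $\pi_1\colon\mathcal{F}\to\PP^{n^2-1}$ is surjective of generic degree $n!$, corresponding to the $n!$ orderings of the $n$ distinct eigenvectors of a generic matrix.

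For any ordered $s$-tuple $I=(i_1,\ldots,i_s)\subseteq[n]$, let $\mathcal{F}_\mu^I\subseteq\mathcal{F}$ denote the divisor $\{f_\mu(v_{i_1},\ldots,v_{i_s})=0\}$, of class $\mu_1h_{i_1}+\cdots+\mu_sh_{i_s}$ inherited from $(\PP^{n-1})^n$. By construction $\pi_1(\mathcal{F}_\mu^I)=\kk_\mu(f)$. For a generic $[A]\in\kk_\mu(f)$, the matrix $A$ has $n$ distinct eigenvectors, and a dimension argument shows that exactly one unordered $s$-subset of them admits an ordering satisfying $f_\mu=0$; moreover, the set of such admissible orderings forms a single orbit under the stabilizer $\prod_iS_{m_i}\subseteq S_s$ of $\mu$, of size $m_1!\cdots m_d!$. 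Completing such an admissible ordering to a full frame by filling the remaining $n-s$ positions with the remaining eigenvectors in $(n-s)!$ ways, the generic fiber of $\pi_1|_{\mathcal{F}_\mu^I}$ has cardinality $(m_1!\cdots m_d!)(n-s)!$, so in particular $\kk_\mu(f)$ is a hypersurface.

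By the $S_n$-symmetry permuting the $n$ frame positions, the intersection number $\alpha\coloneqq\int_\mathcal{F}\pi_1^*h^{n^2-2}\cdot h_i$ is independent of $i$. Geometrically, $\alpha$ counts frames $([A],[v_1],\ldots,[v_n])\in\mathcal{F}$ such that $[A]$ lies on a generic line $L\subseteq\PP^{n^2-1}$ and $[v_i]$ lies on a generic hyperplane $H\subseteq\PP^{n-1}$. Applying Proposition~\ref{prop: dim deg kalman} to $X=H$ gives $|\kk(H)\cap L|=\binom{n}{2}$; for each such $[A]$, exactly one of its eigenvectors lies on $H$ (placed at position $i$) while the other $n-1$ fill the remaining positions in $(n-1)!$ ways, yielding $\alpha=\binom{n}{2}(n-1)!$. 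The projection formula then produces
\[
(m_1!\cdots m_d!)(n-s)!\,\deg\kk_\mu(f)
\;=\;\int_\mathcal{F}\pi_1^*h^{n^2-2}\cdot[\mathcal{F}_\mu^I]
\;=\;\Bigl(\sum_{j=1}^s\mu_j\Bigr)\alpha
\;=\;d\binom{n}{2}(n-1)!,
\]
and the claimed formula follows from $(n-1)!/(n-s)!=(n-1)_{s-1}$.

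For irreducibility when $f$ is irreducible, I first argue that $f_\mu$ is irreducible as a multihomogeneous polynomial: a nontrivial factorization $f_\mu=g\cdot h$ with $g,h$ of positive multidegrees would, via the diagonal substitution and the polarization identity $f_\mu(v,\ldots,v)=(d!/\mu_1!\cdots\mu_s!)\,f(v)\neq 0$, produce a nontrivial factorization of $f$, contradicting the irreducibility of $f$. Consequently $V(f_\mu)\subseteq(\PP^{n-1})^s$ is irreducible, so $\mathcal{F}_\mu^I$ is irreducible (as the projective-bundle pullback of an irreducible divisor), and therefore $\kk_\mu(f)=\pi_1(\mathcal{F}_\mu^I)$ is irreducible. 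The main computational pitfall to avoid is the tempting but incorrect approach of computing via the naive $s$-eigenvector incidence $Y\subseteq\PP^{n^2-1}\times(\PP^{n-1})^s$, whose fundamental class differs from $\prod_i[Y^{(i)}]$ by excess contributions along the collision strata $\{v_i=v_j\}$; passing to the eigen-frame variety $\mathcal{F}$ circumvents this by separating the $s$ chosen positions within a full frame of $n$ distinct eigenvectors.
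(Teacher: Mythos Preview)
Your proof is correct and follows a genuinely different route from the paper's. The paper works with the $s$-eigenvector incidence $\widetilde{W}_s\subseteq\PP^{n^2-1}\times(\PP^{n-1})^s$ and must extract the coefficient of $h_0\sum_ih_1^{n-1}\cdots h_i^{n-2}\cdots h_s^{n-1}$ in $[\widetilde{W}_s]$; this is done through a substantial recursion over the collision strata $W_{s,P}$ indexed by set partitions of $[s]$, closed by a Stirling-number identity, together with a separate argument excluding contributions from the stratum of matrices with a two-dimensional eigenspace. You bypass all of this by passing to the \emph{full} ordered eigenframe variety $\mathcal{F}\subseteq\PP^{n^2-1}\times(\PP^{n-1})^n$, which is generically an $n!$-to-$1$ cover of $\PP^{n^2-1}$; the only intersection number you need, $\int_{\mathcal{F}}h_0^{n^2-2}h_i$, is then read off directly from $\deg\kk(H)=\binom{n}{2}$ for a hyperplane $H$. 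The paper's longer route yields partial information about the classes $[\widetilde{W}_s]$ themselves, potentially reusable elsewhere, while yours is substantially shorter and more transparent for this specific degree computation. Two minor remarks: under the paper's polarization convention one has $f_\mu(v,\ldots,v)=f(v)$ with no multinomial factor, though this does not affect your irreducibility argument; and your assertion that the generic fiber of $\pi_1|_{\mathcal{F}_\mu^I}$ has exactly $(m_1!\cdots m_d!)(n-s)!$ points relies on the same tacit genericity of $f$ that the paper invokes at the analogous step (``the degree of the projection is $m_1!\cdots m_d!$'').
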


Observe that, for $\mu=(d)$, then \eqref{eq: degrees generalized Kalman varieties} simplifies to $\deg\kk_{(d)}(f)=\deg\kk(f)=d\binom{n}{2}$ as in Proposition~\ref{prop: dim deg kalman}.

\begin{example}
One might choose one of the two equivalent expressions in \eqref{eq: degrees generalized Kalman varieties}. For example, assume $n=5$, $d=9$, and consider the partition $\mu=(1,2,2,4)$. Then $m_1=1$, $m_2=2$, $m_4=1$ and $m_i=0$ for all $i\in[9]\setminus\{1,2,4\}$. Furthermore $(4)_3=24$. Then
\[
\deg\kk_\mu(f)=\frac{9\cdot\binom{5}{2}\cdot 24}{2!} = 1080 = \frac{4\cdot 9}{2}\cdot\frac{5!}{2!}\,.\tag*{\text{$\diamondsuit$}}
\]
\end{example}

The proof of Theorem~\ref{thm: degrees generalized Kalman varieties} requires a few preliminary results.
In the following, we denote by $P=\{P_1,\ldots,P_k\}$ a partition of $[s]=\{1,\ldots,s\}$, where $\emptyset\neq P_i\subseteq[s]$ for all $i$.
Let $\kp_s$ denote the set of partitions of $[s]$.

\begin{definition}\label{def: P-compatible}
Given $P=\{P_1,\ldots,P_k\}\in\kp_s$, we say that an $s$-tuple $(v_1,\ldots,v_s)$ of nonzero vectors of $\C^n$ is {\em $P$-compatible} if $\dim\left\langle v_p\mid p\in P_i\right\rangle=1$ for all $i\in[k]$ and if $\dim\left\langle v_{j_1},\ldots,v_{j_k}\right\rangle=k$ where $j_i\in P_i$ for all $i\in[k]$.
\end{definition}

Let $n\ge s>0$ be two integers, and consider the variety
\begin{equation}
    W_s \coloneqq \left\{([A],[v_1],\ldots,[v_s])\mid\text{$v_1,\ldots,v_s$ are eigenvectors of $A$}\right\}\subseteq\PP_s\,,
\end{equation}
where $\PP_s\coloneqq\PP^{n^2-1}\times(\PP^{n-1})^{\times s}$. Furthermore, for each $P\in\kp_s$ consider the variety
\[
W_{s,P}^\circ \coloneqq \left\{([A],[v_1],\ldots,[v_s])\mid\text{$v_1,\ldots,v_s$ are eigenvectors of $A$ and $(v_1,\ldots,v_s)$ is $P$-compatible}\right\}
\]
and its Zariski closure $W_{s,P}\coloneqq\overline{W_{s,P}^\circ}$.

\begin{lemma}\label{lem: Ws decomposition}
For every $P\in\kp_s$, the variety $W_{s,P}$ is irreducible of codimension $s(n-1)$ in $\PP_s$.
Moreover, $\pi_1(\overline{W_s\setminus\bigcup_{P\in\kp_s}W_{s,P}})$ is the set of matrices with a two-dimensional eigenspace.
\end{lemma}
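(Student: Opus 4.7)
The plan is to attack the two statements separately. For the first, I would fix $P = \{P_1, \ldots, P_k\} \in \kp_s$ and reduce to an auxiliary variety that collapses the $P$-repetitions. Let $\iota_P \colon (\PP^{n-1})^k \hookrightarrow (\PP^{n-1})^s$ be the closed embedding sending $(x_1, \ldots, x_k)$ to $(y_1, \ldots, y_s)$ with $y_p = x_i$ whenever $p \in P_i$, and let $W^{(k)}$ be the closure in $\PP^{n^2-1} \times (\PP^{n-1})^k$ of the locus of $([A], x_1, \ldots, x_k)$ with $x_1, \ldots, x_k$ linearly independent eigenpoints of $A$. By construction $W_{s,P} = (\mathrm{id} \times \iota_P)(W^{(k)})$, so it suffices to show that $W^{(k)}$ is irreducible of dimension $n^2 - 1$. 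For this I would project onto $(\PP^{n-1})^k$ and restrict to the open subset $U$ of linearly independent $k$-tuples: over $([w_1], \ldots, [w_k]) \in U$ the fiber is $\PP\bigl(\bigcap_{i=1}^k \{A \in \C^{n \times n} : A w_i \in \langle w_i \rangle\}\bigr)$, a linear space of dimension $n^2 - 1 - k(n-1)$, because extending $w_1, \ldots, w_k$ to a basis of $\C^n$ makes the $k$ codimension-$(n-1)$ conditions manifestly independent. This exhibits $W^{(k)} \cap \pi_2^{-1}(U)$ as a projective bundle over the irreducible base $U$, so $W^{(k)}$ is irreducible of dimension $k(n-1) + (n^2 - 1 - k(n-1)) = n^2 - 1$, and the codimension of $W_{s,P}$ in $\PP_s$ is $s(n-1)$.

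For the second statement, let $B \subseteq \PP^{n^2-1}$ denote the locus of matrices admitting an eigenspace of dimension $\ge 2$; I would first note that $B$ is closed, since it is the image of the closed incidence $\{([A], [V]) \in \PP^{n^2-1} \times \mathrm{Gr}(2, n) : A|_V \in \C \cdot \mathrm{id}_V\}$ under the proper projection to the first factor. To show $\pi_1\bigl(\overline{W_s \setminus \bigcup_P W_{s,P}}\bigr) \subseteq B$, I would start with a point $([A], [v_1], \ldots, [v_s]) \in W_s \setminus \bigcup_P W_{s,P}$ and define $Q \in \kp_s$ by $p \sim q \iff [v_p] = [v_q]$. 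The first $Q$-compatibility condition holds automatically, so non-membership in $W_{s,Q}$ forces the representatives $w_1, \ldots, w_k$ (one per block of $Q$) to be linearly dependent. Since eigenvectors of $A$ with distinct eigenvalues are linearly independent, two of the $w_i$ must share an eigenvalue while remaining projectively distinct, witnessing a two-dimensional eigenspace of $A$. For the converse, given $A \in B$ with a two-dimensional eigenspace $E$, I would choose three distinct points $[v_1], [v_2], [v_3] \in \PP(E)$ and set $[v_4] = \cdots = [v_s] = [v_1]$; the subtriple $(v_1, v_2, v_3)$ is projectively distinct yet linearly dependent, so the tuple fails $P$-compatibility for every $P \in \kp_s$.

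The main technical step will be the projective bundle argument in part 1, specifically verifying that the $k$ simultaneous eigenvector conditions cut out independent linear subspaces. This reduces to a basis-change calculation using a basis starting with $w_1, \ldots, w_k$, in which the constraints become block-diagonal; the care needed is to make this locally trivial description globally consistent over $U$. The closedness of $B$ via the Grassmannian incidence, and the partition argument in part 2, are then essentially standard.
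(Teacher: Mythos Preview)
Your argument for the first statement is correct and is essentially the paper's proof in different packaging: both project to the eigenpoint factor and identify the fibers as linear spaces of codimension $k(n-1)$. You reduce via the diagonal embedding $\iota_P$ to the auxiliary $W^{(k)}$, while the paper computes directly that $\pi_2(W_{s,P})\cong(\PP^{n-1})^{\times k}$ has codimension $(s-k)(n-1)$ in $(\PP^{n-1})^{\times s}$; the total $s(n-1)$ comes out identically.

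For the second statement, your forward inclusion $\pi_1\bigl(\overline{W_s\setminus\bigcup_P W_{s,P}}\bigr)\subseteq B$ agrees with the paper's argument (the partition $Q$ by equality classes is exactly the paper's choice of pairwise distinct representatives), and you are more careful in explicitly noting that $B$ is closed. The paper in fact stops here: it only proves this one inclusion, and indeed the equality as stated fails for $s\le 2$, where $W_s=\bigcup_P W_{s,P}$ so the left side is empty while $B$ is not. Your attempted reverse inclusion, which implicitly assumes $s\ge 3$, has a genuine gap: you verify that your constructed tuple $([A],[v_1],[v_2],[v_3],[v_1],\ldots,[v_1])$ fails $P$-compatibility for every $P$, i.e.\ lies outside every $W_{s,P}^\circ$, but what you need is that it lies outside every \emph{closure} $W_{s,P}=\overline{W_{s,P}^\circ}$. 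For example, with $P=\{\{1,4,\ldots,s\},\{2\},\{3\}\}$ you would have to exclude that your tuple arises as a limit of $P$-compatible tuples $([A_t],\ldots)$ with three linearly independent eigenvectors of $A_t$ collapsing into the two-dimensional eigenspace $E$. For generic $[A]\in B$ this can be ruled out by a continuity-of-eigenspaces argument (only two eigenlines of $A_t$ can approach $E$), but it needs to be said. Since the paper does not attempt this direction at all, your proposal is already at least as complete as the original; the gap is in going beyond it.
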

\begin{proof}
Write $P=\{P_1,\ldots,P_k\}$. Consider the projection $\pi_2\colon W_{s,P}\to(\PP^{n-1})^{\times s}$. Then $\image\pi_2=\overline{\ku_{s,P}}$, where
\[
\ku_{s,P} \coloneqq \left\{([v_1],\ldots,[v_s])\mid\text {$(v_1,\ldots,v_s)$ is $P$-compatible}\right\}\,.
\]
This means that, for every $([v_1],\ldots,[v_s])\in \ku_{s,P}$, we have $\dim\left\langle v_p\mid p\in P_i\right\rangle=1$ for all $i\in[k]$ and $\dim\left\langle v_{j_1},\ldots,v_{j_k}\right\rangle=k$ where $j_i\in P_i$ for all $i\in[k]$.
Hence, the codimension of the image of $\pi_2$ is the sum of the codimensions of the determinantal varieties of $|P_i|\times n$ matrices of rank at most one, that is $\sum_{i=1}^k(|P_i|-1)(n-1)=(s-k)(n-1)$.
Additionally, the image of $\pi_2$ is isomorphic to $(\PP^{n-1})^{\times k}$ because $P$ is a partition, in particular $\image\pi_2$ is irreducible.
Now pick an element $([v_1],\ldots,[v_s])\in \ku_{s,P}$ and choose a representative $v_{j_i}$ with $j_i\in P_i$ for all $i\in[k]$.
We need to impose that $\rank(Av_{j_i}\mid v_{j_i})\le 1$ for every $i\in[k]$, which gives in total $k(n-1)$ linearly independent conditions on the matrix $A$. This means that all fibers of $\pi_2$ over $\ku_{s,P}$ are linear subspaces of codimension $k(n-1)$, and $W_{s,P}^\circ$ is the total space of a vector bundle over $\ku_{s,P}$. Summing up, the incidence variety $W_{s,P}$ is irreducible, and by the theorem on the dimension of fibers, its codimension is $\codim W_{s,P}=(s-k)(n-1)+k(n-1)=s(n-1)$.

For the last part of the statement, observe first that $W_s\supseteq\bigcup_{P\in\kp_s}W_{s,P}$ by definition. Consider a point $([A],[v_1],\ldots,[v_s])\in W_s\setminus\bigcup_{P\in\kp_s}W_{s,P}$. Then, there exist $m$ indices $j_1,\ldots,j_{m}$ such that $[v_{j_1}],\ldots,[v_{j_m}]$ are pairwise distinct and $\dim\langle v_{j_1},\ldots,v_{j_{m}}\rangle=m-1$. This implies that $A$ has an eigenspace of dimension at least $2$.
\end{proof}

\begin{example}\label{ex: decomposition W_3}
In this example, we compute the irreducible decomposition of $W_3$ for $n=3$ using \verb|Macaulay2|. Applying Lemma~\ref{lem: Ws decomposition}, we have that $W_{3,P}$ is irreducible of codimension $6$ in $\PP_3=\PP^8\times(\PP^2)^{\times 3}$ for all $P\in\kp_s$, namely for $P=\{\{1\},\{2\},\{3\}\}$, $P=\{\{1,2\},\{3\}\}$, $P=\{\{1,3\},\{2\}\}$, $P=\{\{1\},\{2,3\}\}$, and $P=\{\{1,2,3\}\}$. Setting $E_3\coloneqq\overline{W_3\setminus\bigcup_{P\in\kp_3}W_{3,P}}$, we verified that $E_3$ also has codimension $6$ in $\PP_3$ and
\[
W_3 = W_{3,\{\{1\},\{2\},\{3\}\}}\cup W_{3,\{\{1,2\},\{3\}\}}\cup W_{3,\{\{1,3\},\{2\}\}}\cup W_{3,\{\{1\},\{2,3\}\}}\cup W_{3,\{\{1,2,3\}\}}\cup E_3
\]
is an irreducible decomposition of $W_3$ over $\Q$. The projection of $E_3$ onto $\PP^8$ is the variety of points $[A]$ such that $A\in\C^{3\times 3}$ has a two-dimensional eigenspace.
For the code used, see the script \verb|variety_W_3.m2| available at \cite{salizzoni2025supplementary}.\hfill$\diamondsuit$
\end{example}

For every $s$ we define $\widetilde{W}_s \coloneqq W_{s,P}$ when $P=\{\{1\},\ldots,\{s\}\}$. The previous result tells us that $\widetilde{W}_s$ is an irreducible variety in $X$. We are interested in its class $[\widetilde{W}_s]\in A^{s(n-1)}(\PP_s)$, where
\[
A^*(\PP_s)=\bigoplus_{i\ge 0}A^i(\PP_s)=\frac{\Z[h_0,h_1,\ldots,h_s]}{(h_0^{n^2},h_1^n,\ldots,h_s^n)}
\]
is the Chow ring of $\PP_s$ and $h_i$ is the pullback of a hyperplane section from the $i$-th factor in $\PP_s$, for all $i\in[s]$.

Computing the complete class $[\widetilde{W}_s]$ is, in general, a hard task, and we leave its description to research further.
As explained later, here we are interested only in the linear part of $[\widetilde{W}_s]$ in the variable $h_0$.
In the following lemma, we provide a recursive formula to compute every class $[W_{s,P}]$ using the classes $[\widetilde{W}_s]$.

\begin{lemma}\label{lem: multidegrees equality}
Consider a partition $P=\{P_1,\ldots,P_k\}\in\kp_s$.
Let $q_i\coloneqq\min P_i$ for all $i\in[k]$. Consider the classes $[\widetilde{W}_k]\in A^{k(n-1)}(\PP_k)$ and $[W_{s,P}]\in^{s(n-1)}(\PP_s)$ where 
\[
A^*(\PP_k)=\frac{\Z[h_0,\ldots,h_k]}{(h_0^{n^2},h_1^n\ldots,h_k^n)}\text{ and }A^*(\PP_s)=\frac{\Z[t_0,\ldots,t_s]}{(t_0^{n^2},t_1^n,\ldots,t_s^n)}\,.
\]
Define the map
\[
    \varphi_P\colon A^*(\PP_k)\to A^*(\PP_s)\,,\quad
    \varphi_P(h_i)\coloneqq
    \begin{cases}
        t_0 & \text{if $i=0$,}\\
        t_{q_i} & \text{if $i\in[k]$.}
    \end{cases}
\]
Then
\[
[W_{s,P}] = \varphi_P([\widetilde{W}_k])\cdot\prod_{i=1}^k\prod_{p\in P_i\setminus\{q_i\}}\sum_{r=0}^{n-1}t_{q_i}^{n-1-r}t_{p}^r\,,
\]
where the product over $p\in P_i\setminus\{q_i\}$ is set to $1$ when $P_i$ has only one element, namely $P_i=\{q_i\}$.
\end{lemma}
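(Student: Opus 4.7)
The plan is to realize $W_{s,P}$ as a proper intersection inside $\PP_s$ of a pullback of $\widetilde{W}_k$ with small diagonals, and then multiply the resulting Chow classes. Consider the projection
\[
\pi\colon\PP_s=\PP^{n^2-1}\times(\PP^{n-1})^{\times s}\longrightarrow\PP^{n^2-1}\times(\PP^{n-1})^{\times k}=\PP_k
\]
that forgets every $\PP^{n-1}$-factor except those indexed by $q_1,\ldots,q_k$, and for each $i\in[k]$ and each $p\in P_i\setminus\{q_i\}$ let $\Delta_{q_i,p}\subseteq\PP_s$ denote the small diagonal $\{[v_{q_i}]=[v_p]\}$ in the two factors of index $q_i$ and $p$. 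The first step is to verify that set-theoretically
\[
W_{s,P}=\pi^{-1}(\widetilde{W}_k)\cap\bigcap_{i=1}^k\bigcap_{p\in P_i\setminus\{q_i\}}\Delta_{q_i,p}.
\]
This is a direct unpacking of Definition~\ref{def: P-compatible}: the diagonals enforce $[v_p]=[v_{q_i}]$ whenever $p$ and $q_i$ lie in the same block of $P$, while membership in $\pi^{-1}(\widetilde{W}_k)$ asserts that the selected representatives $v_{q_1},\ldots,v_{q_k}$ are linearly independent eigenvectors of $A$.

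The second step is a dimension and transversality count. By Lemma~\ref{lem: Ws decomposition}, $W_{s,P}$ is irreducible of codimension $s(n-1)$ in $\PP_s$. On the right-hand side, $\pi^{-1}(\widetilde{W}_k)$ has codimension $k(n-1)$ because $\pi$ is a flat projection between products, each $\Delta_{q_i,p}$ has codimension $n-1$, and there are $\sum_{i=1}^k(|P_i|-1)=s-k$ such diagonals. These codimensions add up to $k(n-1)+(s-k)(n-1)=s(n-1)$, matching the codimension of $W_{s,P}$. Combined with the irreducibility of $W_{s,P}$, this forces the intersection to be proper and to coincide with $W_{s,P}$ as a cycle.

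In the third step I compute the classes. By multiplicativity of Chow classes on properly intersecting subvarieties of the smooth projective ambient space, we obtain
\[
[W_{s,P}]=\pi^{*}[\widetilde{W}_k]\cdot\prod_{i=1}^{k}\prod_{p\in P_i\setminus\{q_i\}}[\Delta_{q_i,p}].
\]
The pullback $\pi^{*}$ sends the hyperplane classes of $\PP_k$ exactly as the substitution defining $\varphi_P$ ($h_0\mapsto t_0$ and $h_i\mapsto t_{q_i}$), so $\pi^{*}[\widetilde{W}_k]=\varphi_P([\widetilde{W}_k])$. The class of the diagonal in a product $\PP^{n-1}\times\PP^{n-1}$ with hyperplane classes $H_1,H_2$ is the classical $\sum_{r=0}^{n-1}H_1^{n-1-r}H_2^{r}$, which in our labeling reads $\sum_{r=0}^{n-1}t_{q_i}^{n-1-r}t_{p}^{r}$. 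Substituting yields the claimed formula. The main obstacle is justifying transversality rigorously, i.e.\ ruling out embedded components or excess intersection; the matching of codimensions with irreducibility of $W_{s,P}$ should suffice, but if subtleties arise one can restrict to the dense open locus where the $k$ eigenvalues attached to $v_{q_1},\ldots,v_{q_k}$ are pairwise distinct, where the transversality is manifest, and then propagate the class identity by taking closure.
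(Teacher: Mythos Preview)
Your proof is correct and follows essentially the same strategy as the paper: pull back $\widetilde{W}_k$ along the projection $\pi_P$ forgetting the non-representative factors, then intersect with conditions collapsing each block. The only difference is cosmetic: the paper imposes the block condition as the rank-$1$ determinantal locus of the matrix $B_i$ with columns $(v_p)_{p\in P_i}$, cites Miller--Sturmfels to identify its class with the complete homogeneous symmetric function $\sigma_i$, and then factors $\sigma_i$ modulo $t_p^n=0$ into your product of diagonal classes; you go directly to the pairwise diagonals $\Delta_{q_i,p}$ and their well-known class $\sum_r t_{q_i}^{n-1-r}t_p^r$, which is more economical and is exactly the geometric content of the paper's final remark about ``minors that include the first column.''

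One small caution: the sentence ``the matching of codimensions with irreducibility of $W_{s,P}$ should suffice'' is not a valid justification for multiplicity one --- a proper intersection with irreducible support can still carry multiplicity. What actually gives multiplicity one here is generic transversality, and this is immediate once you observe that $\pi^{-1}(\widetilde{W}_k)\cong\widetilde{W}_k\times(\PP^{n-1})^{s-k}$ with the extra $\PP^{n-1}$-factors unconstrained, so each $\Delta_{q_i,p}$ cuts down a free factor. You already say this in your final parenthetical, so just promote that observation to the main argument and drop the codimension-plus-irreducibility heuristic.
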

\begin{proof}
Recall that we write $\PP_\ell=\PP^{n^2-1}\times \prod_{i=1}^\ell\PP^{n-1}$ for any positive integer $\ell$. Consider the projection map
\[
\pi_P\colon \PP_s\to\PP_k\,,\quad\pi_P(A,v_1,\ldots,v_s)\coloneqq(A,v_{q_1},\ldots,v_{q_k})\,.
\] 
The pullback along $\pi_P$ induces the inclusion $\varphi_P\colon A^*(\PP_k)\to A^*(\PP_s)$ defined above. For the pullback of $[\widetilde{W}_k]$ along $\pi_P$ we find that
\[
    \varphi_P([\widetilde{W}_k])=\left[\widetilde{W}_k\times\prod_{i=1}^k\prod_{p\in P_i\setminus\{q_i\}}\PP^{n-1}\right]
\]
By definition, $W_{s,P}$ is the subvariety of $\widetilde{W}_k\times\prod_{i=1}^k\prod_{p\in P_i\setminus\{q_i\}}\PP^{n-1}$ consisting of all points $(A,v_1,\ldots,v_s)$ where for each part $P_i$ of the partition $P$, all $2\times 2$ minors of the matrix $B_i\in\C^{n\times |P_i|}$ whose columns are $v_j$ for all $j\in P_i$, vanish.
Applying \cite[Exerc. 15.5(b)]{miller2005combinatorial} (see also \cite[Cor. 16.27]{miller2005combinatorial}) and pulling back to $\PP_s$, we find that for $i$ as above, the class corresponding to the subvariety cut out by all $2\times 2$ minors of $B_i$ is the complete homogeneous symmetric function of degree $n-1$ in $|P_i|$ variables
\[
    \sigma_i\coloneqq\sum_{\substack{\alpha\in \N_0^{|P_i|} \\|\alpha|=n-1}}\prod_{p\in P_i}t_p^{\alpha_p}\,.
\]
First notice that the intersection of $\widetilde{W}_k\times\prod_{i=1}^k\prod_{p\in P_i\setminus\{q_i\}}\PP^{n-1}$ with the vanishing locus of the ideal of $2\times 2$ minors of $B_i$ is transversal since the coordinates $v_p$ with $p\in P_i\setminus \{q_i\}$ are unconstrained in $\widetilde{W}_k\times\prod_{i=1}^k\prod_{p\in P_i\setminus\{q_i\}}\PP^{n-1}$ and the intersection forces these coordinates to be equal to $v_{q_i}$. The intersection stays transversal even when we do the intersection for all values of $i\in[k]$ at once, since the coordinates that get constrained by the conditions that we impose for two distinct values of $i$ are disjoint. In other words, we get
\[
[W_{s,P}] = \left[\widetilde{W}_k\times\prod_{i=1}^k\prod_{p\in P_i\setminus\{q_i\}}\PP^{n-1}\right]\cdot\prod_{i=1}^k \sigma_i = \varphi_P([\widetilde{W}_k])\cdot\prod_{i=1}^k \sigma_i\,.
\]
To finish the proof we note that, modulo the conditions $t_p^{n}=0$ for all $p\in [s]$, each class $\sigma_i$ factors as
\[
    \sigma_i=\prod_{p\in P_i\setminus\{q_i\}}\sum_{r=0}^{n-1}t_{q_i}^{n-1-r}t_p^r\,.
\]
Geometrically, this corresponds to the fact that the vanishing of all $2\times 2$ minors of $B_i$ is equivalent to the vanishing of all $2\times 2$ minors that include the first column.
\end{proof}

\begin{example}\label{ex: apply equalities multidegrees}
We apply Lemma~\ref{lem: multidegrees equality} for small values of $s$. For simplicity, we adopt the following notations:
\[
A^*(\PP_1)=\frac{\Z[h_0,h_1]}{(h_0^{n^2},h_1^n)}\,,\quad A^*(\PP_2)=\frac{\Z[g_0,g_1,g_2]}{(g_0^{n^2},g_1^n,g_2^n)}\,,\quad A^*(\PP_3)=\frac{\Z[t_0,t_1,t_2,t_3]}{(t_0^{n^2},t_1^n,t_2^n,t_3^n)}\,.
\]
When $s=1$, we know already that
\[
    [\widetilde{W}_1]=[W_1]=[W_{1,\{\{1\}\}}]=\sum_{j=0}^{n-1}\binom{n}{j}h_0^{n-1-j}h_1^j\,.
\]
Consider $s=2$. Then
\begin{align*}
    [W_{2,\{\{1,2\}\}}] &= \varphi_{\{\{1,2\}\}}([\widetilde{W}_1])\cdot\sum_{r=0}^{n-1}g_1^{n-1-r}g_2^r = \sum_{j=0}^{n-1}\binom{n}{j}g_0^{n-1-j}g_1^j\cdot\sum_{r=0}^{n-1}g_1^{n-1-r}g_2^r\,,
\end{align*}
where $\varphi_{\{\{1,2\}\}}(h_0)=g_0$ and $\varphi_{\{\{1,2\}\}}(h_1)=g_1$. In this case $W_2=\widetilde{W}_2\cup W_{2,\{\{1,2\}\}}$ is an irreducible decomposition of $W_2$ and $\codim \widetilde{W}_2=\codim W_{2,\{\{1,2\}\}}=2(n-1)$ by Lemma~\ref{lem: Ws decomposition}. We can use this information to conclude that
\begin{align*}
    [\widetilde{W}_2] &= [W_2]-[W_{2,\{\{1,2\}\}}]\\
    &= \prod_{i=1}^2\left(\sum_{j=0}^{n-1}\binom{n}{j}g_0^{n-1-j}g_i^j\right)-\sum_{j=0}^{n-1}\binom{n}{j}g_0^{n-1-j}g_1^j\cdot\sum_{r=0}^{n-1}g_1^{n-1-r}g_2^r\\
    &= \sum_{j=0}^{n-1}\binom{n}{j}g_0^{n-1-j}g_1^j\cdot\sum_{r=0}^{n-1}\left(\binom{n}{r}g_0^{n-1-r}-g_1^{n-1-r}\right)g_2^r\,.
\end{align*}
Now consider $s=3$ and the partitions $\{\{1,2,3\}\}$, $\{\{i,j\},\{k\}\}$ for all three ordered tuples $(i,j,k)$ such that $\{i,j,k\}=[3]$. Then
\begin{align*}
\varphi_{\{\{1,2,3\}\}}(h_0) &= t_0\,,\ \varphi_{\{\{1,2,3\}\}}(h_1)=t_1\\
\varphi_{\{\{i,j\},\{k\}\}}(g_0) &= t_0\,,\ \varphi_{\{\{i,j\},\{k\}\}}(g_1) = t_i\,,\ \varphi_{\{\{i,j\},\{k\}\}}(g_2) = t_k\,.
\end{align*}
Using this information, we get
\begin{align*}
    [W_{3,\{\{1,2,3\}\}}] &= \varphi_{\{\{1,2,3\}\}}([\widetilde{W}_1])\cdot\prod_{p\in\{2,3\}}\sum_{r=0}^{n-1}t_1^{n-1-r}t_p^r = \sum_{j=0}^{n-1}\binom{n}{j}t_0^{n-1-j}t_1^j\prod_{p\in\{2,3\}}\sum_{r=0}^{n-1}t_1^{n-1-r}t_p^r\\
    [W_{3,\{\{i,j\},\{k\}\}}] &= \varphi_{\{\{i,j\},\{k\}\}}([\widetilde{W}_2])\cdot\sum_{r=0}^{n-1}t_i^{n-1-r}t_j^r\\
    &= \sum_{j=0}^{n-1}\binom{n}{j}t_0^{n-1-j}t_k^j\cdot\sum_{r=0}^{n-1}\left(\binom{n}{r}t_0^{n-1-r}-t_k^{n-1-r}\right)t_i^r\cdot\sum_{r=0}^{n-1}t_i^{n-1-r}t_j^r\,,
\end{align*}
which we use in Example \ref{ex: multidegrees components W_3}.\hfill$\diamondsuit$
\end{example}

Using the Lemma~\ref{lem: multidegrees equality}, we can determine recursively the linear part of every class $[W_{s,P}]$ in the variable $h_0$ using the classes $[\widetilde{W}_s]$. In the following statement, every time we consider a sum of monomials $g\in A^*(\PP_s)$ and the class $[Y]\in A^*(\PP_s)$ for some subvariety $Y\subseteq \PP_s$, we say that the coefficient of $g$ in $[Y]$ is $c$ if every monomial of $g$ appears in $[Y]$ with coefficient $c$.

\begin{corollary}\label{corol: coefficient equality}
    For every partition $P\in\kp_s$, the coefficient $c_{s,P}$ of $h_0\sum_{i=1}^s h_1^{n-1}\cdots h_i^{n-2}\cdots h_s^{n-1}$ in $[W_{s,P}]\in A^{s(n-1)}(\PP_s)$ is equal to the coefficient $\tilde{c}_s$ of $h_0\sum_{i=1}^{|P|} h_1^{n-1}\cdots h_i^{n-2}\cdots h_{\mcp}^{n-1}$ in $[\widetilde{W}_{\mcp}]\in A^{|P|(n-1)}(\PP_{\mcp})$.
\end{corollary}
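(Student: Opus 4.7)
The plan is to apply Lemma~\ref{lem: multidegrees equality} and then extract coefficients monomial by monomial. Write $k=|P|$, $q_j=\min P_j$ for $j\in[k]$, and set $\sigma_{j,p}\coloneqq\sum_{r=0}^{n-1}t_{q_j}^{n-1-r}t_p^r$ for each $j\in[k]$ and $p\in P_j\setminus\{q_j\}$. The lemma gives
\[
[W_{s,P}]=\varphi_P([\widetilde{W}_k])\cdot\prod_{j=1}^{k}\prod_{p\in P_j\setminus\{q_j\}}\sigma_{j,p}.
\]
For each $i\in[s]$, I would analyze the monomial $\mathfrak{m}_i\coloneqq t_0\cdot t_i^{n-2}\prod_{j\in[s]\setminus\{i\}}t_j^{n-1}$ separately, computing its coefficient in $[W_{s,P}]$ by enumerating the ways to write it as a product of a monomial coming from $\varphi_P([\widetilde{W}_k])$ and one term from each $\sigma_{j,p}$.

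The key observation is that $\varphi_P([\widetilde{W}_k])$ involves only the variables $t_0,t_{q_1},\ldots,t_{q_k}$; hence, for every non-representative index $p\in[s]\setminus\{q_1,\ldots,q_k\}$, the power of $t_p$ in $\mathfrak{m}_i$ must be supplied entirely by the unique factor $\sigma_{j(p),p}$ that contains $t_p$ (where $j(p)$ is the index with $p\in P_{j(p)}$). This pins down the term chosen in every such $\sigma$. I would then split the analysis according to whether $i$ is a representative $q_\ell$ or a non-representative element of some $P_\ell$.

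If $i=q_\ell$, then for every non-representative $p$ the requirement $t_p^{n-1}$ forces $r=n-1$ in $\sigma_{j(p),p}$, so the complementary product collapses to $\prod_p t_p^{n-1}$, and $\varphi_P([\widetilde{W}_k])$ must supply $t_0\cdot t_{q_\ell}^{n-2}\prod_{j\neq\ell}t_{q_j}^{n-1}$. If $i\in P_\ell\setminus\{q_\ell\}$, the same requirement for $p\neq i$ still forces $r=n-1$, while matching $t_i^{n-2}$ in $\sigma_{\ell,i}$ forces $r=n-2$ there, producing one extra $t_{q_\ell}$; bookkeeping the total exponent of each $t_{q_j}$ then shows that $\varphi_P([\widetilde{W}_k])$ must again contribute exactly $t_0\cdot t_{q_\ell}^{n-2}\prod_{j\neq\ell}t_{q_j}^{n-1}$. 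In both cases the splitting is unique and, since $\varphi_P$ is injective on monomials, the coefficient of $\mathfrak{m}_i$ in $[W_{s,P}]$ equals the coefficient of $h_0\cdot h_\ell^{n-2}\prod_{j\neq\ell}h_j^{n-1}$ in $[\widetilde{W}_k]$.

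To finish, since $\widetilde{W}_k$ is invariant under permutations of the $k$ eigenvector factors of $\PP_k$, its class is symmetric in $h_1,\ldots,h_k$, so all monomials $h_0\cdot h_\ell^{n-2}\prod_{j\neq\ell}h_j^{n-1}$ for $\ell\in[k]$ share a common coefficient, namely $\tilde{c}_s$. This simultaneously verifies that $c_{s,P}$ is well-defined in the sense of the convention stated just before the corollary and gives $c_{s,P}=\tilde{c}_s$. The main bookkeeping obstacle is confirming uniqueness of the contributing splitting in the non-representative case, which amounts to tracking how the $t_{q_\ell}$-exponents distribute across the complementary factors.
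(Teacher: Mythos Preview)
Your proof is correct and follows exactly the approach the paper intends: the corollary is stated without proof as an immediate consequence of Lemma~\ref{lem: multidegrees equality}, and your argument is precisely the monomial-by-monomial coefficient extraction needed to make that implication explicit. The case split into representatives and non-representatives, the uniqueness of the splitting (forced because each non-representative variable $t_p$ occurs in exactly one factor $\sigma_{j(p),p}$), and the appeal to the $\mathfrak{S}_k$-symmetry of $\widetilde{W}_k$ to identify the common coefficient $\tilde c_{|P|}$ are all correct and are what the paper leaves to the reader.
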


In the following lemma, we compute the coefficient $c_{s,P}$ defined in Corollary~\ref{corol: coefficient equality} explicitly.

\begin{lemma}\label{lem: coefficient of W tilde}
    Consider the notations of Lemma~\ref{lem: multidegrees equality} and Corollary~\ref{corol: coefficient equality}.
    For every partition $P\in\kp_s$, the coefficient $c_{s,P}$ of $h_0\sum_{i=1}^s h_1^{n-1}\cdots h_i^{n-2}\cdots h_s^{n-1}$ in $[W_{s,P}]\in A^{s(n-1)}(X)$ is equal to $\binom{n}{2}(n-1)_{|P|-1}$.
    In particular, the coefficient $\tilde{c}_s$ of $h_0\sum_{i=1}^s h_1^{n-1}\cdots h_i^{n-2}\cdots h_s^{n-1}$ in $[\widetilde{W}_s]\in A^{s(n-1)}(X)$ is equal to $\binom{n}{2}(n-1)_{s-1}$.
\end{lemma}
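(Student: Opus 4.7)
The plan is to first reduce the full statement to evaluating $\tilde c_s$, and then to compute $\tilde c_s$ by an enumerative count that uses Proposition~\ref{prop: dim deg kalman}.

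By Corollary~\ref{corol: coefficient equality}, for every $P\in\kp_s$ we have $c_{s,P}=\tilde c_{|P|}$; in particular, for the finest partition $P=\{\{1\},\ldots,\{s\}\}$ we have $W_{s,P}=\widetilde W_s$ and $c_{s,P}=\tilde c_s$. Therefore both assertions of the lemma follow at once from the single identity
\[
\tilde c_s = \binom{n}{2}(n-1)_{s-1}.
\]

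Next, I would exploit the symmetry of $\widetilde W_s$ under the natural action of the symmetric group on the last $s$ factors of $\PP_s$: since the defining condition ``$v_1,\ldots,v_s$ are linearly independent eigenvectors of $A$'' is manifestly permutation-invariant, the class $[\widetilde W_s]$ is symmetric in $h_1,\ldots,h_s$. Consequently, the $s$ monomials $h_0 h_1^{n-1}\cdots h_i^{n-2}\cdots h_s^{n-1}$ all appear in $[\widetilde W_s]$ with the same coefficient, and it suffices to compute the coefficient of, say, $h_0 h_1^{n-2} h_2^{n-1}\cdots h_s^{n-1}$.

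By the Poincar\'e pairing on $A^*(\PP_s)$, this coefficient equals $\deg\bigl([\widetilde W_s]\cdot h_0^{n^2-2}\, h_1\bigr)$. Geometrically, this counts tuples $([A],[v_1],\ldots,[v_s])\in\widetilde W_s$ where $[A]$ lies on a generic line $L\subseteq\PP^{n^2-1}$ and $[v_1]$ lies on a generic hyperplane $H_1\subseteq\PP^{n-1}$, with no further constraint on $[v_2],\ldots,[v_s]$. Equivalently, $A$ must possess at least one eigenvector in $H_1$, so $[A]\in L\cap\kk(H_1)$. Applying Proposition~\ref{prop: dim deg kalman} with $X=H_1$ (so $\dim H_1 = n-2$, hence $m = n-1$), we find that $\kk(H_1)$ is a hypersurface of degree $\binom{n}{n-2}=\binom{n}{2}$; hence a generic line meets it in $\binom{n}{2}$ points. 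For each such $[A]$, the proof of Proposition~\ref{prop: dim deg kalman} shows that $A$ has $n$ distinct eigenvectors in general position, exactly one of which lies on $H_1$. This pins down $[v_1]$, while $([v_2],\ldots,[v_s])$ is any ordered choice of $s-1$ distinct eigenvectors from the remaining $n-1$, contributing $(n-1)_{s-1}$ configurations. Multiplying, we obtain $\tilde c_s = \binom{n}{2}(n-1)_{s-1}$.

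The principal technical obstacle is to justify that the count is free of spurious multiplicities: namely, that a generic line $L$ meets $\kk(H_1)$ transversely in $\binom{n}{2}$ smooth points, and that at each such intersection point $A$ indeed has $n$ distinct eigenvectors in linear general position, with exactly one on $H_1$ and every $s$-subset linearly independent. These are standard Bertini-type genericity statements, which, combined with the irreducibility of $\kk(H_1)$ and the generic behaviour of eigenstructures over $\kk(H_1)$ established in~\cite{ottaviani2013matrices}, guarantee that each of the $\binom{n}{2}(n-1)_{s-1}$ tuples is counted with multiplicity one.
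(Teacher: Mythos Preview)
Your argument is correct and takes a genuinely different route from the paper's proof. Both proofs start the same way, invoking Corollary~\ref{corol: coefficient equality} to reduce everything to the single identity $\tilde c_s=\binom{n}{2}(n-1)_{s-1}$. From there the paths diverge.

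The paper proceeds by induction on $s$: it first computes the coefficient in the full class $[W_s]=\prod_{i=1}^s\sum_j\binom{n}{j}h_0^{n-1-j}h_i^j$ to be $\binom{n}{2}n^{s-1}$, then uses the decomposition coming from Lemma~\ref{lem: Ws decomposition} to write $\tilde c_s=\binom{n}{2}n^{s-1}-\sum_{P\neq\{\{1\},\ldots,\{s\}\}}\tilde c_{|P|}$ (the exceptional piece $Y$ contributes nothing to the $h_0$-linear part since $\pi_1(Y)$ has codimension three). The induction hypothesis turns the sum into one over Stirling numbers of the second kind, and the identity $\sum_{k}\genfrac\{\}{0pt}{1}{s}{k}(n)_k=n^s$ finishes the computation.

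Your approach bypasses both the induction and the Stirling identity: after using the $\mathfrak S_s$-symmetry of $\widetilde W_s$ you read $\tilde c_s$ directly as the intersection number $\deg\bigl([\widetilde W_s]\cdot h_0^{n^2-2}h_1\bigr)$ and evaluate it enumeratively via $\deg\kk(H_1)=\binom{n}{2}$ and an elementary count of ordered eigenvector tuples. This is shorter and conceptually cleaner; its cost is the transversality check you flag at the end, which the paper's inductive argument avoids entirely. That check is not hard---the intersection points lie in the smooth open locus $\widetilde W_s^\circ$, and you are slicing by pullbacks of generic hyperplanes from the factors, so Bertini applies---but it does need to be stated. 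The paper's route, in turn, has the advantage of simultaneously exercising the recursive machinery of Lemmas~\ref{lem: Ws decomposition} and~\ref{lem: multidegrees equality}, which is what makes Corollary~\ref{corol: coefficient equality} available in the first place.
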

\begin{proof}
First, we notice that $[W_s] = \prod_{i=1}^s\sum_{j=0}^{n-1}\binom{n}{j}h_0^{n-1-j}h_i^j$, in particular the coefficient of $h_0\sum_{i=1}^s h_1^{n-1}\cdots h_i^{n-2}\cdots h_s^{n-1}$ is equal to $\binom{n}{2}n^{s-1}$. To prove the statement, we proceed by induction on $s$. When $s=1$, then $\widetilde{W}_1=W_1$ and so $\tilde{c}_1=\binom{n}{2}$. We now assume that $\tilde{c}_k=\binom{n}{2}(n-1)_{k-1}$ for $k\le s-1$ and we prove that this equality holds for $k=s$.  By Lemma~\ref{lem: Ws decomposition} we have
\begin{equation}\label{eq: recursive formula}
    [\widetilde{W}_s]=[W_s]-\sum_{\substack{P\in\kp_s\\P\neq\{\{1\},\ldots,\{s\}\}}}[W_{s,P}]-[Y]
\end{equation}
where $Y:=\overline{W_s\setminus\bigcup_{P\in\kp_s}W_{s,P}}$. By Lemma~\ref{lem: Ws decomposition} we know that $\pi_1(Y)$ is the set of matrices with an eigenspace of dimension at least two in $\PP^{n^2-1}$. In particular $\codim\pi_1(Y)=3$, see~\cite[Tab. 2]{keller2008multiple}. This implies that the coefficient of $h_0\sum_{i=1}^s h_1^{n-1}\cdots h_i^{n-2}\cdots h_s^{n-1}$ in the class $[Y]$ is zero. From \eqref{eq: recursive formula} we obtain
\[
    \tilde{c}_s=\binom{n}{2}n^{s-1}-\sum_{\substack{P\in\kp_s\\P\neq\{\{1\},\ldots,\{s\}\}}}c_{s,P}\,.
\]
Therefore, by Corollary~\ref{corol: coefficient equality}, the coefficient of $h_0\sum_{i=1}^s h_1^{n-1}\cdots h_i^{n-2}\cdots h_s^{n-1}$ in the class $[\widetilde{W}_s]$ is
\begin{align*}
\tilde{c}_s&=\binom{n}{2}\left(n^{s-1}-\sum_{\substack{P\in\kp_s\\P\neq\{\{1\},\ldots,\{s\}\}}}(n-1)_{|P|-1}\right)= \binom{n}{2}\left( n^{s-1}-\sum_{k=1}^{s-1}{\genfrac\{\}{0pt}{0}{s}{k}}\binom{n-1}{k-1}(k-1)!\right)\\
&=\binom{n}{2}\left(n^{s-1}-\frac{1}{n}\sum_{k=0}^{s-1}{\genfrac\{\}{0pt}{0}{s}{k}}\binom{n}{k}k!\right)=\binom{n}{2}\left(n^{s-1}-\frac{1}{n}\sum_{k=0}^{s}{\genfrac\{\}{0pt}{0}{s}{k}}\binom{n}{k}k!+\frac{1}{n}\binom{n}{s}s!\right)\\
&\stackrel{(\star)}{=}\binom{n}{2}\binom{n-1}{s-1}(s-1)!=\binom{n}{2}(n-1)_{s-1}\,,
\end{align*}
where for every $k\le s$, the number $\genfrac\{\}{0pt}{1}{s}{k}$ is the $(s,k)$-Stirling number of second type (or the number of partitions of $[s]$ in exactly $k$ disjoint sets) and the equality $(\star)$ follows from~\cite[Eq. (6.10)]{graham1994concrete}.
\end{proof}

\begin{example}\label{ex: multidegrees components W_3}
Consider Examples \ref{ex: decomposition W_3} and \ref{ex: apply equalities multidegrees}. The multidegrees of the six irreducible components of $W_3$ for $n=3$ are respectively
\[
\resizebox{\textwidth}{!}{
$\begin{aligned}
    [\widetilde{W}_3] &= 6\,e_3^2+6\,e_2e_3\,t_0+2(e_2^2+2\,e_1e_3)t_0^2+3(e_1e_2+e_3)t_0^3+(e_1^2+3\,e_2)t_0^4+2\,e_1t_0^5+t_0^6\\
    [W_{3,\{\{i,j\},\{k\}\}}] &= 6\,e_3^2+6\,e_2e_3t_0+2(b_2^2+4\,b_1b_2t_k+(b_1^2-b_2)t_k^2)t_0^2+3(b_1b_2+(b_1^2-b_2)t_k)t_0^3+(b_1^2-b_2)t_0^4\\
    [W_{3,\{\{1,2,3\}\}}] &= 3\,e_3^2+3\,e_2e_3t_0+(e_2^2-e_1e_3)t_0^2\\
    [E_3] &= 6\,e_3t_0^3+3\,e_2t_0^4+e_1t_0^5\,,
\end{aligned}$}
\]
where $e_\ell(t_1,t_2,t_3)$ is the $\ell$th elementary symmetric function for all $\ell\in[3]$, and in the expansion of $[W_{3,\{\{i,j\},\{k\}\}}]$, we consider all three ordered tuples $(i,j,k)$ such that $\{i,j,k\}=[3]$, while $b_\ell(t_i,t_j)$ is also the $\ell$th elementary symmetric function in $(t_i,t_j)$ for all $\ell\in[2]$. The first three multidegrees descend by Example \ref{ex: apply equalities multidegrees}, while the class $[E_3]$ can be derived from the script \verb|variety_W_3.m2| available at \cite{salizzoni2025supplementary}.
Observe also that the coefficient $\tilde{c}_3$ of the linear part in $h_0$ in $[\widetilde{W}_3]\in A^6(\PP_3)$ is equal to $6=\binom{3}{2}(2)_2$, as predicted by Lemma~\ref{lem: coefficient of W tilde}.\hfill$\diamondsuit$
\end{example}

We are now ready to prove the first main result of this section.

\begin{proof}[Proof of Theorem~\ref{thm: degrees generalized Kalman varieties}]
Consider a partition $\mu=(\mu_1,\ldots,\mu_s)\vdash d$. The fact that $\kk_{\mu}(f)$ has codimension one and is irreducible if $f$ is, follows by a similar argument to that in Proposition \ref{prop: dim deg kalman} by realising $\kk_{\mu}(f)$ as the image of a vector bundle over $\V(f_\mu)\subseteq(\PP^{n-1})^{\times s}$ under a projection. We focus on the degree computation.
The $\mu$-Kalman variety $\kk_{\mu}(f)$ is the projection of $\widetilde{W}_s\cap(\PP^{n^2-1}\times\V(f_\mu))$ onto $\PP^{n^2-1}$, where
\[
\V(f_\mu)=\{([v_1],\ldots,[v_s])\mid f_\mu(v_1,\ldots,v_s)=0\}\,.
\]
The polynomial $f$ and the partition $\mu$ yield a divisor $\V(f_\mu)$ in $(\PP^{n-1})^{\times s}$.
Our goal is showing that the subvarieties $\widetilde{W}_s$ and $\PP^{n^2-1}\times\V(f_\mu)$ of $\PP_s$ are generically transverse.
Consider a point $([A],x_1,\ldots,x_s)\in\widetilde{W}_s\cap(\PP^{n^2-1}\times\V(f_\mu))\subseteq\PP_s$ such that $(x_1,\ldots,x_s)$ is a nonsingular point of $\widetilde{W}_s$ and of $\PP^{n^2-1}\times\V(f_\mu)$.
We consider all previous varieties in their corresponding Segre embedding in $\PP(\C^{n\times n}\otimes(\C^n)^{\otimes s})$, in particular, we identify $([A],x_1,\ldots,x_s)$ with $[A\otimes x_1\otimes\cdots\otimes x_s]$. We also denote by $T_pZ$ the affine tangent space to $Z\subseteq\PP^{n-1}$ at a nonsingular point $p$ of $Z$, namely the cone over the projective tangent space of $Z$ at $p$.
On the one hand, we have that
\[
T_{([A],x_1,\ldots,x_s)}(\PP^{n^2-1}\times\V(f_\mu)) = A\otimes T_{(x_1,\ldots,x_s)}\V(f_\mu)+\C^{n\times n}\otimes x_1\otimes\cdots\otimes x_s\,.
\]
Furthermore, observe that $\pi_2(\widetilde{W}_s)=(\PP^{n-1})^{\times s}$ by Lemma~\ref{lem: Ws decomposition}. This tells us that
\[
A\otimes T_{(x_1,\ldots,x_s)}(\PP^{n-1})^{\times s}\subseteq T_{([A],x_1,\ldots,x_s)}\widetilde{W}_s\,.
\]
Summing up, we have
\begin{align*}
    T_{([A],x_1,\ldots,x_s)}\PP_s &= A\otimes T_{(x_1,\ldots,x_s)}(\PP^{n-1})^{\times s}+\C^{n\times n}\otimes x_1\otimes\cdots\otimes x_s\\
    &\subseteq T_{([A],x_1,\ldots,x_s)}\widetilde{W}_s+T_{([A],x_1,\ldots,x_s)}(\PP^{n^2-1}\times\V(f_\mu))\\
    &\subseteq T_{([A],x_1,\ldots,x_s)}\PP_s\,,
\end{align*}
hence the previous containments are equalities. This proves that the subvarieties $\widetilde{W}_s$ and $\PP^{n^2-1}\times\V(f_\mu)$ of $\PP_s$ are generically transverse, yielding
\[
[\widetilde{W}_s\cap(\PP^{n^2-1}\times\V(f_\mu))] = [\widetilde{W}_s]\cdot[\PP^{n^2-1}\times\V(f_\mu)] = [\widetilde{W}_s]\cdot\sum_{i=1}^s\mu_i h_i\,.
\]
It follows that the product between $\deg\kk_{\mu}(f)$ and the degree of the projection onto $\PP^{n^2-1}$ is equal to the coefficient of $h_0\prod_{i=1}^s h_i^{n-1}$ in the previous expansion, which is equal to the coefficient of $h_0\prod_{i=1}^s h_i^{n-1}$ in the product
\[
\tilde{c}_s\,h_0\sum_{i=1}^s h_1^{n-1}\cdots h_i^{n-2}\cdots h_s^{n-1}\cdot\sum_{i=1}^s\mu_i h_i = \tilde{c}_s(\mu_1+\cdots+\mu_s)\,h_0\prod_{i=1}^s h_i^{n-1} = \tilde{c}_s d\,h_0\prod_{i=1}^s h_i^{n-1}\,,
\]
where $\tilde{c}_s=\binom{n}{2}(n-1)_{s-1}$ by Lemma~\ref{lem: coefficient of W tilde}. Since the degree of the projection is equal to $m_1!\cdots m_d!$, the statement follows.
\end{proof}

\begin{corollary}\label{corol: factors of det Kalman matrix}
Let $f\in\C[x]_d$ and define the total Kalman variety of $f$ as
\[
    \kk_{\mathrm{tot}}(f)\coloneqq\bigcup_{\mu\in P_d^{\mle n}}\kk_{\mu}(f)\,.
\]
If $f$ is generic, the hypersurfaces $\kk_{\mu}(f)$ are the irreducible components of $\kk_{\mathrm{tot}}(f)$.
\end{corollary}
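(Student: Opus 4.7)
My plan is to verify the two properties required for the $\kk_\mu(f)$'s to be the irreducible components of $\kk_{\mathrm{tot}}(f)$: irreducibility of each, and distinctness of any two. Irreducibility is immediate from Theorem~\ref{thm: degrees generalized Kalman varieties}, since for generic $f$ the polynomial is irreducible. As all $\kk_\mu(f)$'s are irreducible hypersurfaces in the same ambient $\PP^{n^2-1}$, any containment between two of them forces equality; hence distinctness reduces to showing, for each pair $\mu \neq \mu'$ in $P_d^{\le n}$, the existence of a matrix $[A] \in \kk_\mu(f) \setminus \kk_{\mu'}(f)$.

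I would construct such $[A]$ inside the product $(\PP^{n-1})^{\times n}$ with coordinates $[v_1], \ldots, [v_n]$. Writing $\mu = (\mu_1, \ldots, \mu_s)$ and $\mu' = (\mu'_1, \ldots, \mu'_{s'})$, the hypersurface $\V(f_\mu(v_1, \ldots, v_s))$ is multi-homogeneous of multi-degree $(\mu_1, \ldots, \mu_s, 0, \ldots, 0)$. For every ordered $s'$-tuple $\mathbf{i} = (i_1, \ldots, i_{s'})$ of distinct indices in $[n]$, the hypersurface $\V(f_{\mu'}(v_{i_1}, \ldots, v_{i_{s'}}))$ has multi-degree with $\mu'_j$ in slot $i_j$ and $0$ elsewhere. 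Assuming $f$ generic enough that $f_\mu$ is irreducible, a containment $\V(f_\mu) \subseteq \V(f_{\mu'}(v_{i_1}, \ldots, v_{i_{s'}}))$ would force $f_\mu$ to divide $f_{\mu'}(v_{i_1}, \ldots, v_{i_{s'}})$ by the Nullstellensatz; since both have total degree $d$, they must be scalar multiples and thus share multi-degrees. Matching multi-degrees then forces $\{i_1, \ldots, i_{s'}\} = [s]$ and the sorted part-sequences $\mu$ and $\mu'$ to agree, contradicting $\mu \neq \mu'$.

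It then follows that $\V(f_\mu) \not\subseteq \V(f_{\mu'}(v_{i_1}, \ldots, v_{i_{s'}}))$ for any $\mathbf{i}$, and since the right-hand side ranges over a finite collection of hypersurfaces, a generic point $(v_1, \ldots, v_n) \in \V(f_\mu)$ avoids every such hypersurface and has linearly independent entries. Setting $A \coloneqq V D V^{-1}$ with $V = (v_1 \mid \cdots \mid v_n)$ and $D$ diagonal with pairwise distinct entries, the eigenpoints of $A$ are exactly $[v_1], \ldots, [v_n]$; then $[A] \in \kk_\mu(f)$ because $f_\mu(v_1, \ldots, v_s) = 0$, while $[A] \notin \kk_{\mu'}(f)$ because no ordered $s'$-tuple of eigenvectors of $A$ satisfies $f_{\mu'} = 0$. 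The principal obstacle is justifying that $f_\mu$ is irreducible for generic $f$: non-irreducibility is a Zariski-closed condition on $f \in \C[x]_d$, so it suffices to exhibit, for each $\mu \in P_d^{\le n}$, a single $f$ for which $f_\mu$ is irreducible; this verification, together with intersecting the finitely many resulting generic conditions, completes the argument.
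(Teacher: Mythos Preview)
The paper states this corollary without proof, treating it as immediate from Theorem~\ref{thm: degrees generalized Kalman varieties}: irreducibility of each $\kk_\mu(f)$ follows because a generic $f$ is irreducible, while pairwise distinctness is left implicit. Your proposal actually supplies this missing step and is essentially correct.

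The ``principal obstacle'' you flag dissolves more easily than you suggest, and no case-by-case example is needed. If $f_\mu = p \cdot q$ with $p,q$ multi-homogeneous of nonzero multi-degrees $\alpha,\beta$ (so $\alpha+\beta=\mu$), restrict to the diagonal $v_1=\cdots=v_s=v$: since $f_\mu(v,\ldots,v)=f(v)$, one obtains $f = p|_\Delta \cdot q|_\Delta$ with each factor homogeneous of positive degree $|\alpha|$, $|\beta|$. Neither restriction can vanish identically (else $f\equiv 0$), so this is a nontrivial factorization of $f$. Thus $f$ irreducible forces $f_\mu$ irreducible for every $\mu\in P_d^{\mle n}$. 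One minor point you elide: your $[A]$ is shown not to lie in the \emph{open} defining set of $\kk_{\mu'}(f)$, but that variety is a Zariski closure. Since your $A$ has $n$ distinct eigenvalues, it lies in the dense open locus $U\subseteq\PP^{n^2-1}$ where the eigenpoints vary algebraically (via the finite cover by ordered eigenbases); on $U$ the condition ``some ordered $s'$-tuple of eigenvectors lies on $\V(f_{\mu'})$'' is already closed, so $\kk_{\mu'}(f)\cap U$ coincides with its open defining set restricted to $U$, and indeed $[A]\notin\kk_{\mu'}(f)$.
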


Recall the notations given at the beginning of the section.
After computing the degrees of all hypersurfaces $\kk_\mu(f)$, we now turn our attention to all irreducible factors of $\det K_d(f)$ and to their multiplicities. In particular, we compare $\det K_d(f)$ to the discriminant of the characteristic polynomial of $\rho_d(A)$, denoted by $\Delta_d\in\C[A]$, where $\rho_d(A)$ was introduced in Definition~\ref{def: symmetric power matrix}. We also denote with $\Delta\coloneqq\Delta_1\in\C[A]$ the discriminant of the characteristic polynomial of $A$.

Our first result in this direction is a geometric statement about the factors of $\det K_d(f)$ which do not depend on $C_f$.

\begin{proposition}\label{prop: discriminant_factors_without_multi}
Every irreducible factor of $\Delta_d$ is either a factor of $\det K_d(f)$ or of $\Delta$. In this way, all the factors of $\det K_d(f)$ that are independent of $f$ arise. 
\end{proposition}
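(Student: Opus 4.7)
The plan is to work over the algebraic closure of $\C(A)$, where a generic matrix has $n$ distinct eigenvalues $\lambda_1,\ldots,\lambda_n$ with linearly independent eigenvectors $v_1,\ldots,v_n$. Then $\rho_d(A)$ has eigenvalues $\mu_I \coloneqq \lambda_{i_1}\cdots\lambda_{i_d}$ and eigenvectors $w_I \coloneqq v_{i_1}^*\cdots v_{i_d}^*\in\C[x]_d$, both indexed by multi-indices $I=(i_1\le\cdots\le i_d)$. Since $v_1^*,\ldots,v_n^*$ form a basis of $\C[x]_1$ precisely on $\{\Delta\ne 0\}$, the Veronese products $w_I$ form a basis of $\C[x]_d$ there; in particular, $w_I$ and $w_J$ are linearly independent whenever $I\ne J$.

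For the first assertion, let $p$ be an irreducible factor of $\Delta_d$. If $\{p=0\}\subseteq\{\Delta=0\}$, irreducibility forces $p\mid\Delta$. Otherwise, fix a generic point $A_0\in\{p=0\}$ with $\Delta(A_0)\ne 0$: then $\Delta_d(A_0)=0$ produces a collision $\mu_I(A_0)=\mu_J(A_0)$ with $I\ne J$, and $w_I(A_0),w_J(A_0)$ span a $2$-dimensional eigenspace of $\rho_d(A_0)$. Consequently, every hyperplane $\ker C_f\subseteq\C[x]_d$ meets this $2$-plane nontrivially, producing an eigenvector of $\rho_d(A_0)$ in $\ker C_f$. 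By Kalman's Observability Condition applied to $\rho_d(A)$ and $C_f$, we deduce $\det K_d(f)(A_0)=0$ for every $f$. Thus $\det K_d(f)$ vanishes on a Zariski-dense subset of $\{p=0\}$, and by the Nullstellensatz together with irreducibility of $p$, we obtain $p\mid\det K_d(f)$ inside $\C[A,b]$, where $b$ denotes the coefficients of $f$.

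For the second assertion, let $q\in\C[A]$ be an irreducible factor of $\det K_d(f)\in\C[A,b]$. Fix $A_0$ with $q(A_0)=0$; then $\det K_d(f)(A_0)=0$ for every $f$, so $\rho_d(A_0)$ admits an eigenvector in every hyperplane of $\C[x]_d$. If all eigenspaces $L_1,\ldots,L_k$ of $\rho_d(A_0)$ were one-dimensional, the set of hyperplanes containing at least one $L_i$ would be the proper subvariety $\bigcup_i L_i^\perp$ of the dual projective space, and a generic hyperplane would avoid every eigenline—a contradiction. Hence $\rho_d(A_0)$ has an eigenspace of dimension $\ge 2$, so a repeated eigenvalue, and $\Delta_d(A_0)=0$. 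Thus $\{q=0\}\subseteq\{\Delta_d=0\}$, and irreducibility of $q$ yields $q\mid\Delta_d$.

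The main obstacle is establishing the linear independence of $w_I(A_0),w_J(A_0)$ in the first assertion; this is precisely why we restrict to the open set $\{\Delta\ne 0\}$, on which the $v_j^*$'s form a basis of $\C[x]_1$ so that the monomials $w_I$ are linearly independent. A secondary delicate point is that the hyperplane-avoidance argument in the second assertion must use the honest eigenspaces of $\rho_d(A_0)$ rather than generalized ones; this is fine because the Kalman matrix detects true eigenvectors, and a non-diagonalizable $\rho_d(A_0)$ with only one-dimensional eigenspaces would still leave the union of eigenlines inside a proper subvariety of hyperplane-space.
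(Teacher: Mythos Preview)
Your argument is correct and follows essentially the same route as the paper: both directions hinge on the equivalence between ``$\rho_d(A)$ has an eigenvector in every hyperplane of $\C[x]_d$'' and ``$\rho_d(A)$ has an eigenspace of dimension $\ge 2$'', and both use diagonalizability on the open set $\{\Delta\neq 0\}$ to pass from a repeated eigenvalue of $\rho_d(A)$ to an honest $2$-dimensional eigenspace.

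One point worth flagging: the paper's proof contains an additional paragraph showing that $\Delta$ itself does \emph{not} divide $\det K_d(f)$ for generic $f$, i.e., the dichotomy ``factor of $\det K_d(f)$ or factor of $\Delta$'' is in fact exclusive. The argument is that a generic $B$ in $\V(\Delta)$ is non-diagonalizable with one-dimensional eigenspaces, so $\rho_d(B)$ also has only one-dimensional eigenspaces and hence misses a generic hyperplane. Your write-up establishes the inclusive disjunction, which is all the proposition literally states, but you may want to add this sharpening since it is what makes the phrase ``in this way all the $f$-independent factors arise'' unambiguous (namely, they are exactly the factors of $\Delta_d$ that are not $\Delta$).
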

\begin{proof}
By \cite[Prop. 1.1]{ottaviani2013matrices}, $\det K_d(f)(B)=0$ if and only if $\rho_d(B)$ has an eigenvector in the hyperplane $\PP(\ker(C_f))\subseteq\PP^{N-1}$. Hence, the irreducible factors of $\det K_d(f)$ which are independent of $C_f$ vanish precisely on those $B$ for which $\rho_d(B)$ has an eigenvector in every hyperplane of $\PP^{N-1}$. This condition is satisfied if and only if $\rho_d(B)$ has an eigenspace of dimension at least two. 

Let us investigate the eigenpairs of $\rho_d(B)$. If $(\lambda_1,v_1),\ldots,(\lambda_d,v_d)$ are eigenpairs of $B$ (not necessarily distinct), then $(\lambda_1\cdots\lambda_d,v_1\cdots v_d)$ is an eigenpair of $\rho_d(B)$. We claim that in this way we obtain a generating set for every eigenspace of $\rho_d(B)$. If $B$ is diagonalizable, this is obvious simply by a dimension count. Otherwise, one can approximate $B$ by a sequence of diagonal matrices and notice that the collapsing of any eigenspace of $B$ directly translates to the collapsing of several eigenspaces of $\rho_d(B)$. Again, by counting dimensions and the number of collapsing eigenspaces, we obtain the result.
From now on, let $\lambda_1,\ldots,\lambda_n$ be all the eigenvalues of $B$, appearing with their algebraic multiplicity. Then all degree $d$ monomials in $\lambda_1,\ldots,\lambda_n$ are all eigenvalues of $\rho_d(B)$ listed with their algebraic multiplicities.

We first prove that for any value of $n$, each irreducible factor of $\det K_d(f)$ which is independent of $C_f$ is a factor of $\Delta_d$. Let $q$ be such an irreducible factor of $\det K_d(f)$ and assume that $q(B)=0$ for some $B\in\C^{n\times n}$. It suffices to show that $\Delta_d(B)=0$. By the first paragraph, since $q(B)=0$, $\rho_d(B)$ must have an eigenspace of dimension at least two. This means that at least one eigenvalue of $\rho_d(B)$ has geometric multiplicity at least two. Since the geometric multiplicity lower bounds the algebraic multiplicity, this eigenvalue is a multiple root of the characteristic polynomial of $\rho_d(B)$ and hence $\Delta_d(B)=0$ by definition of the discriminant.

Next, let $q$ be an irreducible factor of $\Delta_d$ and assume first that there is no $\lambda\in\C$ such that $q=\lambda\,\Delta$. Let $B\in\C^{n\times n}$ be generic within the locus of all matrices satisfying $q(B)=0$. We wish to show $\det K_d(f)(B)=0$. To do so, we first argue that $B$ is diagonalizable. In fact, if $B$ were not diagonalizable, then $\Delta(B)=0$. Since by assumption $q$ is not a factor of $\Delta_d$, the latter defines a subvariety inside $\V(q)$ of codimension one. Hence, a generic matrix $B$ in $\V(q)$ does not have two equal eigenvalues, and therefore it must be diagonalizable. On the other hand, since $q(B)=0$, also $\Delta_d(B)=0$, hence there exists an eigenvalue of $\rho_d(B)$ with algebraic multiplicity at least two. Since $B$ and therefore also $\rho_d(B)$ is diagonalizable, algebraic and geometric multiplicities agree; in fact, $\rho_d(B)$ has an eigenspace of dimension at least two. This eigenspace is forced to meet the hyperplane defined by $C_f$, and any vector in this intersection belongs to the kernel of the matrix $K_d(C_f)$ evaluated at $B$. Hence $\det K_d(f)(B)=0$.

Finally, let us prove that, for a sufficiently generic $f$, the irreducible factors of $\Delta$ do not appear as factors of $\det K_d(f)$ but do appear as factors of $\Delta_d$. Clearly, if two eigenvalues of $B$ agree, then at least two eigenvalues of $\rho_d(B)$ agree, as those are monomials in the eigenvalues of $B$. Hence $\Delta$ divides $\Delta_d$. Assume by contradiction that $\Delta$ shares a factor $q$ with $\det K_d(f)$. Then a generic matrix $B$ satisfying $q(B)=0$ would also satisfy $\det K_d(f)(B)=0$. However, within the locus of matrices that have two eigenvalues that agree, the set of diagonalizable matrices is low-dimensional (this follows from the Jordan normal form, where diagonalizability requires an additional entry to be zero). Hence, a generic matrix $B$ satisfying $q(B)=0$ has one-dimensional eigenspaces that do not span the ambient space. But then also $\rho_d(B)$ has one-dimensional eigenspaces, and so by the first paragraph of this proof, the factors of $\det K_d(f)$ which are independent of $C_f$ do not vanish at $B$.
\end{proof}

All the vanishing loci of the factors of $\det K_d(f)$ are described by Corollary~\ref{corol: factors of det Kalman matrix} and Proposition~\ref{prop: discriminant_factors_without_multi}. It remains to compute their multiplicities. 

\begin{lemma}\label{lem: discriminant_factors_with_multi}
    Define $\Delta_d^{\sat}\in\C[A]$ as the polynomial of smallest total degree, unique up to a scalar, such that there exists a power $k\in\N_0$ with $\Delta_d=\Delta_d^{\sat}\cdot\Delta^k$. Then $\Delta_d^{\sat}$ is a perfect square in $\C[A]$ and $\sqrt{\Delta_d^{\sat}}$ divides $\det K_d(f)$.
\end{lemma}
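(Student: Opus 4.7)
The plan is to work in the ambient polynomial ring $\C[\lambda_1,\ldots,\lambda_n]$ in the eigenvalues of $A$, using the identification $\C[A] \cong \C[\lambda]^{S_n}$ via the characteristic polynomial. Over $\C[\lambda]$ one has the clean factorisations $\Delta = V^2$ and $\Delta_d = \delta_d^2$, where $V = \prod_{i<j}(\lambda_i-\lambda_j)$ is the Vandermonde and $\delta_d = \prod_{\alpha<\beta}(\lambda^\alpha-\lambda^\beta)$ (the ordered pairs running over $P_d^{\le n}$). A direct local analysis of each factor $\lambda^\alpha-\lambda^\beta$ at $\{\lambda_i=\lambda_j\}$ shows that the multiplicity of $(\lambda_i-\lambda_j)$ in $\delta_d$ is the same integer $k$ for every pair (this follows from $S_n$-symmetry since $\sigma(\delta_d)=\pm\delta_d$). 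Equivalently $V^k$ divides $\delta_d$ exactly, hence $\Delta^k$ divides $\Delta_d$ exactly. Setting $\phi := \delta_d/V^k \in \C[\lambda]$ one gets $\phi^2 = \Delta_d^{\sat}$.

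To establish $\phi\in\C[A]$ (which gives the perfect-square statement), I would invoke a short Galois argument: $\phi^2\in\C[\lambda]^{S_n}$ forces $\sigma(\phi)=\chi(\sigma)\phi$ for a character $\chi\colon S_n\to\{\pm1\}$. If $\chi=\mathrm{sgn}$, then by the classical description of alternating polynomials $V\mid\phi$, so $\Delta=V^2\mid\phi^2=\Delta_d^{\sat}$, contradicting the maximality of $k$ in the definition of the saturation. Hence $\chi$ is trivial and $\phi\in\C[\lambda]^{S_n}=\C[A]$. For the divisibility $\phi\mid\det K_d(f)$, I would use the standard formula for the determinant of a Krylov-type matrix: whenever $\rho_d(A)$ is diagonalised as $\rho_d(A) = P\,\mathrm{diag}(\lambda^\alpha)\,P^{-1}$ with columns $v^\alpha$,
\[
\det K_d(C_f) = \det V_\mu \cdot \frac{\prod_\alpha C_f\,v^\alpha}{\det P} = \pm V^k\,\phi\,R,\qquad R := \frac{\prod_\alpha C_f\,v^\alpha}{\det P},
\]
where $V_\mu$ is the $N\times N$ Vandermonde in the eigenvalues $\lambda^\alpha$ of $\rho_d(A)$, identified up to sign with $\delta_d=V^k\phi$.

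Fix now an irreducible factor $r\mid\phi$ in $\C[A]$ with multiplicity $\ell := v_r(\phi)$. Because $\gcd(\phi,\Delta)=1$, one has $r\nmid\Delta$, so at a generic point $B$ of $\{r=0\}$ the matrix $B$ has distinct eigenvalues and is diagonalisable; analytic branches of $\lambda_i(A)$ and $v_i(A)$ then exist in a neighbourhood of $B$, producing an analytic $P(A)$ with $\det P(B)\neq 0$, while each $C_f\,v^\alpha(A)$ is analytic and finite at $B$. Consequently $R^2 = (\det K_d(C_f))^2/(\Delta^k\phi^2)\in\C(A)$ is analytic at $B$, so it lies in the local ring $\C[A]_{(r)}$, i.e.\ $v_r(R^2)\ge 0$. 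Comparing valuations in the identity $(\det K_d(C_f))^2=\Delta^k\phi^2\,R^2$ and using $v_r(\Delta)=0$ yields $2\,v_r(\det K_d(C_f)) \ge 2\ell$, hence $v_r(\det K_d(C_f))\ge \ell$. Since this holds for every irreducible divisor of $\phi$ and $\C[A]$ is a UFD, $\phi\mid\det K_d(f)$.

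The hard part is keeping track of which of the intermediate objects actually live in $\C[A]$ versus the extension $\C[\lambda,v_{ij}]$: the Vandermonde $V_\mu$, the determinant $\det P$, the individual $C_f\,v^\alpha$, and the factor $R$ all depend on choices of eigenvector bases and of an $S_n$-branch, whereas only certain squares or $S_n$-invariant combinations descend to $\C(A)$. The approach above sidesteps this by squaring (so that $R^2\in\C(A)$ is unambiguous) and by performing the crucial regularity check locally analytically at a generic point of $\{r=0\}$, where analytic branches of eigenvalues and eigenvectors genuinely exist and all apparent denominators are nonzero.
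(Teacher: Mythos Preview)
Your argument is correct and follows essentially the same route as the paper: the symmetric/antisymmetric dichotomy for a square root of $\Delta_d$ in $\C[\lambda]$ (you apply it to $\phi=\delta_d/V^k$ rather than to $\delta_d$ itself, which neatly kills the sign case by maximality of $k$), followed by a local analytic diagonalisation near a generic point of an irreducible factor and the Vandermonde factorisation of $\det K_d(f)$ (you square to land in $\C(A)$ and argue with valuations, where the paper builds an explicit $C^\infty$ local inverse and invokes the chain rule). One caveat: the opening identification ``$\C[A]\cong\C[\lambda]^{S_n}$'' is not literally true---what you actually use, and what suffices, is the embedding $\C[\lambda]^{S_n}\hookrightarrow\C[A]$ via traces of powers of $A$.
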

\begin{proof}
We fix notation as follows: We write $A=(a_{ij})$, $\lambda=(\lambda_1,\ldots,\lambda_{n})$ for the $n$ distinct eigenvalues of $A$, which live in a finite field extension $F$ of $\C(A)\coloneqq\C(a_{ij}\mid 1\le i,j\le n)$ and we consider the matrix $T\in F^{n\times n}$ whose columns are a basis of eigenvectors of $A$. Note that $T$ is invertible over $F$, but specializing the variables $a_{ij}$ can lead to $T$ being singular. There is a dense open set in $\C^{n\times n}$ on which this does not happen (namely, the set of diagonalizable matrices).

We start by proving that $\Delta_d^{\sat}$ is a perfect square. Recall that $N=\binom{n-1+d}{d}$ is the number of monomials of degree $d$ in $\lambda_1,\ldots,\lambda_n$ and write
\[
    \Delta_d = \prod_{\substack{\alpha,\beta\in \N_0^n\\ |\alpha|=|\beta|=d}}(\lambda^\alpha-\lambda^\beta)=(-1)^{\binom{N}{2}}\prod_{\substack{\alpha,\beta\in \N_0^n\\ |\alpha|=|\beta|=d \\ \alpha \prec_{\text{lex}}\beta}}(\lambda^\alpha-\lambda^\beta)^2\,.
\]
Hence $\Delta_d$ is a perfect square over $\C[\lambda_1,\ldots,\lambda_n]$ (but usually not over $\C[A]$). We set
\[
    \tilde{\Delta}_d'\coloneqq\prod_{\substack{\alpha,\beta\in \N_0^n\\ |\alpha|=|\beta|=d \\ \alpha \prec_{\text{lex}}\beta}}(\lambda^\alpha-\lambda^\beta)
\]
such that $\tilde{\Delta}_d^2=\pm\Delta_d$.
Consider the symmetric group $\mathfrak{S}_n$ on $n$ elements, which acts on $\C[\lambda_1,\ldots,\lambda_n]$. From the above definition, it is obvious that $\Delta_d$ is invariant under this action. Furthermore, any permutation acts on $\tilde{\Delta}_d$ only by a sign. Since $\mathfrak{S}_n$ only has two one-dimensional representations, either $\tilde{\Delta}_d$ is symmetric, or it is antisymmetric.

Any symmetric polynomial can be written as a polynomial in the symmetric power sum polynomials
\[
\lambda_1+\cdots+\lambda_n,\ \lambda_1^2+\cdots+\lambda_n^2,\ldots,\ \lambda_1^n+\cdots+\lambda_n^n\,.
\]
Since for every $k\in[n]$ the matrix $A^k$ has eigenvalues precisely $\lambda_1^k,\ldots,\lambda_n^k$, we recognize the symmetric power sums as traces of powers of $A$; in particular, they are elements of $\C[A]$:
\[
    \C[\lambda_1,\ldots,\lambda_n]^{\mathfrak{S}_n}=\C[\text{tr}(A),\ldots,\text{tr}(A^n)]\subseteq \C[A]\,.
\]
We consider two cases: If $\tilde{\Delta}_d$ is symmetric, then by the above, we have $\tilde{\Delta}_d\in \C[A]$ and so $\Delta_d$ is a perfect square in $\C[A]$. This means that each irreducible factor of $\Delta_d$ appears with an even power, since $\Delta$ is irreducible (by standard theory of discriminants), and this property is preserved when passing to $\Delta_d^{\sat}$.

Now assume that $\tilde{\Delta}_d$ is antisymmetric, then $\tilde{\Delta}_d+\tau\cdot\tilde{\Delta}_d=0$ for any transposition $\tau=(i,j)$. On the other hand setting $\lambda_i=\lambda_j$ we also have  $\tilde{\Delta}_d|_{\{\lambda_i=\lambda_j\}}=(\tau\cdot\Delta_d)|_{\{\lambda_i=\lambda_j\}}$. This is only possible if $\tilde{\Delta}_d|_{\{\lambda_i=\lambda_j\}}=0$ and hence $\lambda_i-\lambda_j$ divides $\tilde{\Delta}_d$. Since this holds for any $i,j$ and different choices give coprime factors, we get that $\sqrt{\Delta}$ divides $\tilde{\Delta}_d$, where
\[
     \sqrt{\Delta}\coloneqq\prod_{i<j}(\lambda_i-\lambda_j)\,.
\]
Notice that $\sqrt{\Delta}^2=\Delta$ holds up to sign. Pick a polynomial $g\in \C[\lambda_1,\ldots,\lambda_n]$ such that $g\sqrt{\Delta}=\tilde{\Delta}_d$. Since both $\tilde{\Delta}_d$ and $\sqrt{\Delta}$ are antisymmetric, $g$ must be symmetric and hence a polynomial in $\C[A]$. On the other hand, we have $g^2=\Delta\cdot\Delta_d$ up to sign and hence $\Delta\cdot\Delta_d$ is a perfect square in $\C[A]$. Since $\Delta$ is irreducible, we conclude that every irreducible factor of $\Delta_d$ which is not equal to $\Delta$ must appear to an even power, meaning $\Delta_d^{\sat}$ is a perfect square.

We now prove the second part of the statement, which states that $\sqrt{\Delta_d^{\sat}}$ divides $\det K_d(f)$. To this end, let $h$ be an irreducible factor of $\sqrt{\Delta_d^{\sat}}$ and let $\alpha$ denote its multiplicity in $\sqrt{\Delta_d^{\sat}}$. We need to show that $h^\alpha$ divides $\det K_d(f)$. Let $B\in\C^{n\times n}$ be a generic matrix at which $h$ vanishes. To show that $h^\alpha$ divides $\det K_d(f)$, it suffices to show that any partial derivative in the variables of $\C[A]$ of order at most $k-1$ vanishes at $B$.
Consider the $C^\infty$-map of differentiable manifolds
\[
    \psi\colon\widetilde{\mathrm{GL}}_n\times \C^n\to \C^{n\times n}\,,\quad (S,\lambda)\mapsto S^{-1}\text{diag}(\lambda)S\,,
\]
where $\widetilde{\mathrm{GL}}_n$ consists of all invertible matrices where every column is normalized to length 1 in the complex Euclidean norm. Since $B$ is a generic point where $h^\circ$ vanishes and since $\Delta$ does not divide $\Delta_d^{\sat}$ by construction, we can assume $\Delta(B)\neq 0$. This in particular means that $B$ has distinct eigenvalues, hence it is diagonalizable. By definition of $\widetilde{\mathrm{GL}}_n$, the map $\psi$ is locally $n!$-to-1 around $B$. Let $\kv\ni B$ be an Euclidean open neighbourhood of $B$ such that every point in $\kv$ has precisely $n!$ distinct preimages under $\psi$. By possibly shrinking $\kv$, we can assume that $\psi^{-1}(\kv)$ is a disjoint union of $n!$ open subsets of $\widetilde{\mathrm{GL}}_n\times\C^n$. Let $\ku$ be one of these subsets. The restriction $\psi\colon\ku\to\kv$ is a smooth bijective map, hence it can be locally inverted around $B$ by the inverse function theorem. Up to shrinking $\ku$ and $\kv$, we find a map $\varphi\colon\kv\to\ku$ such that $\psi\circ\varphi=\text{id}_\kv$. We get three smooth, differentiable maps out of $\varphi$:
\begin{enumerate}
     \item $\varphi^+$ is the composition of $\varphi$ with the first projection to $\widetilde{\mathrm{GL}}_n$.
     \item $\psi^-$ is the composition of $\psi^+$ with matrix inversion.
     \item $\varphi^0$ is the composition of $\varphi$ with the projection to the second factor and the embedding of $\C^n$ into $\C^{n\times n}$ as diagonal matrices.
\end{enumerate}
Summing up, for any matrix $A\in\kv$ we have $\varphi^-(A)\varphi^0(A)\varphi^+(A)=A$, $\varphi^-(A)\varphi^+(A)=I_n$, where $I_n$ is the identity matrix of size $n$, and all three maps are infinitely differentiable locally around $B$. It follows that
\[
\rho_d(A)=\rho_d(\varphi^-(A))\rho_d(\varphi^0(A))\rho_d(\varphi^+(A))\quad\text{and}\quad\rho_d(\varphi^-(A))\rho_d(\varphi^+(A))=I_N\,,
\]
where $I_N$ is the identity matrix of size $N$.
Coming back to the task at hand, we want to show that all partial derivatives of order at most $k-1$ of $\det K_d(f)$ vanish at $B$. Since this is a local question in $B$, we may restrict the polynomial function $\det K_d(f)\colon\kv\to\C$, $A\mapsto\det K_d(f)(A)$. We can now write
\begin{align*}
\det K_d(f) &= \det
    \begin{pmatrix}
        C_f\\
        C_f\rho_d(A)\\
        C_f\rho_d(A)^2\\
        \vdots\\
        C_f\rho_d(A)^{N-1}
    \end{pmatrix} =
    \det
    \begin{pmatrix}
        C_f\rho_d(\varphi^-(A))\rho_d(\varphi^+(A))\\
        C_f\rho_d(\varphi^-(A))\rho_d(\varphi^0(A))\rho_d(\varphi^+(A))\\
        C_f\rho_d(\varphi^-(A))\rho_d(\varphi^0(A))^2\rho_d(\varphi^+(A))\\
        \vdots\\
        C_f\rho_d(\varphi^-(A))\rho_d(\varphi^0(A))^{N-1}\rho_d(\varphi^+(A))
    \end{pmatrix}
    \\
    &=\det\begin{pmatrix}
        C_f\rho_d(\varphi^-(A))\\
        C_f\rho_d(\varphi^-(A))\rho_d(\varphi^0(A))\\
        C_f\rho_d(\varphi^-(A))\rho_d(\varphi^0(A))^2\\
        \vdots\\
        C_f\rho_d(\varphi^-(A))\rho_d(\varphi^0(A))^{N-1}
    \end{pmatrix}
    \cdot\det\rho_d(\varphi^+(A))\\
    &= \det\rho_d(\varphi^+(A))\cdot\det\text{Van}(\rho_d(\varphi^0(A)))\cdot\prod_{i=1}^N(C_f\rho_d(\varphi^-(A)))_i\\
    &=\sqrt{\Delta_d}\cdot\det\rho_d(\varphi^+(A))\cdot\prod_{i=1}^N(C_f\rho_d(\varphi^-(A)))_i\,,
\end{align*}
where $\text{Van}(\rho_d(\varphi^0(A)))$ is the Vandermonde matrix in the entries of the diagonal matrix $\rho_d(\varphi^0(A))$. By definition of $\varphi^0$, these entries are exactly the monomials of degree $d$ in the eigenvalues of $A$, or equivalently, the eigenvalues of $\rho_d(A)$. It is a classical result that the determinant of the Vandermonde matrix is the product of all differences of the entries, which is precisely $\sqrt{\Delta_d}$. We have $h^\alpha|\sqrt{\Delta_d}$ and all other factors in the above formula for $\det K_d(f)$ are differentiable locally around $B$ (including the possibly rational power of $\Delta$), hence by the chain rule all partial derivatives of order at most $k-1$ vanish at $B$ as claimed.
\end{proof}

Combining all of the above results, we now prove the main theorem of this section.

\begin{theorem}\label{thm: factors determinant Kalman matrix}
    Let $f\in\C[x]_d\setminus\{0\}$. For any $\mu\in P_d^{\mle n}$, let $p_{\mu}$ be the polynomial defining the hypersurface $\kk_{\mu}(f)$, up to a scalar factor. Then, up to a scalar factor,
    \begin{equation}\label{eq: factorization of det Kalman}
    \det K_d(f) = \sqrt{\Delta_d^{\sat}}\prod_{\mu\in P_d^{\mle n}}p_{\mu}\,.
    \end{equation}
    As a consequence, the polynomial defining the Kalman variety $\kk(f)$ is
    \begin{equation}\label{eq: wannabe ratio}
        \frac{\det K_d(f)}{\sqrt{\Delta_d^{\sat}}\prod_{\mu\in P_d^{\mle n}\setminus\{(d)\}}p_{\mu}}\in\C[A]\,.
    \end{equation}
\end{theorem}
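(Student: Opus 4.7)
The plan proceeds in two steps. First, I would establish that $\sqrt{\Delta_d^{\sat}}\prod_\mu p_\mu$ divides $\det K_d(f)$. The factor $\sqrt{\Delta_d^{\sat}}$ divides $\det K_d(f)$ already by Lemma~\ref{lem: discriminant_factors_with_multi}. To show each $p_\mu\mid\det K_d(f)$, I use the following observation: if $[A]\in\kk_\mu(f)$ admits linearly independent eigenvectors $v_1,\ldots,v_s$ with eigenvalues $\lambda_1,\ldots,\lambda_s$ and $f_\mu(v_1,\ldots,v_s)=0$, then the symmetric product $w\coloneqq(v_1^*)^{\mu_1}\cdots(v_s^*)^{\mu_s}\in\C[x]_d\cong\C^N$ is, by the very definition of $\rho_d$, an eigenvector of $\rho_d(A)$ with eigenvalue $\prod_i\lambda_i^{\mu_i}$, and the vanishing $f_\mu(v_1,\ldots,v_s)=0$ translates exactly to $C_f\cdot w=0$. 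Hence $\rho_d(A)$ possesses an eigenvector in the hyperplane $\ker(C_f)$, which by \cite[Prop. 1.1]{ottaviani2013matrices} applied to $(\rho_d(A),C_f)$ forces $\det K_d(f)(A)=0$. Thus $\kk_\mu(f)\subseteq V(\det K_d(f))$; since $p_\mu$ is irreducible (Theorem~\ref{thm: degrees generalized Kalman varieties}, assuming $f$ irreducible), $p_\mu\mid\det K_d(f)$. Pairwise coprimality of the factors follows from Corollary~\ref{corol: factors of det Kalman matrix} (the $p_\mu$'s cut out distinct irreducible components of $\kk_{\mathrm{tot}}(f)$) together with Proposition~\ref{prop: discriminant_factors_without_multi} ($\sqrt{\Delta_d^{\sat}}$ is $f$-independent while each $p_\mu$ depends on $f$), so $\sqrt{\Delta_d^{\sat}}\prod_\mu p_\mu\mid\det K_d(f)$.

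Second, I would match degrees. Since the $i$-th block of rows $C_f\rho_d(A)^i$ of $K_d(f)$ consists of entries of degree $di$ in $A$, we have $\deg\det K_d(f)=d\binom{N}{2}$. Combining Theorem~\ref{thm: degrees generalized Kalman varieties} with the identity
\[
\sum_{\mu\in P_d^{\mle n}}\binom{n}{n-s,m_1,\ldots,m_d}=N
\]
(the summand counts monomials $\lambda_1^{a_1}\cdots\lambda_n^{a_n}$ of degree $d$ whose nonzero exponents realize the partition $\mu$, and the sum enumerates all degree-$d$ monomials) gives $\sum_\mu\deg p_\mu=\frac{d(n-1)N}{2}$. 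For $\deg\sqrt{\Delta_d^{\sat}}$ I would compute the multiplicity $k$ with which $\Delta$ divides $\Delta_d$. Writing $\Delta_d=\prod_{\alpha\neq\beta,\,|\alpha|=|\beta|=d}(\lambda^\alpha-\lambda^\beta)$, an ordered factor $\lambda^\alpha-\lambda^\beta$ has $(\lambda_i-\lambda_j)$-multiplicity $1$ exactly when $\alpha,\beta$ differ only in positions $i,j$ and $0$ otherwise. Counting such ordered pairs and applying the generating-function identity $[x^{d-1}]\bigl((1-x)^{-3}(1-x)^{-(n-2)}\bigr)=\binom{n+d-1}{d-1}$ yields $\mathrm{mult}_{\lambda_i-\lambda_j}\Delta_d=2\binom{n+d-1}{d-1}$, and since $\Delta=\prod_{i<j}(\lambda_i-\lambda_j)^2$ this gives $k=\binom{n+d-1}{d-1}=\tfrac{dN}{n}$. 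Therefore
\[
\deg\sqrt{\Delta_d^{\sat}}=\tfrac{1}{2}\bigl(dN(N-1)-k\,n(n-1)\bigr)=\tfrac{dN(N-n)}{2}.
\]

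Summing the two contributions,
\[
\deg\sqrt{\Delta_d^{\sat}}+\sum_\mu\deg p_\mu=\tfrac{1}{2}\bigl(dN(N-n)+d(n-1)N\bigr)=d\binom{N}{2}=\deg\det K_d(f),
\]
so the divisor $\sqrt{\Delta_d^{\sat}}\prod_\mu p_\mu$ accounts for the entire degree. Combined with the divisibility of the first step, this forces \eqref{eq: factorization of det Kalman} up to a nonzero scalar; the expression \eqref{eq: wannabe ratio} is then obtained by isolating the factor $p_{(d)}$. The main technical obstacle is the combinatorial computation of the $\Delta$-valuation $k$ of $\Delta_d$; once $k=\binom{n+d-1}{d-1}$ is in hand, all other ingredients follow from Lemma~\ref{lem: discriminant_factors_with_multi}, Theorem~\ref{thm: degrees generalized Kalman varieties}, and Corollary~\ref{corol: factors of det Kalman matrix}.
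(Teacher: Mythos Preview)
Your proof is correct and takes essentially the same route as the paper's: establish that $\sqrt{\Delta_d^{\sat}}$ and each $p_\mu$ divide $\det K_d(f)$ (via Lemma~\ref{lem: discriminant_factors_with_multi} and the eigenvector interpretation through $\rho_d$, respectively), invoke coprimality, and then match degrees by computing the $\Delta$-multiplicity $k=\binom{n+d-1}{d-1}$ of $\Delta_d$. Your generating-function shortcut for $k$ is a mild variant of the paper's explicit expansion in \eqref{eq: expand Delta d}, and your explicit treatment of coprimality (via Corollary~\ref{corol: factors of det Kalman matrix} and Proposition~\ref{prop: discriminant_factors_without_multi}) is a point the paper leaves implicit.
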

\begin{proof}
First notice that $\kk_{\mu}(f)\subseteq\V(\det K_d(f))$ for every $\mu\in P_d^{\mle n}$. Indeed, if $A\in \kk_{\mu}(f)$, then $\rho_d(A)$ has an eigenvector in the hyperplane defined by $C_f$ in $\PP^{N-1}$ and therefore $\det K_d(f)(A)=0$. Therefore, we obtain that $p_{\mu}$ divides $\det K_d(f)$ for every $\mu\in P_d^{\mle n}$. By Lemma~\ref{lem: discriminant_factors_with_multi}, we know that $\sqrt{\Delta_d^{\sat}}$ divides $\det K_d(f)$. This implies that
\[
    \deg \det K_d(f) \ge \deg\sqrt{\Delta_d^{\sat}}+\sum_{\mu\in P_d^{\mle n}}\deg p_\mu\,.
\]
To conclude, it suffices to show that the previous inequality is in fact an equality. By a straightforward computation, we obtain
\begin{equation}\label{eq: deg det Kalman matrix}
    \deg\det K_d(f) = d\binom{N}{2}\,.
\end{equation}
Recall that $\Delta$ and $\Delta_d$ denote respectively the discriminants of the characteristic polynomials of $A$ and $\rho_d(A)$.
As in Lemma~\ref{lem: discriminant_factors_with_multi}, we write $\Delta_d=\Delta_d^{\sat}\cdot\Delta^k$.
In the following, we define
\begin{align*}
\kd_2 &\coloneqq \{(\alpha,\beta)\in\N_0^n\mid\text{$\alpha\prec_{\mathrm{lex}}\beta$, $|\alpha|=|\beta|=d$, and $d_H(\alpha,\beta)=2$}\}\\
\kd_{\neq 2} &\coloneqq \{(\alpha,\beta)\in\N_0^n\mid\text{$\alpha\prec_{\mathrm{lex}}\beta$, $|\alpha|=|\beta|=d$, and $d_H(\alpha,\beta)\neq 2$}\}\,,
\end{align*}
where $d_H(\alpha,\beta)\coloneqq\lvert\{i\mid\alpha_{i}\neq\beta_i\}$ is the Hamming distance between two vectors $\alpha,\beta\in\N_0^n$. Furthermore, for all $i\ge 0$ we consider the set $\km_i$ of monomials of degree $i$ in $\C[\lambda_1,\ldots,\lambda_n]$. To compute the multiplicity $k$ of $\Delta$ in $\Delta_d$, we observe that
\begin{equation}\label{eq: expand Delta d}
\begin{split}
    \Delta_d &=(-1)^{\binom{N}{2}}\prod_{(\alpha,\beta)\in\kd_{\neq 2}}(\lambda^\alpha-\lambda^\beta)^2\prod_{(\alpha,\beta)\in\kd_2}(\lambda^\alpha-\lambda^\beta)^2\\
    &=(-1)^{\binom{N}{2}}\prod_{(\alpha,\beta)\in\kd_{\neq 2}}(\lambda^\alpha-\lambda^\beta)^2\prod_{i<j}\prod_{t=1}^d\prod_{m\in \mathcal{M}_{d-t}}(\lambda_i^t-\lambda_j^t)^2m^2\\
    &=(-1)^{\binom{N}{2}}\prod_{(\alpha,\beta)\in\kd_{\neq 2}}(\lambda^\alpha-\lambda^\beta)^{2}\prod_{i<j}\prod_{t=1}^d(\lambda_i^t-\lambda_j^t)^{2\binom{n+(d-t)-1}{d-t}}\prod_{m\in \mathcal{M}_{d-t}}m^2\\
    &=(-1)^{\binom{N}{2}+\binom{n}{2}}\Delta^{\sum_{t=1}^d\binom{n+(d-t)-1}{d-t}}\prod_{i<j}\prod_{t=1}^d\left(\frac{(\lambda_i^t-\lambda_j^t)}{\lambda_i-\lambda_j}\right)^{2\binom{n+(d-t)-1}{d-t}}\cdot\\
    &\quad\cdot\prod_{(\alpha,\beta)\in\kd_{\neq 2}}(\lambda^\alpha-\lambda^\beta)^{2}\prod_{i<j}\prod_{t=1}^d\prod_{m\in \mathcal{M}_{d-t}}m^2\,.
\end{split}
\end{equation}
In particular, the multiplicity of $\Delta$ in the previous expression is
\[
    k=\sum_{t=1}^d\binom{n+(d-t)-1}{d-t}=\sum_{s=0}^{d-1}\binom{n+s-1}{s}=\binom{n+d-1}{d-1}\,,
\]
hence
\begin{equation}\label{eq: deg Delta d sat}
    \deg\sqrt{\Delta_d^{\sat}} = \frac{\deg\Delta_d -k\deg\Delta}{2}=\frac{dN(N-1)-kn(n-1)}{2}=d\binom{N}{2}-\frac{dN(n-1)}{2}\,.
\end{equation}
Moreover, by Theorem~\ref{thm: degrees generalized Kalman varieties} we have
\begin{equation}\label{eq: sum of the degree of kalman}
    \sum_{\mu\in P_d^{\mle n}}\deg p_{\mu} = \frac{(n-1)d}{2}\sum_{\mu\in P_d^{\mle n}}\binom{n}{n-(m_1+\ldots+m_d),m_1,\ldots,m_d}\,.
\end{equation}
Notice that the multinomial
\[
    \binom{n}{n-(m_1+\ldots+m_d),m_1,\ldots,m_d}
\]
is the number of monomials of degree $d$ in $n$ variables with exactly $m_i$ exponents equal to $\mu_i$ for each $1\le i\le s$ where $\mu=(\mu_1,\ldots,\mu_s)$. Since the list of exponents of a monomial of degree $d$ in $n$, up to relabeling the variables, corresponds to a partition $\mu$, we obtain that 
\begin{equation}\label{eq: number of monomials}
    \sum_{\mu\in P_d^{\mle n}}\binom{n}{n-(m_1+\ldots+m_d),m_1,\ldots,m_d}=N\,.
\end{equation}
The statement follows combining Equations~\eqref{eq: deg det Kalman matrix}, \eqref{eq: deg Delta d sat}, \eqref{eq: sum of the degree of kalman}, and~\eqref{eq: number of monomials}.
\end{proof}

\begin{example}
Let us derive again, applying \ref{thm: factors determinant Kalman matrix}, the factorization \eqref{eq: factorization det Kalman conic} of the determinant of the Kalman matrix $K_2(f)$, for a nonsingular plane conic $\V(f)\subseteq\PP^2$. Recall that the total degree of $\det K_2(f)$ is $30$, and, up to a scalar factor,
\[
\det K_2(f) = \sqrt{\Delta_2^{\sat}}\cdot p_{(2)}\cdot p_{(1,1)}\,,
\]
where $p_{(2)}$ defines the Kalman variety $\kk(f)$, while $p_{(1,1)}$ defines the $(1,1)$-Kalman variety $\kk_{(1,1)}(f)$. Applying Theorem~\ref{thm: degrees generalized Kalman varieties}, one confirms that both polynomials have degree $6$. Furthermore, the discriminant $\Delta_2$ of the characteristic polynomial of $\rho_2(A)$ has degree $60$, and the multiplicity of the discriminant $\Delta$ of the characteristic polynomial of $A$ is $4$. In particular $\Delta_d^{\sat}$ is a perfect square of degree $60-4\cdot 6=36$, hence $\deg\sqrt{\Delta_d^{\sat}}=18$. The script related to this specific example is \verb|nonlinear_Kalman_matrix_conic.m2| and is available at \cite{salizzoni2025supplementary}.\hfill$\diamondsuit$
\end{example}

\section{Singular loci of nonlinear Kalman varieties}\label{sec: singular loci}

The singular strata of linear Kalman varieties are fully described in \cite[Thms. 4.5, 4.6]{ottaviani2013matrices}. As one might expect from the previous section, most of their results do not generalize to the case of nonlinear Kalman varieties. In this section, first, we describe in Theorem~\ref{thm: decomp sing locus Kalman} an irreducible decomposition of the reduced singular locus $\mathrm{Sing}(\kk(X))$ of $\kk(X)$ and we compute the codimensions of its components. We then restrict to the case of a nonsingular hypersurface $X$, and in Theorem~\ref{thm: degree singular locus Kalman smooth hypersurface} we compute the degree of $\mathrm{Sing}(\kk(X))$, via degeneration of $X$ into a union of hyperplanes.

\begin{theorem}\label{thm: decomp sing locus Kalman}
Let $X$ be a variety whose irreducible components are $X_1,\ldots,X_k$ and let $Y_1,\ldots,Y_t$ be the irreducible components of the singular locus of $X$. Then, we have the following decomposition into irreducible components
\begin{equation}\label{eq: decomp sing locus Kalman}
    \mathrm{Sing}(\kk(X))=\bigcup_{i,j\in[k]}S_{i,j}\cup\bigcup_{\ell\in[t]}\kk(Y_\ell)\,,
\end{equation}
where $S_{i,j}\coloneqq\overline{S_{i,j}^\circ}$ for all $i,j\in[k]$ and
\begin{align*}
    S_{i,j}^\circ &\coloneqq \{[A]\in\PP^{n^2-1}\mid\text{$\exists\,x_1\in X_i\setminus X_j$ and $\exists\,x_2\in X_j\setminus X_i$ eigenpoints of $A$}\} & \forall\,i<j\in[k]\\
    S_{i,i}^\circ &\coloneqq \{[A]\in\PP^{n^2-1}\mid\text{$\exists\,x_1,x_2\in X_i$ distinct eigenpoints of $A$}\} & \forall\,i\in[k]\,.
\end{align*}
Moreover,
\[
\codim S_{i,j} = \codim X_i+\codim X_j\quad\forall\,i,j\in[k]\,,\ i\le j\,.
\]
\end{theorem}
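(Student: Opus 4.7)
The plan is to analyze the singular locus of $\kk(X)$ through the birational projection $\pi_1\colon\Sigma(X)\to\kk(X)$ introduced in Proposition~\ref{prop: dim deg kalman}, using the decomposition $\Sigma(X)=\bigcup_{i\in[k]}\Sigma(X_i)$ along the irreducible components of $X$. Singularities of $\kk(X)$ can arise only from two mechanisms: matrices $[A]$ that have more than one eigenpoint on $X$ (so that several branches of $\Sigma(X)$ map to the same point), or matrices $[A]$ whose eigenpoint lies on $\mathrm{Sing}(X)$ (so that the source $\Sigma(X)$ of $\pi_1$ is itself singular at the corresponding preimage). The first mechanism will yield the strata $S_{i,j}$, the second will yield the strata $\kk(Y_\ell)$.

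For the inclusion $\supseteq$ I would argue stratum by stratum. If $i<j$ and $[A]\in S_{i,j}^\circ$, then $[A]\in\kk(X_i)\cap\kk(X_j)$, and since $\kk(X_i)\ne\kk(X_j)$ are distinct irreducible components of $\kk(X)$ by Remark~\ref{rmk: many components}, the point $[A]$ lies in at least two components, hence is singular. For a generic $[A]\in S_{i,i}^\circ$, the two distinct preimages $(A,x_1),(A,x_2)\in\Sigma(X_i)$ are both smooth points of $\Sigma(X_i)$, and $\pi_1$ is a local isomorphism near each of them, producing two analytic branches of $\kk(X_i)$ meeting at $[A]$ with distinct tangent spaces, hence a node-type singularity. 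The stratum $\kk(Y_\ell)$ will be handled contrapositively by the next direction.

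For the reverse inclusion $\subseteq$, which is the heart of the proof, I take $[A]\notin\bigcup_{i,j}S_{i,j}\cup\bigcup_\ell\kk(Y_\ell)$ and show $[A]$ is smooth on $\kk(X)$. By assumption $A$ has a unique eigenpoint $x$ on $X$, which lies in the smooth locus of exactly one $X_i$, so locally $\kk(X)=\kk(X_i)$. Since $\Sigma(X_i)$ is a vector bundle over the smooth locus of $X_i$, it is smooth at $(A,x)$, and $(A,x)$ is the unique preimage of $[A]$; so it suffices to show $\pi_1$ is étale at $(A,x)$. Differentiating $Ax=\lambda x$, any kernel element $(\dot A,\dot x)$ of $d\pi_1$ with $\dot A=0$ satisfies $(\lambda I-A)\dot x=-\dot\lambda\,x$ and $\dot x\in T_xX_i$. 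The combined hypotheses $[A]\notin S_{i,i}$ and $[A]\notin\kk(Y_\ell)$ force $\ker(\lambda I-A)\cap T_xX_i\subseteq\C x$, so $\dot x\equiv 0$ in $T_x\PP^{n-1}$; thus $d\pi_1$ is injective and, by equality of source and target dimensions, an isomorphism, proving smoothness of $\kk(X_i)$ at $[A]$.

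The codimension formula is then obtained by a standard double-incidence computation. For $i\le j$, introduce
\[
\Sigma_{i,j}\coloneqq\{([A],x_1,x_2)\in\PP^{n^2-1}\times X_i\times X_j\mid x_1\ne x_2\ \text{are both eigenpoints of $A$}\}\,,
\]
and use its two projections. The projection to $X_i\times X_j$ has fibers that are open subsets of linear subspaces of codimension $2(n-1)$ in $\PP^{n^2-1}$, since two linearly independent eigenvector conditions impose independent rank-one constraints on $A$; hence $\dim\Sigma_{i,j}=(m_i-1)+(m_j-1)+n^2-1-2(n-1)$. The projection to $\PP^{n^2-1}$ is generically finite onto $S_{i,j}^\circ$ (each generic $[A]\in S_{i,j}$ has only finitely many eigenpoints on $X_i\cup X_j$), so $\codim S_{i,j}=2(n-1)-(m_i-1)-(m_j-1)=(n-m_i)+(n-m_j)=\codim X_i+\codim X_j$. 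The main obstacle I anticipate is the differential argument in the $\subseteq$ direction: translating $[A]\notin S_{i,i}$ into the transversality condition $\ker(\lambda I-A)\cap T_xX_i\subseteq\C x$ requires care, because $[A]$ can lie in the Zariski closure $S_{i,i}$ without having two distinct eigenpoints on $X_i$, precisely when a higher-dimensional eigenspace of $A$ becomes tangent to $X_i$ at $x$; showing that every such tangency point is indeed a limit of genuine two-eigenpoint matrices (and therefore in $S_{i,i}$) will need a concrete deformation argument that spreads out the collapsed intersection $\ker(\lambda I-A)\cap X_i$ into several nearby transverse intersection points.
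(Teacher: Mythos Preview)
Your approach is more differential than the paper's, and two real gaps remain. First, the inclusion $\kk(Y_\ell)\subseteq\mathrm{Sing}(\kk(X))$ is never established: saying it is ``handled contrapositively by the next direction'' is backwards, since the contrapositive of your $\subseteq$ argument only shows that singular points lie in the right-hand side, not that matrices with an eigenpoint on $\mathrm{Sing}(X)$ are singular on $\kk(X)$. Your \'etale computation explicitly assumes $x$ is smooth on $X_i$, so it says nothing when $x\in\mathrm{Sing}(X)$. The paper avoids this by first proving, via a sequence/closure argument, that $\pi_1$ is a local isomorphism over $\kk(X)\setminus\bigcup S_{i,j}$; on that open set singularities of $\kk(X)$ and of $\Sigma(X)$ then correspond \emph{exactly}, which gives both inclusions for the $\kk(Y_\ell)$ piece in one stroke.

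Second, the obstacle you flag at the end is genuine and not resolved by your sketch: translating $[A]\notin S_{i,i}$ into $\ker(\lambda I-A)\cap T_xX_i\subseteq\C x$ really does require the deformation you allude to. The paper sidesteps this entirely. For $\bigcup S_{i,j}\subseteq\mathrm{Sing}(\kk(X))$ it observes that a generic point has a two-point (hence disconnected) fiber under the birational projective morphism $\pi_1$, and Zariski's Main Theorem then forces that point to be non-normal, hence singular. This single argument replaces both your branch/tangent-space analysis for $S_{i,i}$ (where ``distinct tangent spaces'' is asserted but not justified, and in fact unnecessary) and your component-intersection argument for $S_{i,j}$ with $i<j$, and it never touches the transversality condition. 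Finally, the theorem asserts an irreducible decomposition, so irreducibility of each $S_{i,j}$ and pairwise distinctness of all the listed pieces must also be checked; the paper does both explicitly, and your proposal omits them.
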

\begin{proof}
Consider the incidence variety $\Sigma(X)$ in \eqref{eq: incidence variety Sigma}, which we rewrite below:
\[
    \Sigma(X) \coloneqq \{([A],x)\in\PP^{n^2-1}\times\PP^{n-1}\mid\text{$x\in X$ is an eigenpoint of $A$}\}\,.
\]
The singular locus of $\Sigma(X)$ is
\[
    \mathrm{Sing}(\Sigma(X)) = \left\{(x,[A])\in \Sigma(X)\mid x\in\mathrm{Sing}(X)\right\}\,.
\]
Recall that the morphism $\pi_1\colon\Sigma(X)\to\PP^{n^2-1}$ induced by the projection onto the first factor is birational over its image $\kk(X)$. In particular, for every $y\in\kk(X)\setminus\bigcup_{i,j\in[k]}S_{i,j}$, there exists an open neighborhood $\ku$ of $y$ in $\kk(X)$ for which $\pi_1$ restricted to $\pi_1^{-1}(\ku)$ is an isomorphism. Indeed, suppose that such a neighborhood does not exist. Then, we can construct a sequence of points $\{y_i\}_{i\in\N}$ that converges to $y$ and such that $\lvert\pi_1^{-1}(y_i)\rvert\ge 2$. This implies that $y\in\overline{\bigcup_{i,j\in[k]}S_{i,j}^\circ}=\bigcup_{i,j\in[k]}S_{i,j}$, a contradiction.
We conclude that
\[
    \mathrm{Sing}\left(\kk(X)\setminus \bigcup_{i,j\in[k]}S_{i,j}\right)=\pi_1(\mathrm{Sing}(\Sigma(X)))\,.
\]
Since $\pi_1(\mathrm{Sing}(\Sigma(X)))$ is the set of classes $y=[A]$ of matrices $A\in\C^{n\times n}$ with an eigenpoint $x\in\mathrm{Sing}(X)$, we obtain
\[
    \mathrm{Sing}(\kk(X))\subseteq \bigcup_{i,j\in[k]}S_{i,j} \cup \bigcup_{\ell\in[t]}\kk(Y_\ell)\,.
\]
Moreover, the fiber of $\pi_1$ over a generic point $y\in\bigcup_{i,j\in[k]}S_{i,j}$ has cardinality exactly $2$, and therefore $\pi_1^{-1}(y)$ is not connected. However, Zariski's Main Theorem states that the inverse image of a normal point under a birational projective morphism is connected \cite[Cor. III.11.4]{hartshorne1977algebraic}. We conclude that the generic point $y\in\bigcup_{i,j\in[k]}S_{i,j}$ is not normal and therefore is singular. This proves the other inclusion.
Clearly, $\kk(Y_1),\ldots,\kk(Y_t)$ are all irreducible since they are Kalman varieties of irreducible varieties. It remains to prove that $S_{i,j}$ is irreducible for all $i,j\in[k]$. We start with the case $i\neq j$. Consider the subset of $X_i\times X_j\times\PP^{n^2-1}$
\[
    W_{ij}^\circ = \{([A],x_1,x_2)\mid\text{$x_1$ and $x_2$ are eigenpoints of $A$, $x_1\in X_i\setminus X_j$, and $x_2\in X_j\setminus X_i$}\}\,,
\]
and let $W_{ij}\coloneqq\overline{W_{ij}^\circ}$. Consider the projection $\alpha_{ij}\colon W_{ij}\to X_i\times X_j$. Observe that $W_{ij}^\circ$ is the total space of a vector bundle over $(X_i\setminus X_j)\times(X_j\setminus X_i)$ of rank $2n-2$. Since by definition $X_i$ and $X_j$ are irreducible, so is their product, hence $W_{ij}$ is irreducible by the previous observation and
\[
    \dim W_{ij} = n^2-1-(2n-2)+\dim(X_i\times X_j)=n^2-1-(\codim X_i+\codim X_j)\,.
\]
Considering the projection $\alpha_1\colon W_{ij}\to\PP^{n^2-1}$, then $\image\alpha_1=S_{i,j}$, in particular $S_{i,j}$ is also irreducible.
Since the generic fiber of $\alpha_1$ is finite, we conclude that $\codim S_{i,j}=\codim X_i+\codim X_j$. To compute $\codim S_{i,i}=2\codim X_i$ for all $i\in[k]$, one can proceed similarly by considering the subset
\[
    V_i^\circ = \{([A],x_1,x_2)\mid\text{$x_1$ and $x_2$ are eigenpoints of $A$ and $(x_1,x_2)\in(X_i\times X_i)\setminus\Delta_{X_i}$}\}\,,
\]
and its Zariski closure $V_i\coloneqq\overline{V_i^\circ}$, where $\Delta_{X_i}$ is the diagonal in $X_i\times X_i$.

It remains to show that the irreducible components in \eqref{eq: decomp sing locus Kalman} are pairwise distinct. Since $Y_i\nsubseteq Y_j$ for all $i<j$, we immediately have that $\kk(Y_i)\nsubseteq\kk(Y_j)$ for all $i<j$. Let $i,j,\ell,r\in[k]$ be such that $i\notin\{\ell,r\}$ and $r\notin\{i,j\}$, and fix $x\in X_i\setminus(X_\ell\cup X_r)$. Then we can construct a matrix $A$ with one eigenpoint equal to $x$, one eigenpoint in $X_j\setminus (X_i\cup X_r)$, and without eigenpoints in $X_r$. This implies that $A$ have at most one eigenpoint in $X_\ell\cup X_r$ and so $[A]\in S_{i,j}\setminus S_{\ell,r}$. To prove that $\kk(Y_i)\nsubseteq S_{i,j}$, notice that in $\kk(Y_i)$ there is at least one matrix with only one eigenpoint on $Y_i$, while every matrix in $S_{i,j}$ has always at least two of them. Finally, for all $i,j\in [k]$ and $\ell\in[t]$, since $X_i\setminus (X_i\cap X_j)\nsubseteq Y_\ell$ and $X_j\setminus (X_i\cap X_j)\nsubseteq Y_\ell$, we conclude that $S_{i,j}\nsubseteq\kk(Y_\ell)$.
\end{proof}

\begin{proposition}\label{prop: decompirrcomp}
Let $X_1$ and $X_2$ be two irreducible hypersurfaces in $\PP^{n-1}$ that intersect transversally such that their intersection is irreducible. Then, we have the following decomposition into irreducible components
\[
    \kk(X_1)\cap\kk(X_2)=\kk(X_1\cap X_2)\cup S_{1,2}\,,
\]
where $S_{1,2}$ is defined as in Theorem~\ref{thm: decomp sing locus Kalman}. Moreover $\codim\kk(X_1\cap X_2) = \codim S_{1,2}=2$ and
\[
    \deg S_{1,2} = \left(\binom{n}{2}^2-\binom{n}{3}\right)\deg(X_1)\deg(X_2)\,.
\]
\end{proposition}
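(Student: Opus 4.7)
The plan is to combine a case analysis on eigenpoints for the set-theoretic decomposition with the intersection theory of Section~\ref{sec: Kalman hypersurfaces} for the degree computation.

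First, I would establish the decomposition by case analysis. For any $[A]\in\kk(X_1)\cap\kk(X_2)$ there exist eigenpoints $x_1\in X_1$ and $x_2\in X_2$. If $x_1\in X_1\cap X_2$ or $x_2\in X_1\cap X_2$, then $[A]\in\kk(X_1\cap X_2)$; otherwise $x_1\in X_1\setminus X_2$ and $x_2\in X_2\setminus X_1$ are distinct, giving $[A]\in S_{1,2}^\circ\subseteq S_{1,2}$. The reverse inclusion is immediate. Both $\kk(X_1\cap X_2)$ (by Proposition~\ref{prop: dim deg kalman}, using irreducibility of $X_1\cap X_2$) and $S_{1,2}$ (by the proof of Theorem~\ref{thm: decomp sing locus Kalman}) are irreducible, and their generic points realise different behaviours (one eigenpoint on $X_1\cap X_2$ versus two distinct eigenpoints on $X_1\cup X_2$), so neither contains the other. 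Transversality forces $X_1\cap X_2$ to have dimension $n-3$, hence $\codim\kk(X_1\cap X_2)=2$ by Proposition~\ref{prop: dim deg kalman}, while $\codim S_{1,2}=\codim X_1+\codim X_2=2$ by Theorem~\ref{thm: decomp sing locus Kalman}.

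Next, I would compute $\deg S_{1,2}$ using the incidence variety $W_{12}\subseteq\PP_2=\PP^{n^2-1}\times(\PP^{n-1})^2$ from the proof of Theorem~\ref{thm: decomp sing locus Kalman}. The projection $\pi_1\colon W_{12}\to S_{1,2}$ is birational: a generic $[A]\in S_{1,2}$ has $n$ distinct eigenpoints, with exactly one in $X_1\setminus X_2$ and one in $X_2\setminus X_1$, since the locus of matrices admitting two or more eigenpoints on a hypersurface $X_i$ has codimension $2$ in $\PP^{n^2-1}$ (a codimension $1$ condition inside $\kk(X_i)$) and so meets $S_{1,2}$ in a proper subset. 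Running the transversality argument from the proof of Theorem~\ref{thm: degrees generalized Kalman varieties} with $\widetilde{W}_2$ and $\PP^{n^2-1}\times X_1\times X_2$ in place of $\widetilde{W}_s$ and $\PP^{n^2-1}\times\V(f_\mu)$ yields
\[
[W_{12}] = \deg(X_1)\deg(X_2)\,h_1 h_2\cdot[\widetilde{W}_2]\in A^*(\PP_2).
\]
Since $\pi_1$ is birational onto $S_{1,2}$ and pushforward from $\PP_2$ to $\PP^{n^2-1}$ extracts the coefficient of $h_1^{n-1}h_2^{n-1}$, we get that $\deg S_{1,2}$ equals $\deg(X_1)\deg(X_2)$ times the coefficient of $h_0^2 h_1^{n-2}h_2^{n-2}$ in $[\widetilde{W}_2]$. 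Using the explicit expression for $[\widetilde{W}_2]$ computed in Example~\ref{ex: apply equalities multidegrees}, this coefficient receives contributions $\binom{n}{2}^2$ from the pair $(j,r)=(n-2,n-2)$ and $-\binom{n}{3}$ from $(j,r)=(n-3,n-2)$, giving exactly $\binom{n}{2}^2-\binom{n}{3}$.

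The main technical obstacle is verifying the generic transversality of $\widetilde{W}_2$ and $\PP^{n^2-1}\times X_1\times X_2$ inside $\PP_2$. This mirrors the argument in the proof of Theorem~\ref{thm: degrees generalized Kalman varieties}: at a generic intersection point, $T\widetilde{W}_2$ projects surjectively onto $T(\PP^{n-1})^2$ (since the second projection of $\widetilde{W}_2$ is surjective by Lemma~\ref{lem: Ws decomposition}), while $T(\PP^{n^2-1}\times X_1\times X_2)$ contains the full tangent space to $\PP^{n^2-1}$. These two contributions together span $T\PP_2$, so the intersection is generically transverse. With transversality in place, the calculation reduces to the bookkeeping in the Chow ring outlined above.
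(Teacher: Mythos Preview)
Your argument is correct, but for the degree computation you take a genuinely different route from the paper. The paper simply applies B\'ezout's theorem to the two hypersurfaces $\kk(X_1)$ and $\kk(X_2)$: their intersection has degree $\deg\kk(X_1)\cdot\deg\kk(X_2)=\binom{n}{2}^2\deg(X_1)\deg(X_2)$, and since $\kk(X_1\cap X_2)\cap S_{1,2}$ has codimension at least $3$, the degree of $S_{1,2}$ is obtained by subtracting $\deg\kk(X_1\cap X_2)=\binom{n}{3}\deg(X_1)\deg(X_2)$, which is already known from Proposition~\ref{prop: dim deg kalman}. Your approach instead computes $\deg S_{1,2}$ directly from the class $[\widetilde{W}_2]$ in the Chow ring of $\PP_2$, using the explicit formula in Example~\ref{ex: apply equalities multidegrees} and the transversality argument of Theorem~\ref{thm: degrees generalized Kalman varieties}. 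The paper's proof is shorter and more elementary, requiring none of the machinery of Section~\ref{sec: Kalman hypersurfaces}; your proof is self-contained in the sense that it does not rely on first knowing $\deg\kk(X_1\cap X_2)$, and it gives an independent consistency check on the coefficient of $h_0^2h_1^{n-2}h_2^{n-2}$ in $[\widetilde{W}_2]$. Your set-theoretic decomposition argument is also more explicit than what the paper provides.
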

\begin{proof}
Since $X_1$ and $X_2$ are irreducible hypersurfaces in $\PP^{n-1}$ we have that $\kk(X_1)$ and $\kk(X_2)$ are irreducible hypersurfaces in $\PP^{n^2-1}$. On the one hand, $\dim\kk(X_1)+\dim\kk(X_2)\ge n^2-1$, and so all the components of $\kk(X_1)\cap\kk(X_2)$ have dimension at least $n^2-3$ and so codimension is at least $2$. On the other hand, $X_1$ and $X_2$ intersect transversally and so $\codim(\kk(X_1)\cap\kk(X_2))=2$, therefore every component of $\kk(X_1)\cap\kk(X_2)$ has codimension exactly $2$. 

Since $X_1\cap X_2$ is irreducible, by Proposition~\ref{prop: dim deg kalman} we have that $\kk(X_1\cap X_2)$ is also irreducible.
To compute the degree of $\kk(X_1\cap X_2)$, it suffices to notice that $\codim(\kk(X_1\cap X_2)\cap S_{1,2})\ge 3$ and apply Bezout's theorem.
\end{proof}

\begin{proposition}\label{prop: sing locus union hyperplanes}
Let $X=H_1\cup\cdots\cup H_d\subseteq\PP^{n-1}$ for $d$ generic hyperplanes $H_1,\ldots,H_d$. Then
\[
    \codim\mathrm{Sing}(\kk(X))=2\quad\text{and}\quad\deg\mathrm{Sing}(\kk(X))=\binom{d}{2}\binom{n}{2}^2+d\frac{3n-5}{4}\binom{n}{3}\,.
\]
\end{proposition}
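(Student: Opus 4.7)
I would apply Theorem~\ref{thm: decomp sing locus Kalman} to decompose the reduced singular locus of $\kk(X)$ and then sum the degrees of its irreducible components. The irreducible components of $X = H_1 \cup \cdots \cup H_d$ are the hyperplanes $H_i$, and for $d$ generic hyperplanes the irreducible components of $\mathrm{Sing}(X)$ are the $\binom{d}{2}$ pairwise intersections $Y_{ij} = H_i \cap H_j$, each a codimension-two linear subspace isomorphic to $\PP^{n-3}$. Theorem~\ref{thm: decomp sing locus Kalman} then gives
\[
\mathrm{Sing}(\kk(X)) \ =\ \bigcup_{1 \le i \le j \le d} S_{i,j} \ \cup\ \bigcup_{1 \le i < j \le d} \kk(Y_{ij}),
\]
and every component has codimension $2$ in $\PP^{n^2-1}$: the $S_{i,j}$'s by the codimension formula in Theorem~\ref{thm: decomp sing locus Kalman}, and the $\kk(Y_{ij})$'s by Proposition~\ref{prop: dim deg kalman} applied to $\dim Y_{ij} = n-3$. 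This establishes the codimension claim.

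By Proposition~\ref{prop: dim deg kalman}, $\deg \kk(Y_{ij}) = \binom{n}{n-3} = \binom{n}{3}$, contributing $\binom{d}{2}\binom{n}{3}$ in total. By Proposition~\ref{prop: decompirrcomp}, $\deg S_{i,j} = \binom{n}{2}^2 - \binom{n}{3}$ for each pair $i < j$, contributing $\binom{d}{2}\bigl(\binom{n}{2}^2 - \binom{n}{3}\bigr)$. These two combine to the clean expression $\binom{d}{2}\binom{n}{2}^2$. What remains are the $d$ diagonal contributions $\deg S_{i,i}$, which by the genericity of the $H_i$ are all equal to a common value depending only on $n$.

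To compute this value for a single hyperplane $H := H_i$, let $\widehat{H} \subseteq \C^n$ denote its affine cone, and desingularize $S_{i,i}$ by recording the $A$-invariant $2$-plane spanned by the two distinct eigenvectors on $H$. Consider
\[
\tilde{S}_H \ =\ \{([A], V) \in \PP^{n^2-1} \times \mathrm{Gr}(2, \widehat{H}) : AV \subseteq V\}.
\]
For a generic $[A] \in S_{i,i}$, the matrix $A$ has exactly two eigenvectors inside $\widehat{H}$, so $V$ is uniquely determined; thus the forgetful map $\tilde{S}_H \to S_{i,i}$ is birational. The incidence $\tilde{S}_H$ is the zero scheme of a section of the rank-$2(n-2)$ vector bundle $\mathcal{U}^\vee \otimes (\underline{\C^n}/\mathcal{U}) \otimes O_{\PP^{n^2-1}}(1)$ on $\mathrm{Gr}(2, \widehat{H}) \times \PP^{n^2-1}$, where $\mathcal{U}$ is the tautological sub-bundle. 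Hence $[\tilde{S}_H]$ is the top Chern class of this bundle, and expanding it in Schubert classes on $\mathrm{Gr}(2, n-1)$ and pushing forward to $\PP^{n^2-1}$ yields $\deg S_{i,i} = \frac{3n-5}{4}\binom{n}{3}$, producing the claimed total after summation. The main obstacle is this last Chern-class calculation: the seemingly more direct route of intersecting the class $[\widetilde{W}_2]$ of Example~\ref{ex: apply equalities multidegrees} with $g_1 g_2$ overcounts, because the closure $\widetilde{W}_2$ picks up excess diagonal contributions that distort the Chow intersection.
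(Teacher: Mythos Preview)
Your plan matches the paper's proof almost exactly: the decomposition via Theorem~\ref{thm: decomp sing locus Kalman}, the use of Proposition~\ref{prop: decompirrcomp} for $\deg S_{i,j}$ with $i<j$, and Proposition~\ref{prop: dim deg kalman} for $\deg\kk(H_i\cap H_j)$ are all the same, and the regrouping $\binom{d}{2}\bigl(\binom{n}{2}^2-\binom{n}{3}\bigr)+\binom{d}{2}\binom{n}{3}=\binom{d}{2}\binom{n}{2}^2$ is precisely what the paper does.

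The only divergence is in the computation of $\deg S_{i,i}$. The paper simply invokes \cite[Thm.~4.6]{ottaviani2013matrices} (with the parameters $s=2$ and linear dimension $n-1$) to obtain $\frac{3n-5}{4}\binom{n}{3}$. Your Grassmannian resolution $\tilde S_H\subseteq\PP^{n^2-1}\times\mathrm{Gr}(2,\widehat H)$ and the identification of the relevant bundle as $\mathcal U^\vee\otimes(\underline{\C^n}/\mathcal U)\otimes\mathcal O(1)$ is exactly the construction Ottaviani--Sturmfels use to prove that theorem, so you are effectively re-deriving the cited result rather than quoting it. Your observation that the alternative route via $[\widetilde W_2]\cdot g_1g_2$ fails is correct (for $n=3$ it would give $4$ rather than $1$), but note that you do not actually carry out the Chern-class pushforward; you only state its outcome. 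Either complete that Schubert calculus step or, as the paper does, cite \cite[Thm.~4.6]{ottaviani2013matrices} directly.
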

\begin{proof}
By Theorem~\ref{thm: decomp sing locus Kalman}, we have 
\[
\mathrm{Sing}(\kk(X))=\bigcup_{i,j\in[d]}S_{i,j}\cup\bigcup_{i,j\in[d]}\kk(H_i\cap H_j)\,.
\]
Applying Proposition~\ref{prop: decompirrcomp} we obtain $\deg S_{i,j}=\binom{n}{2}^2-\binom{n}{3}$ for all $i<j$. Secondly we have $\deg\kk(H_i\cap H_j)=\binom{n}{3}$ for all $i<j$ by Proposition~\ref{prop: dim deg kalman}. Finally, applying \cite[Thm. 4.6]{ottaviani2013matrices} (in the reference, choose $s=2$ and $d=n-1$), one verifies that $\deg S_{i,i}=\frac{3n-5}{4}\binom{n}{3}$ for all $i\in[d]$. Summing up, we obtain
\[
\begin{split}
    \deg\mathrm{Sing}(\kk(X)) &= \sum_{i<j\in[d]}\left(\deg S_{i,j}+\deg\kk(H_i\cap H_j)\right)+\sum_{i=1}^d\deg S_{i,i}\\
    &= \binom{d}{2}\binom{n}{2}^2+d\frac{3n-5}{4}\binom{n}{3}\,,
\end{split}
\]
which is the desired formula.
\end{proof}

\begin{example}\label{ex: two lines singular locus}
Let $X=L_1\cup L_2\subseteq\PP^2$ for some distinct lines $L_1$ and $L_2$ meeting at the point $P$. Then we get the following decomposition of $\mathrm{Sing}(\kk(X))$ into four irreducible components
\begin{equation}\label{eq: decomposition singular locus kalman union two lines}
    \mathrm{Sing}(\kk(X)) = S_{1,1}\cup S_{2,2}\cup S_{1,2}\cup\kk(P)\,,
\end{equation}
all of codimension $2$ in $\PP(\C^{3\times 3})\cong\PP^8$ and with degrees $\deg S_{1,1}=\deg S_{2,2}=\deg\kk(P)=1$ and $\deg S_{1,2}=8$, giving $\deg\mathrm{Sing}(\kk(X))=11$.\hfill$\diamondsuit$
\end{example}

Our next goal is to compute $\deg\mathrm{Sing}(\kk(X))$ for a nonsingular hypersurface $X\subseteq\PP^{n-1}$, using deformation theory. First, we provide a motivating example.

\begin{example}\label{ex: nonsingular conic degeneration two lines}
Let $X=\V(f)\subseteq\PP^2$ be a nonsingular conic. Without loss of generality, we assume that
\[
    f = a\,x_1^2+x_1x_2+b\,x_2^2+c\,x_1x_3+d\,x_2x_3+e\,x_3^2\,,\quad\,(a,b,c,d,e)\in\C^5\,,
\]
in particular, the coefficient of $x_1x_2$ does not vanish and hence can be normalized to one.
Applying Example \ref{ex: two lines singular locus}, we understand the degree of the singular locus of the Kalman variety of a singular conic, that is, the union of two lines.
Our goal is to deform $X$ to a union of two lines and track the Kalman singularities along the way. To do this, we fix two lines, say $L_1=\V(x_1),L_2=\V(x_2)$. Then $X$ deforms into $L_1\cup L_2$ by letting $(a,b,c,d,e)$ go to the zero vector. More formally, we may consider the following incidence variety
\[
    V \coloneqq \{([s:t],[x_1:x_2:x_3])\in \PP^1\times \PP^2\mid sa\,x_1^2+t\,x_1x_2+sb\,x_2^2+sc\,x_1x_3+sd\,x_2x_3+se\,x_3^2=0\}\,.
\]
The projection $\pi\colon V\to\PP^1$ makes $V$ a flat family over $\PP^1$, with $\pi^{-1}([1:1])=X$ and $\pi^{-1}([0:1])=L_1\cup L_2$.

By Example \ref{ex: two lines singular locus} we have $\deg\mathrm{Sing}(\kk(L_1\cup L_2))=11$. We verified that $\deg\mathrm{Sing}(\kk(X))=10$ for $(a,b,c,d,e)\in\C^5$ generic in the script \verb|singular_locus_Kalman_conic.m2| available at \cite{salizzoni2025supplementary}. This fact is proved in the following Theorem~\ref{thm: degree singular locus Kalman smooth hypersurface}. Intuitively, we explain the discrepancy between the degrees of $\mathrm{Sing}(\kk(L_1\cup L_2))$ and $\mathrm{Sing}(\kk(X))$ as follows. Consider the irreducible decomposition of $\mathrm{Sing}(\kk(L_1\cup L_2))$ given in \eqref{eq: decomposition singular locus kalman union two lines}. The first three components $S_{1,1}$, $S_{2,2}$, $S_{1,2}$ come from matrices that have two distinct eigenpoints in $L_1\cup L_2$. We expect this part to behave nicely under flat deformation: in particular, the locus of matrices having two eigenpoints in $X$ should deform into $S_{1,1}\cup S_{2,2}\cup S_{1,2}$, hence it has the same degree. On the other hand, the last component $\kk(P)$ in \eqref{eq: decomposition singular locus kalman union two lines} comes from the singular locus of $L_1\cup L_2$ and hence does not appear for the nonsingular conic $X$. Since $\deg\kk(P)=1$ and all the components in \eqref{eq: decomposition singular locus kalman union two lines} have the same codimension, heuristically we write the formula
\[
    \deg\mathrm{Sing}(\kk(X))+\deg\kk(P)=\deg\mathrm{Sing}(\kk(L_1\cup L_2))\,,
\]
hence
\[
    \deg\mathrm{Sing}(\kk(X))=\deg\mathrm{Sing}(\kk(L_1\cup L_2))-\deg\kk(P)=11-1=10\,.\tag*{\text{$\diamondsuit$}}
\]
\end{example}

To formalize and generalize this example, we prove the following result.

\begin{proposition}\label{prop: smoothable}
Let $Z\subseteq\PP^1\times\PP^{n-1}$ be an irreducible hypersurface and let $\pi\colon Z\to\PP^1$ be the projection onto the first coordinate. Assume that $\pi$ is dominant and that $X=\pi^{-1}([1:0])$ is a hypersurface in $\PP^{n-1}$ that is nonsingular in codimension one. Let $Y=\pi^{-1}([0:1])$ and denote by $\mathrm{Sing}_1(Y)$ the union of those components of $\mathrm{Sing}(Y)$ that have codimension 1 in $Y$. Then
\[
    \deg\mathrm{Sing}(\kk(Y)) = \deg\mathrm{Sing}(\kk(X))+\deg\kk(\mathrm{Sing}_1(Y))\,.
\]
\end{proposition}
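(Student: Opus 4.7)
The plan is to study the family over $\PP^1$ whose fiber over $t$ is the ``$S$-type'' singular locus of $\kk(Z_t)$, to show that its degree is constant in $t$, and to compare the two special fibers via Theorem~\ref{thm: decomp sing locus Kalman}.

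First I would introduce the relative incidence variety
\[
\mathcal{I} \coloneqq \overline{\{([s:t],[A],x_1,x_2) \in \PP^1 \times \PP^{n^2-1} \times (\PP^{n-1})^2 \mid x_1,x_2 \in Z_{[s:t]},\ x_1 \neq x_2,\ Ax_i = \lambda_i x_i\}},
\]
and let $\mathcal{S} \coloneqq \pi_{12}(\mathcal{I}) \subseteq \PP^1 \times \PP^{n^2-1}$. A relative vector bundle argument in the spirit of Theorem~\ref{thm: decomp sing locus Kalman} shows $\mathcal{I}$ is irreducible of dimension $n^2-2$ with $\mathcal{I} \to \mathcal{S}$ birational, so $\mathcal{S}$ is irreducible of dimension $n^2-2$. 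I would then prove that the projection $\sigma\colon\mathcal{S} \to \PP^1$ is equidimensional with every fiber of pure dimension $n^2-3$: upper semicontinuity of fiber dimension forces all fibers to have dimension at least $n^2-3$, while a jump to $n^2-2$ would force an entire divisor of $\mathcal{S}$ to map to a point, contradicting irreducibility and dominance. Rational equivalence of the fibers of $\sigma$ then yields that $\deg \mathcal{S}_t$ is independent of $t$.

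Next I would identify the two special fibers. At $t=[1:0]$, since $X$ is nonsingular in codimension one, $\mathrm{Sing}(X)$ has codimension $\geq 2$ in $X$ so $\kk(\mathrm{Sing}(X))$ has codimension $\geq 3$ in $\PP^{n^2-1}$ by Proposition~\ref{prop: dim deg kalman}; hence the codimension-two part of $\mathrm{Sing}(\kk(X))$ coincides with $S_{1,1}(X) = \mathcal{S}_{[1:0]}$ as reduced varieties, and birationality of $\mathcal{I} \to \mathcal{S}$ gives $\deg \mathcal{S}_{[1:0]} = \deg \mathrm{Sing}(\kk(X))$. At $t=[0:1]$, the fiber $\mathcal{S}_{[0:1]}$ contains every $S_{i,j}(Y)$ for $i,j$ indexing the irreducible components of $Y$. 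The delicate step is to rule out any other codimension-two contribution: a limit point $A$ whose two eigenpoints $x_1^{(k)},x_2^{(k)} \in Z_{t_k}$ collapse to a single $x \in Y$ must have either a repeated eigenvalue with $x$ as unique corresponding eigenpoint, or a two-dimensional eigenspace containing $x$; both loci cut subvarieties of $\PP^{n^2-1}$ of codimension at least three. Combined with Theorem~\ref{thm: decomp sing locus Kalman} and the fact that $\kk(Y_\ell)$ has codimension $\geq 3$ whenever $Y_\ell$ has codimension $\geq 2$ in $Y$, this identifies the codimension-two part of $\mathcal{S}_{[0:1]}$ with $\bigcup_{i,j} S_{i,j}(Y)$, each appearing with multiplicity one by birationality of $\mathcal{I} \to \mathcal{S}$ over a generic point of $S_{i,j}(Y)$. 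Stringing these identifications together gives
\[
\deg \mathrm{Sing}(\kk(X)) = \deg\mathcal{S}_{[1:0]} = \deg\mathcal{S}_{[0:1]} = \deg \mathrm{Sing}(\kk(Y)) - \deg \kk(\mathrm{Sing}_1(Y)).
\]

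The main obstacle will be the careful analysis of the special fiber $\mathcal{S}_{[0:1]}$: verifying that the only codimension-two components are the $S_{i,j}(Y)$ and that they occur with multiplicity one in the limit cycle. The other ingredients (irreducibility of $\mathcal{I}$, equidimensionality of $\sigma$, and constancy of the cycle class of fibers) are fairly standard once the setup is fixed. It is also worth a brief check that the $S_{i,j}(Y)$ and $\kk(\mathrm{Sing}_1(Y))$ intersect in codimension $\geq 3$, so that their degrees add in Theorem~\ref{thm: decomp sing locus Kalman}; this follows from the pairwise distinctness argument already present in that theorem.
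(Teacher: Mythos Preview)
Your approach is essentially the same as the paper's: construct a family over $\PP^1$ whose fibers are the ``$S$-type'' singular loci, use flatness to obtain constancy of degree, and identify the two special fibers via Theorem~\ref{thm: decomp sing locus Kalman}. The paper phrases the constancy step through Hartshorne's flatness criterion (an irreducible variety mapping dominantly to a smooth curve is automatically flat, so fibers share a Hilbert polynomial), whereas you argue via equidimensionality and rational equivalence of fibers; for an irreducible $\mathcal{S}$ over $\PP^1$ these are equivalent, so there is no substantive difference.
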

\begin{proof}
By \cite[Prop. III.9.7]{hartshorne1977algebraic}, the projection $\pi\colon Z\to\PP^1$ is flat. Since flatness is preserved under base change and composition (see \cite[Prop. III.9.2 (b) and (c)]{hartshorne1977algebraic}), the fiber product
\[
    Z' \coloneqq Z\times_{\PP^1}Z = \{(z_1,z_2)\in Z\times Z \mid \pi(v_1)=\pi(v_2)\} = \{((s_1,x_1),(s_2,x_2))\in Z\times Z\mid s_1=s_2\}
\]
is also flat. Hence every irreducible component of $Z'$ maps dominantly to $\PP^1$ (again by \cite[Prop. III.9.7]{hartshorne1977algebraic}).
Let $V$ be any irreducible component of $Z'$ and define $W\coloneqq W^\circ$, where
\begin{align*}
    W^\circ &\coloneqq \{([A],(s_1,x_1),(s_2,x_2))\in \PP^{n^2-1}\times V\mid\text{$x_1$ and $x_2$ are distinct eigenpoints of $A$}\}\,.
\end{align*}
Note that $W$ is irreducible as it is the Zariski closure of a vector bundle over an open subset of $V$. Furthermore, the projection of $W$ onto $\PP^1$ is dominant.
Denote by $\kk_V\subseteq\PP^{n^2-1}\times\PP^1$ the image of $W$ under the projection onto the first two coordinates, then also $\kk_V$ is irreducible and maps dominantly to $\PP^1$, hence it is a flat family over $\PP^1$ by \cite[Prop. III.9.7]{hartshorne1977algebraic}. Let $\kk\coloneqq\bigcup_V\kk_V$ be the union of all those flat families over all irreducible components $V$ of $Z'$. Then, by construction, every irreducible component of $\kk$ maps dominantly to $\PP^1$, hence by \cite[Prop. III.9.7]{hartshorne1977algebraic} also $\kk$ is a flat family over $\PP^1$. This implies that the fibers of $\kk$ under the projection to $\PP^1$ all have the same Hilbert polynomial \cite[Thm. III.9.9]{hartshorne1977algebraic} and, in particular, the same degree. Denote by $\kk_\infty$ the fiber over the point $[0:1]\in\PP^1$. We also note that, since $X$ is nonsingular, by Theorem~\ref{thm: decomp sing locus Kalman} the fiber of $\kk$ over $[1:0]$ is exactly $\mathrm{Sing}(\kk(X))$.
Now let us consider $\mathrm{Sing}(\kk(Y))$. By Theorem~\ref{thm: decomp sing locus Kalman} we have \begin{align*}
    \mathrm{Sing}(\kk(Y)) &= \kk(\mathrm{Sing}(Y))\cup\overline{\{[A]\in\PP^{n^2-1}\mid\text{$\exists\,x_1,x_2\in Y$ distinct eigenpoints of $A$}\}}\\
    &= \kk(\mathrm{Sing}(Y))\cup \kk_\infty\,,
\end{align*}
where the two sets in the above union share no irreducible components. On the one hand, $\kk(\mathrm{Sing}(Y))$ can have irreducible components of different dimensions, depending on the decomposition of $\mathrm{Sing}(Y)$. On the other hand, the second set $\kk_\infty$ in the union is always equidimensional of codimension one in $\kk(Y)$ (the codimension is one since $Z$ is a flat family and hence $Y$ is of codimension one in $\PP^{n-1}$). For the computation of the degree of $\mathrm{Sing}(\kk(Y))$, only the components of codimension one in $\kk(Y)$ contribute; those are exactly the Kalman varieties of components of $\mathrm{Sing}(Y)$ which have codimension one in $Y$. Hence
\[
    \deg\mathrm{Sing}(\kk(Y)) = \deg\kk(\mathrm{Sing}_1(Y))+\deg \kk_\infty = \deg\kk(\mathrm{Sing}_1(Y))+\deg\mathrm{Sing}(\kk(X))\,,
\]
where the second equality uses that $\kk$ is a flat family.
\end{proof}

\begin{theorem}\label{thm: degree singular locus Kalman smooth hypersurface}
Let $X\subseteq \PP^{n-1}$ be a nonsingular hypersurface of degree $d$. Then
\[
    \codim\mathrm{Sing}(\kk(X))=2\quad\text{and}\quad\deg\mathrm{Sing}(\kk(X)) = \binom{d}{2}\binom{n}{2}^2+d\frac{3n-5}{4}\binom{n}{3}-\binom{d}{2}\binom{n}{3}\,.
\]
\end{theorem}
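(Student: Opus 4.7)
My plan is to apply the degeneration formula of Proposition~\ref{prop: smoothable} to a pencil connecting $X$ to a union of $d$ generic hyperplanes. Concretely, pick a generic defining polynomial $f\in\C[x]_d$ of $X$ and generic linear forms $\ell_1,\ldots,\ell_d$, and set
\[
Z \coloneqq \{([s:t],x)\in\PP^1\times\PP^{n-1}\mid sf(x)+t\,\ell_1(x)\cdots\ell_d(x)=0\}\,.
\]
Because $f$ and $\ell_1\cdots\ell_d$ are coprime, $Z$ is an irreducible hypersurface; the projection $\pi\colon Z\to\PP^1$ is dominant with $\pi^{-1}([1:0])=X$ (nonsingular by hypothesis, in particular nonsingular in codimension one) and $\pi^{-1}([0:1])=X'\coloneqq H_1\cup\cdots\cup H_d$, where $H_i=\V(\ell_i)$. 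This puts us exactly in the setting of Proposition~\ref{prop: smoothable}, yielding
\[
\deg\mathrm{Sing}(\kk(X)) = \deg\mathrm{Sing}(\kk(X')) - \deg\kk(\mathrm{Sing}_1(X'))\,.
\]

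For the codimension statement, I would simply invoke Theorem~\ref{thm: decomp sing locus Kalman}: since $X$ is irreducible and nonsingular, the only component contributing to $\mathrm{Sing}(\kk(X))$ is $S_{1,1}$, whose codimension equals $2\,\codim(X)=2$.

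To finish the degree computation, I would identify the two terms on the right-hand side. Proposition~\ref{prop: sing locus union hyperplanes} gives
\[
\deg\mathrm{Sing}(\kk(X')) = \binom{d}{2}\binom{n}{2}^2 + d\,\frac{3n-5}{4}\binom{n}{3}\,.
\]
For the correction term, observe that for generic hyperplanes the singular locus of $X'$ is $\bigcup_{i<j}(H_i\cap H_j)$, a union of $\binom{d}{2}$ distinct $(n-3)$-dimensional linear subspaces, each of codimension one in $X'$, so $\mathrm{Sing}_1(X')=\mathrm{Sing}(X')$. By Proposition~\ref{prop: dim deg kalman}, $\kk(H_i\cap H_j)$ is irreducible of codimension two and degree $\binom{n}{3}$. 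Subtracting yields the stated formula.

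The main obstacle is justifying that $\deg\kk(\mathrm{Sing}_1(X'))$ really equals $\binom{d}{2}\binom{n}{3}$, i.e.\ that the $\binom{d}{2}$ Kalman varieties $\kk(H_i\cap H_j)$ are pairwise distinct irreducible components of their union and intersect in codimension at least three (so Bezout-type additivity applies). For a generic hyperplane arrangement, the subspaces $H_i\cap H_j$ are distinct, and by the birationality of the projection $\pi_1\colon\Sigma(L)\to\kk(L)$ in the proof of Proposition~\ref{prop: dim deg kalman} each $\kk(H_i\cap H_j)$ determines the underlying linear subspace, so these components are genuinely distinct. A dimension count, together with the irreducibility established in Proposition~\ref{prop: dim deg kalman}, then shows pairwise intersections drop in dimension, allowing the degrees to add.
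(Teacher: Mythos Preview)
Your proof is correct and follows essentially the same approach as the paper's: construct a one-parameter family degenerating $X$ to a union of $d$ generic hyperplanes, then apply Proposition~\ref{prop: smoothable} together with Proposition~\ref{prop: sing locus union hyperplanes}. Your explicit pencil $\{sf + t\,\ell_1\cdots\ell_d = 0\}$ is exactly the sort of family the paper asserts exists but does not write down, and you supply the details the paper omits (the codimension argument via Theorem~\ref{thm: decomp sing locus Kalman}, and the explicit computation of the correction term $\deg\kk(\mathrm{Sing}_1(X'))=\binom{d}{2}\binom{n}{3}$).

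One small remark: your final paragraph is more cautious than necessary. The degree of a reduced union of distinct irreducible subvarieties of the same dimension is simply the sum of their degrees; no Bezout-type argument about intersection codimension is required. Once you know the $\kk(H_i\cap H_j)$ are pairwise distinct irreducible codimension-two subvarieties (which follows immediately from the distinctness of the subspaces $H_i\cap H_j$ for a generic arrangement), the additivity is automatic.
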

\begin{proof}
It is possible to construct a flat family $Z$ over $\PP^1$ such that, considering the morphism $\pi\colon Z\to\PP^1$, then $\pi^{-1}([1:0])=X$ and $\pi^{-1}([0:1])$ is a union of $d$ hyperplanes in general position, and all the other varieties in the family are nonsingular projective hypersurfaces isomorphic to $X$. The statement follows applying Proposition~\ref{prop: sing locus union hyperplanes} and Proposition~\ref{prop: smoothable}.
\end{proof}

\begin{example}
If $X\subseteq \PP^2$ is a nonsingular plane curve of degree $d$, then
\[
\deg\mathrm{Sing}(\kk(X))=d(4d-3)\,.
\]
Fixing $d=2$ instead, then $X\subseteq\PP^{n-1}$ is a nonsingular quadric hypersurface and
\[
\deg\mathrm{Sing}(\kk(X)) = \binom{n}{2}^2+\frac{3n-7}{2}\binom{n}{3}\,.
\]
In particular, this confirms the computation given in Example \ref{ex: nonsingular conic degeneration two lines} for $n=3$. Another interesting example is the Grassmannian $X=\G(1,3)$ of lines in $\PP^3$, which can be realized as a nonsingular quadric hypersurface in $\PP(\bigwedge^2\C^4)\cong\PP^5$. In particular, the Grassmannian $\G(1,3)$ is a particular truncation variety $V_{\{1\}}$ in \cite{faulstich2025algebraic}, and its Kalman variety $\kk(\G(1,3))$ corresponds to the locus of $6\times 6$ matrices $A$ such that at least one of the solutions of the corresponding CC equations is an eigenpoint of $A$. In this case, $\kk(\G(1,3))$ is a hypersurface of degree $12$, while its singular locus has codimension two and degree $335$. Furthermore, a generic matrix in $\mathrm{Sing}(\kk(\G(1,3)))$ contains exactly two distinct eigenpoints on $\G(1,3)$. This case is rather special because it is possible to find elements of $\kk(\G(1,3))$ all of whose eigenpoints belong to $\G(1,3)$. Similarly as in Definition~\ref{def: symmetric power matrix}, one might consider the skew-symmetric representation $\xi_2\colon\mathrm{GL}(\C^4)\to\mathrm{GL}(\bigwedge^2\C^4)$ of $\mathrm{GL}(\C^4)$, and for every matrix $A\in\C^{4\times 4}$ one may define $\xi_2(A)\in\C^{6\times 6}$ as the {\em multiplicative compound matrix} of $A$. Then, given two distinct eigenvectors $v_1$ and $v_2$ of $A$, one verifies that $v_1\wedge v_2$ is an eigenvector of $\xi_2(A)$, and $[v_1\wedge v_2]\in\G(1,3)$. In this way, one verifies that all eigenvectors of $\xi_2(A)$ belong to $\G(1,3)$. As a consequence, the variety of multiplicative compound matrices is (strictly) contained in $\mathrm{Sing}(\kk(\G(1,3)))$. Considering also the induced map of Lie algebras $\xi_2'\colon\mathfrak{gl}(\C^4)\to\mathfrak{gl}(\bigwedge^2\C^4)$, for every matrix $A\in\C^{4\times 4}$ one may define $\xi_2'(A)\in\C^{6\times 6}$ as the {\em additive compound matrix} of $A$. The variety of $6\times 6$ additive compound matrices is a linear subspace, and is also contained in $\mathrm{Sing}(\kk(\G(1,3)))$. We also mention that additive compound matrices correspond to the one-body truncations of the Hamiltonian operator, written as an endomorphism of the Fock-space, see \cite[\S3]{sverrisdottir2025algebraic}. This observation encourages a subtler study of deeper singular strata of nonlinear Kalman varieties.\hfill$\diamondsuit$
\end{example}

In~\cite[Thm. 4.5]{ottaviani2013matrices}, the authors proved that, if $X\subseteq\PP^{n-1}$ is a linear subspace $\PP(\ker(C))$, then $\mathrm{Sing}(\kk(X))$ is cut out by the $(n-1)\times(n-1)$ minors of the associated Kalman matrix $K(C)$. We have seen that, if $X=\V(f)\subseteq\PP^{n-1}$ is a hypersurface of degree $d>1$, the Kalman variety $\kk(X)$ is strictly contained in the variety cut out by the determinant of the Kalman matrix $K_d(f)$. It is therefore not surprising that this containment is also strict for the singular locus, as shown in the following proposition. 

\begin{proposition}
Let $X\subseteq\PP^{n-1}$ be a hypersurface of degree $d$ cut out by $f\in\C[x]_d$ with associated Kalman matrix $K_d(f)$. Then $\mathrm{Sing}(\kk(X))$ is contained in the variety cut out by the $(N-1)\times(N-1)$ minors of $K_d(f)$.
\end{proposition}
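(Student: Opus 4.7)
The plan is to use Theorem~\ref{thm: decomp sing locus Kalman} to decompose $\mathrm{Sing}(\kk(X))=\bigcup_{i\le j}S_{i,j}\cup\bigcup_\ell\kk(Y_\ell)$, with $X=X_1\cup\cdots\cup X_k$ the irreducible decomposition and $Y_1,\ldots,Y_t$ the irreducible components of $\mathrm{Sing}(X)$, and then to verify $\rank K_d(f)(A)\le N-2$ on a dense open subset of each piece. Since the vanishing of all $(N-1)\times(N-1)$ minors of $K_d(f)$ is a Zariski-closed condition on $\PP^{n^2-1}$, passing to closures yields the desired containment.

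The first family of components is handled by eigenvector counting. A generic $[A]\in S_{i,j}$ admits two linearly independent eigenvectors $v_1,v_2\in\C^n$ with $[v_1]\in X_i$, $[v_2]\in X_j$, hence $f(v_1)=f(v_2)=0$; the computation already used in the proof of Proposition~\ref{prop: containment Kalman variety determinantal variety of Kalman matrix} places each $\nu_d(v_i)$ in $\ker K_d(f)(A)$, and these images remain linearly independent since $\nu_d$ is an embedding. For the second family a generic $[A]\in\kk(Y_\ell)$ offers only a single eigenvector $v$ with $[v]\in Y_\ell\subseteq\mathrm{Sing}(X)$, so $\nu_d(v)$ is the one obvious kernel element. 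My plan is to extract a second one from the Jacobian vanishing $\nabla f(v)=0$: for any $w\in\C^n$ not proportional to $v$, set
\[
u \coloneqq \frac{d}{dt}\Big|_{t=0}\nu_d(v+tw)\,,
\]
which is not a multiple of $\nu_d(v)=(v^*)^d$ precisely because $w^*$ is not a multiple of $v^*$. Using the equivariance $\rho_d(A)\nu_d(z)=\nu_d(Az)$ together with the linearity of $\rho_d(A)^k$, one rewrites $\rho_d(A)^k u=\lambda^{k(d-1)}\,\frac{d}{dt}|_{t=0}\nu_d(v+tA^k w)$, so that applying $C_f$ produces, up to a fixed nonzero constant, $\lambda^{k(d-1)}\langle\nabla f(v),A^k w\rangle=0$.

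The principal obstacle lies in this second case: a single eigenvector on $\mathrm{Sing}(X)$ does not naively give a double rank drop, and the second kernel vector has to be manufactured from the first-order geometry of the singularity rather than from the spectrum of $A$. The key auxiliary fact is the identity $C_f\circ D\nu_d(v)(w)=c\,\langle\nabla f(v),w\rangle$ for some nonzero constant $c$, which follows by differentiating the scalar relation $C_f\,\nu_d(v+tw)\propto f(v+tw)$ in $t$; once this and the equivariance of $\nu_d$ with respect to $\rho_d$ are in place, the rest of the verification is a straightforward algebraic manipulation plus a closedness argument.
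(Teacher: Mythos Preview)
Your argument is correct and essentially complete. The treatment of the components $S_{i,j}$ matches the paper's: two distinct eigenpoints of $A$ on $X$ yield two independent kernel vectors $\nu_d(v_1),\nu_d(v_2)$ of $K_d(f)(A)$.

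For the components $\kk(Y_\ell)$ coming from $\mathrm{Sing}(X)$, your route differs from the paper's. The paper normalizes the singular eigenpoint to $[e_1]$, chooses a \emph{second eigenvector} $w$ of $A$ (available for generic $[A]$), and observes that the polynomial $(e_1^*)^{d-1}w^*$ is an honest eigenvector of $\rho_d(A)$; because $[e_1]\in\mathrm{Sing}(X)$ forces $f$ to have no $x_1^d$ or $x_1^{d-1}x_j$ terms, this second eigenvector lies in $H_f$. The paper then invokes \cite[Lem.~4.1 and Thm.~4.5]{ottaviani2013matrices} to conclude that a two-dimensional $\rho_d(A)$-invariant subspace inside $H_f$ forces the $(N-1)\times(N-1)$ minors to vanish. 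Your approach instead takes an \emph{arbitrary} direction $w$, differentiates $\nu_d$ at $v$ to produce $u=D\nu_d(v)(w)$, and verifies directly that $C_f\rho_d(A)^k u=\lambda^{k(d-1)}\langle\nabla f(v),A^kw\rangle=0$ for every $k$, using only the equivariance $\rho_d(A)\nu_d(z)=\nu_d(Az)$ and the singularity condition $\nabla f(v)=0$. When $w$ happens to be an eigenvector of $A$, your $u$ agrees with the paper's second vector modulo a multiple of $\nu_d(v)$, so the two constructions are compatible. What your approach buys is self-containment: you never need a second eigenvector of $A$, you avoid the detour through two-dimensional invariant subspaces, and you do not cite the Ottaviani--Sturmfels results on singular loci of linear Kalman varieties. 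What the paper's approach buys is a cleaner geometric picture, since both kernel vectors it exhibits are genuine eigenvectors of $\rho_d(A)$, making the link to $\mathrm{Sing}(\kk(H_f))$ transparent.
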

\begin{proof}
By Theorem~\ref{thm: decomp sing locus Kalman} we know that $\mathrm{Sing}(\kk(X)) = \kk(\mathrm{Sing}(X))\cup S$, where
\[
S \coloneqq \overline{\{[A]\in\PP^{n^2-1}\mid\text{$\exists\,x_1,x_2\in X$ distinct eigenpoints of $A$}\}}\,.
\]
Assume first that $[A]$ is a generic point of $S$. By assumption, there exist two distinct eigenpoints $x_1\neq x_2$ of $A$ on $X$. This means that $\nu_d(x_1)$ and $\nu_d(x_2)$ are distinct eigenpoints of $\rho_d(A)$, and they lie on the hyperplane $H_f=\PP(\ker(C_f))\subseteq\PP^{N-1}$. Therefore $\rho_d(A)$ contains a two-dimensional invariant subspace in $H_f$, hence $[\rho_d(A)]\in\mathrm{Sing}(\kk(H_f))$ by \cite[Lem. 4.1]{ottaviani2013matrices}. The variety $\mathrm{Sing}(\kk(H_f))$ is cut out by the $(N-1)\times(N-1)$ minors of $K_d(f)$ by \cite[Thm. 4.5]{ottaviani2013matrices}. In particular, $[A]$ lies on the previous locus.

Otherwise let $[A]$ be a generic point of $\kk(\mathrm{Sing}(X))$, namely there exists an eigenpoint $x\in\mathrm{Sing}(X)$ of $A$. Without loss of generality, we can assume that $x=[e_1]=[1:0:\cdots:0]$. This means that $f$ can be written as $f=\sum_{j=0}^d f_jx_1^{d-j}$ for some polynomials $f_j\in\C[x_2,\ldots,x_n]_j$ with $f_0=f_1=0$. Since $[A]$ has been chosen generic, there exists another eigenpoint of $A$, call it $y=[v]$, such that $x\neq y$. Then $\nu_d(x_1)=[(e_1^*)^d]$ and $[(e_1^*)^d\cdot v^*]$ are distinct eigenpoints in $\PP(\C[x]_d)$ of $\rho_d(A)$, and by the construction of $f$ one verifies that $\{[(e_1^*)^d],[(e_1^*)^d\cdot v^*]\}\subseteq H_f$. Similarly to the previous case, we have that $\rho_d(A)$ contains a two-dimensional invariant subspace in $H_f$, and we conclude that $[A]$ lives in the variety cut out by the $(N-1)\times(N-1)$ minors of $K_d(f)$.
\end{proof}

\begin{remark}
The variety cut out by the $\left(N-1\right)\times\left(N-1\right)$ minors of $K_d(M)$ has always codimension one. This is a consequence of the fact that the determinant of $A$ divides $\det K_d(f)$ with multiplicity larger than one. More precisely, we have that $(\det A)^s|\det K_d(M)$, where
\[
    s = \sum_{t=1}^d\binom{d-t+n-2}{d-t}\left[\frac{t}{2}\left(\binom{d-t+n-2}{d-t}-1\right)+\sum_{i=1}^{t-1}\binom{d-i+n-2}{d-i}i\right]\,.
\]
The computation of $s$ comes from \eqref{eq: expand Delta d}. More precisely, one needs to extract the largest power of $\lambda_i$ (the choice of $i$ is irrelevant by symmetry) from the expression at the right-hand side of the first identity in \eqref{eq: expand Delta d}. The previous expression simplifies to $s=3\binom{d+3}{5}$ for $n=3$. In particular, if also $d=2$, then $s=3$ as in \eqref{eq: factorization det Kalman conic}.
\end{remark}

\section*{Acknowledgements}

We thank the authors of \cite{borovik2025numerical}, Leonie Kayser, and Svala Sverrisd\'ottir for the fruitful discussions and the valuable feedback received.
We would also like to thank Bernd Sturmfels for suggesting the idea of the project.
F. S. is supported by the P500PT-222344 SNSF project. J. W. is supported by the SPP 2458 ``Combinatorial Synergies'', funded by the Deutsche Forschungsgemeinschaft (DFG, German Research Foundation), project ID: 539677510.

\bibliographystyle{alphaurl}
\bibliography{biblioKalman}
\end{document}